\let\mathcal\mathscr
\numberwithin{equation}{section}
\newtheorem{thm}{Theorem}[section]
\newtheorem{lem}[thm]{Lemma}
\newtheorem{prop}[thm]{Proposition}
\theoremstyle{remark}
\theoremstyle{definition}
\renewcommand{\phi}{\varphi}
\renewcommand{\rho}{\varrho}
\newcommand{\0}{\mathbf{0}}
\newcommand{\PP}{\mathbb{P}}
\newcommand{\FF}{\mathbb{F}}
\newcommand{\ZZ}{\mathbb{Z}}
\newcommand{\NN}{\mathbb{N}}
\newcommand{\QQ}{\mathbb{Q}}
\newcommand{\RR}{\mathbb{R}}
\newcommand{\CC}{\mathbb{C}}
\renewcommand{\leq}{\leqslant}
\renewcommand{\geq}{\geqslant}
\renewcommand{\bar}{\overline}
\newcommand{\h}{\mathbf{h}}
\newcommand{\x}{\mathbf{x}}
\newcommand{\y}{\mathbf{y}}
\renewcommand{\u}{\mathbf{u}}
\renewcommand{\k}{\mathbf{k}}
\newcommand{\ve}{\varepsilon}
\DeclareMathOperator{\rank}{rank}
\DeclareMathOperator{\disc}{disc}
\DeclareMathOperator{\sing}{Sing}
\begin{document}

\title[Rational points on hyperelliptic fibrations]{Uniform bounds for rational points on hyperelliptic fibrations}

\author{Dante Bonolis}
\author{Tim Browning}

\address{IST Austria\\
Am Campus 1\\
3400 Klosterneuburg\\
Austria}
\email{dante.bonolis@ist.ac.at}
\email{tdb@ist.ac.at}

\subjclass[2010]{11D45 (11L40, 11N36, 14G05)}

\begin{abstract}
We apply a variant of the square-sieve to produce an upper bound for the number of rational points of bounded height on a family of surfaces that admit a fibration over $\PP^1$  whose general fibre is  a hyperelliptic curve. 
The implied constant does not depend on the coefficients of the polynomial defining the surface.  
\end{abstract}

\date{\today}

\maketitle

\thispagestyle{empty}
\setcounter{tocdepth}{1}
\tableofcontents

\section{Introduction}

This paper is concerned with  the density of rational points on 
surfaces $S$ of the shape
\begin{equation}\label{eq:tiramisu}
Y^{2}=X^{n}+Xf(U_{1},U_{2})+g(U_{1},U_{2}),
\end{equation}
for appropriate  binary forms $f,g\in \ZZ[U_1,U_2]$, such that  $\deg(f)=2n-2$ and $\deg(g)=2n$.
We shall view $S$ as a degree $2n$ surface  in the   weighted projective space
$\PP(n,2,1,1)$, with variables $(Y,X,U_1,U_2)$. The goal of this paper is to study the  counting function 
\begin{equation}\label{eq:NB}
N(S;B)=\#\left\{(x,y,u_{1},u_{2})\in \ZZ^4: 
\begin{array}{l}
y^{2}=x^{n}+xf(u_{1},u_{2})+g(u_{1},u_{2})\\
|x|\leq B^{2},~|y|\leq B^{n},~ |u_{1}|,|u_{2}|\leq B
\end{array}
\right\},
\end{equation}
which can be interpreted in terms of counting rational points of bounded height in
$S(\QQ)$ with respect to the standard exponential height on
$\PP(n,2,1,1)(\QQ)$. Assuming that $S$ is smooth, 
the surface $S$  admits a fibration  
$S\to \PP^1$  whose general fibre is a hyperelliptic curve of  
genus $\lceil n/2-1\rceil$. 
 The following is our main result.

\begin{thm}
Let $S\subset \PP(n,2,1,1)$ be a  smooth surface given by the equation 
\eqref{eq:tiramisu}. 
Assume, furthermore, that  $n\geq 3$ is  odd and that $g$ is separable. Then 
\[
N(S;B)\ll_n B^{3-1/20}(\log B)^2,
\]
where the implied constant is only allowed to depend on $n$.
\label{thm : dp1}
\end{thm}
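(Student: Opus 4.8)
The plan is to write $F(x;u_1,u_2)=x^{n}+xf(u_1,u_2)+g(u_1,u_2)$, so that $N(S;B)$ counts the triples $(x,u_1,u_2)$ in the box $\mathcal B=\{\,|x|\le B^{2},\ |u_1|,|u_2|\le B\,\}$ for which $F(x;u_1,u_2)$ is a square (each contributing at most two values of $y$), and to apply a variant of Heath--Brown's square sieve to the multiset $\{F(x;u_1,u_2):(x,u_1,u_2)\in\mathcal B\}$. First I would strip off three contributions that the sieve cannot see and that must be treated directly: (i) the solutions with $y=0$, i.e.\ $F=0$, which lie on a curve and number $O_{n}(B^{2})$; (ii) the solutions with $\gcd(u_1,u_2)>1$, dispatched by extracting $d=\gcd(u_1,u_2)$, using the weighted homogeneity of \eqref{eq:tiramisu}, and summing in $d$; and, crucially, (iii) the \emph{singular fibres}, i.e.\ the $(u_1,u_2)$ with $\Delta(u_1,u_2):=\disc_{x}F(x;u_1,u_2)=0$. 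Smoothness of $S$ together with separability of $g$ forces $\Delta$ to be a nonzero binary form of degree $O_{n}(1)$, so there are only $O_{n}(B)$ such $(u_1,u_2)$ in $\mathcal B$; for each of them $F(x;\cdot)$ has a repeated root, whence $y$ factors as a polynomial in $x$ times $\sqrt{G(x)}$ for some $G$ of smaller degree, and the number of $x$ with $|x|\le B^{2}$ on $w^{2}=G(x)$ is $O_{n}(B^{1+o(1)})$ by an elementary (or determinant‑method) argument; altogether these fibres give $O_{n}(B^{2+o(1)})$.

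For the main term let $\mathcal P$ be the primes in $(Q,2Q]$, so $\#\mathcal P\asymp Q/\log Q$, with $Q$ a parameter. The square sieve gives
\[
N(S;B)\ll_{n} B^{2+o(1)}+\frac{B^{4}\log B}{Q}+\frac{(\log Q)^{2}}{Q^{2}}\sum_{\substack{p,q\in\mathcal P\\ p\neq q}}\bigl|W(p,q)\bigr|,\qquad W(p,q):=\sum_{(x,u_1,u_2)\in\mathcal B}\Bigl(\tfrac{F(x;u_1,u_2)}{pq}\Bigr),
\]
with $(\tfrac{\cdot}{pq})$ the Jacobi symbol; the first two terms are admissible as soon as $Q$ exceeds $B^{21/20}$, so everything rests on the average of the $W(p,q)$. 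Here I would complete the sum in the variable $x$, whose range $B^{2}$ has the same order as the modulus $r=pq$, expressing $W(p,q)$ via the complete sums $\widehat S(h;u_1,u_2)=\sum_{t\bmod r}(\tfrac{F(t;u_1,u_2)}{r})\e(-ht/r)$ weighted by Fourier coefficients of the $x$‑interval. The key structural point is that $F(x;u_1,u_2)$ is, as a polynomial in $x$, monic of degree exactly $n$ for \emph{every} $(u_1,u_2)$; Weil's bound therefore gives $|\widehat S(h;u_1,u_2)|\ll_{n}r^{1/2}$ with implied constant independent of $(u_1,u_2)$, of $h$, and of the coefficients of $f,g$, except when $p$ or $q$ divides $\Delta(u_1,u_2)$ — and those $(u_1,u_2)$ lie on $O_{n}(1)$ lines (hence $O_{n}(B)$ of them in $\mathcal B$) and are handled by weaker trivial or half‑trivial bounds. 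This much yields $W(p,q)\ll_{n}B^{4}/Q+B^{2}Q\log B$, which on summation only reproduces $N(S;B)\ll_{n}B^{3}(\log B)^{O(1)}$: the obstruction is the term $B^{2}Q$, coming from trivially summing $|\widehat S(h;u_1,u_2)|$ over the $(u_1,u_2)$‑box.

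To break this barrier one needs genuine cancellation in the $(u_1,u_2)$‑summation of $\widehat S(h;u_1,u_2)$. I would extract it by linearising the Jacobi symbol with a Gauss sum, $(\tfrac{F(t;u)}{r})=\tau(r)^{-1}\sum_{z\bmod r}(\tfrac{z}{r})\e(zF(t;u)/r)$, which turns the inner $(u_1,u_2)$‑sum into $\sum_{|u_1|,|u_2|\le B}\e\!\bigl(z(g(u_1,u_2)+tf(u_1,u_2))/r\bigr)$ — a two‑dimensional exponential sum over the binary form $g+tf$ of degree $2n\ge 6$, whose modulus $r=pq$ is squarefree with every nontrivial divisor of size $\ge Q\gg B$, and whose leading form is non‑degenerate because $g$ is separable, so that a Weyl/van der Corput estimate saves a power $B^{-\delta_{n}}$ for a fixed $\delta_{n}>0$. (Equivalently one can invoke Deligne‑type stratification bounds for the associated two‑ and three‑variable complete character sums, using nonsingularity of $w^{2}=F(x;u_1,u_2)$.) Controlling the remaining $t$‑ and $z$‑sums by Weil's bound and an $\ell^{2}$/large‑sieve argument then replaces the wasteful $B^{2}$ above by $B^{2-\eta}$ for a fixed $\eta=\eta(n)>0$, so $W(p,q)\ll_{n}B^{4}/Q+B^{2-\eta}Q\log B$; balancing against $B^{4}(\log B)/Q$ forces $Q\asymp B^{1+\eta/2}$ and gives $N(S;B)\ll_{n}B^{3-\eta/2}(\log B)^{2}$, with $\eta=\tfrac{1}{10}$ being what survives the losses in the worst case $n=3$.

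The principal difficulty, and what pins down the exponent, is precisely this recovery of cancellation in the $(u_1,u_2)$‑aspect: the square sieve wants a modulus $pq\gg B^{2}$, which is \emph{larger} than the square of the $(u_1,u_2)$‑range, so naively completing these variables gains nothing, and one is thrown back on a Weyl‑type power saving of small exponent $\delta_{n}$ for the binary‑form phases; the tension between the smallness of $\delta_{n}$ and the $Q^{-1}$ in the main term is what limits the saving to $B^{-1/20}$ rather than the square‑root‑barrier $B^{-1/2}$. A pervasive secondary burden is that no estimate may depend on the coefficients of $f$ or $g$, which forces the bad locus $\Delta=0$, together with the loci where the Weyl estimates degenerate (again cut out by $\disc$‑type binary forms), to be carried through the whole argument and disposed of separately by the elementary bounds above.
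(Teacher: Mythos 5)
Your overall skeleton — square sieve, complete the $x$-sum, Weil bound for the one-variable Legendre sum with $F(x;u_1,u_2)$ monic of degree $n$, then try to extract additional cancellation in the $(u_1,u_2)$ aspect — starts in the same direction as the paper, but there is a genuine gap at the step that carries all the weight, and the gap stems from a structural choice made earlier.

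You run the classical square sieve with a single set of primes in $(Q,2Q]$, so the modulus $r=pq$ has both prime factors of size $\asymp Q>B$. In that regime the sum you propose to save on,
\[
\sum_{|u_1|,|u_2|\le B} e\!\left(\frac{z\bigl(g(u_1,u_2)+t f(u_1,u_2)\bigr)}{r}\right),
\]
is an incomplete sum of range $B$ modulo $r\gg B^{2}$. There is no ``Weyl/van der Corput estimate'' or Deligne-type stratification bound that produces a power saving here. Classical Weyl differencing is vacuous when the modulus exceeds the square of the range: after one differencing step the shifted coefficients $a_i h/r$ with $|h|\le B$ are still essentially arbitrary residues mod $r$, and nothing has been gained. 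The $q$-analogue of van der Corput requires the modulus to factor as $r_0 r_1$ and shifts by multiples of $r_1$ inside the box; but in your setup both factors $p,q$ exceed $B$, so the smallest available nontrivial shift $q$ already exceeds the box side — the method cannot even begin. Completing $(u_1,u_2)$ to apply a Deligne bound for the complete three-variable sum is equally hopeless: you would pay $(r/B)^{2}\gg B^{2}$ in Fourier weights, wiping out any square-root cancellation. Your $\eta=1/10$ (and hence the exponent $3-1/20$) therefore rests on an estimate that is simply not available.

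The paper sidesteps exactly this obstruction by using Pierce's two-parameter variant of the square sieve: it sieves by products $pq$ with $p\in[P,2P]$, $q\in[Q,2Q]$, and — crucially — imposes $Q\le\sqrt{B}\le P$. In the dominant term the modulus is $r=pp'qq'=r_0 r_1$ with $r_1=qq'\le 4B$ and $r_0=pp'\ge B$; the small factor $r_1$ is then used as the shift modulus in $q$-van der Corput differencing over $(u_1,u_2)$ (Lemma~\ref{lem:bianco2}). After differencing, the object to estimate is a genuinely \emph{complete} four-variable exponential sum $W_p(\lambda,\h,\boldsymbol\mu)$ over $\FF_p$ (equation~\eqref{eq:Wp}), and all the hard analytic content of the paper goes into proving $W_p\ll p^{5/2}$ for generic parameters — via Hooley's method of moments (Theorem~\ref{thm:hooley}), the Hooley--Katz point-count estimate for complete intersections, and a careful analysis of the singular locus (Propositions~\ref{prop:tau} and~\ref{prop : dim1}, Lemma~\ref{lem : polf}). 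None of this has a counterpart in your sketch. Finally, two preliminary matters you omit are also load-bearing: the coefficient-height reduction (Lemma~\ref{lem:large}, in the style of Heath-Brown's determinant argument) is needed to make the eventual estimate uniform in $f,g$ and to control $\omega(\Delta_{f,g})$; and the hypothesis that $n$ is odd is exactly what makes your invoked Weil bound for $\hat S(h;u_1,u_2)$ hold uniformly in $(u_1,u_2)$ and in $h$ (Lemma~\ref{lem : riem}) — for even $n$ the sum is $p$ when $a=b=c=0$, so the uniformity you rely on would fail.
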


Consideration of the partial derivatives of the polynomial in \eqref{eq:tiramisu} shows that $S$  
is smooth precisely when there are no  roots of the  equation 
$$n^n g^{n-1}=(n-1)^{n-1} (-f)^n,$$ 
for which  $ \nabla f $ and $  \nabla g$ are proportional. 
In our work it is also necessary to assume that $g$ is separable, which we  note is equivalent to the smoothness of $S$ when $f$ is  identically zero.
The restriction on the parity of $n$ is an artefact of the proof and can be traced  to a certain exponential sum estimate
(Lemma~\ref{lem : riem}), which can fail when $n$ is even.

One way to  approach $N(S;B)$ is through work of Bombieri and Pila \cite{BP}. For any $a,b\in \ZZ$ and any choice of  $\ve>0$, 
this yields
\begin{equation}\label{eq:BP}
\#\{(x,y)\in (\ZZ\cap [-R,R])^2: y^2=x^n+a x +b\}=O_{\ve,n}(R^{1/n+\ve}),
\end{equation}
for any $R\geq 1$, 
where the implied constant is only allowed to depend on the degree $n$ and the choice of $\ve$.
An application of this with $R=B^n$ leads to the conclusion that 
\begin{equation}\label{eq:himmel}
N(S;B)=O_{\ve,n}(B^{3+\ve}).
\end{equation}
Thus our main result saves $1/20$ over this approach. 

Theorem \ref{thm : dp1} appears to be new for $n\geq 5$, but a sharper exponent is available  when $n=3$ by using better uniform bounds for counting  integer points  on elliptic curves.
Under a suitable hypothesis on the rank growth of  elliptic curves, as explained 
by Heath-Brown \cite{hb}, it is possible to conclude that the number of $x,y$ contributing to $N(S;B)$ is $O_\ve(B^\ve)$,  for any $\ve>0$, with an implied constant that only depends on $\ve$. In this way one obtains a conditional upper bound
$$
N(S;B)=O_{\ve,S}(B^{2+\ve}),
$$
when $n=3$. While we we don't yet have access to the desired conjecture on rank growth, it has recently 
been shown by Bhargava et al \cite[Thm.~1.2]{287} that 
there exists an absolute constant $c>0$ such that 
$$
\rank(E)\leq (0.2785)\log_2(|\disc(E)|)+c,
$$
for any elliptic curve $E$ in Weierstrass form with integral coefficients.
For fixed integers $|u_1|,|u_2|\leq B$,  the  elliptic curve one gets in \eqref{eq:tiramisu} has discriminant $O(B^{12})$. Once inserted into a bound of Helfgott and Venkatesh \cite[Cor.~3.9]{helf} for the number of integer points in a box that lie on an elliptic curve of given rank, this yields the estimate
$$
N(S;B)=O_S(B^{2.87}),
$$
when $n=3$. 
This is sharper than Theorem  \ref{thm : dp1} but has the defect that it depends on the coefficients of $S$,
whereas our result  is uniform in the coefficients of $f$ and $g$.
Prior to this, Mendes Da Costa \cite[\S 8]{costa} enacted a similar strategy to achieve the 
estimate
$N(S;B)=O(B^{3-\delta})$
for an  unspecified $\delta>0$, but with an absolute implied constant.
As discussed by Helfgott \cite[\S 3]{helf'},  it seems difficult to extend this strategy to any instance of the surface \eqref{eq:tiramisu} with $n>3$, since the required uniformity in the coefficients of the hyperelliptic curve is harder to come by.

The upper bound in Theorem  \ref{thm : dp1} is expected to be very far from the truth. 
We always have a lower bound $N(S;B)\gg B$ coming from solutions with $u_1=u_2=0$. 
When $n=3$ the surface $S$ is  a smooth del Pezzo surface of degree $1$ over $\QQ$ and 
Manin's conjecture
\cite{f-m-t} predicts an  upper bound of the form 
$$N(S;B)=O_S(B^2).$$
This is best possible when one of  
the 
exceptional curves that lie on $S$, 
of which there are 240 \cite[Chap.~IV]{manin}, 
 is defined over $\QQ$. However, we  actually expect linear growth outside the set of such  
 curves, by the Manin conjecture \cite{f-m-t}.

\medskip
Our proof of Theorem \ref{thm : dp1}  relies on a variant of the square sieve worked out by Pierce \cite{Pie06}, 
which allows for an application of Heath-Brown's $q$-analogue of van der Corput differencing. 
This approach was already put to use  by Heath-Brown and Pierce 
\cite{HP12} to study cyclic covers of $\PP^n$ and 
our proof  is inspired by their work. 
Ultimately, for  suitable primes $p$, 
the proof of Theorem \ref{thm : dp1} is reduced to estimating a certain $4$-variable exponential sum 
$W_p=W_p(\lambda,\h,\boldsymbol{\mu})$ defined over $\FF_p$. This sum is found in \eqref{eq:Wp}. It is fairly easy to get some cancellation in the sum, getting 
$W_p=O(p^3)$. In order to improve \eqref{eq:himmel} it is critical to get further cancellation, for 
generic  choices of parameters $\lambda,\h,\boldsymbol{\mu}$. 
While the sum is amenable to an application of work by  Katz \cite{Kat99} on singular exponential sums, this doesn't appear to yield any direct improvement. Instead, by adopting a method of moments expounded by Hooley \cite{hooleysum}, 
which we describe in the appendix, 
we can show that $W_p=O(p^{5/2})$ if 
$\lambda\neq 0$ and $(\h,\boldsymbol{\mu})\neq (\0,\0)$. It would be very interesting to gauge whether 
the sum $W_p$ actually satisfies square-root cancellation, 
for we  would then arrive at a version of Theorem \ref{thm : dp1} in which $1/20$ is replaced by $1/8$, which would be the limit of our approach.

\subsection*{Acknowledgements} 
The authors are very grateful to Roger Heath-Brown for suggesting 
the use of the  $q$-analogue of van der Corput differencing, and to Harald Helfgott, Emmanuel Kowalski, Pierre Le Boudec and Per Salberger for interesting remarks.  A great debt of thanks is also due to the 
anonymous referees, for many helpful remarks.  
While working on this paper the second  author was
supported by EPRSC 
grant \texttt{EP/P026710/1} and
FWF grant P 32428-N35.

\section{The square sieve}

\subsection{Reducing the height of the coefficients}

The implied constant in Theorem  \ref{thm : dp1} does not depend on the coefficients of $f$ or $g$. 
In fact we shall follow the convention that all of the implied constants in the remainder of our paper are only allowed to depend  on $n$, unless explicitly indicated otherwise with an appropriate subscript. 
One important step in achieving uniformity arises 
through an application of the following result, in which $\|h\|$ is used to mean the maximum of the absolute values of the coefficients of a form $h\in \ZZ[U_1,U_2]$.

\begin{lem}\label{lem:large}
Let $S\subset\mathbb{P}(n,2,1,1)$ be given by 
\eqref{eq:tiramisu}.
Then either  
$$\max\{\|f\|,\|g\|\}\ll B^{8n^{2}+4n}$$ 
or $N(S;B)=O_\ve( B^{2+\varepsilon})$ for any $\varepsilon >0$.
\end{lem}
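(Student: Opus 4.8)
The plan is to exploit the fact that the polynomial on the right-hand side of \eqref{eq:tiramisu}, viewed as a polynomial in $X$ with coefficients that are forms in $U_1,U_2$, has unusually large coefficients in the regime we wish to rule out. Concretely, fix a pair $(u_1,u_2)\in\ZZ^2$ with $|u_1|,|u_2|\leq B$. For such a point to contribute to $N(S;B)$ we need $y^2=x^n+xf(u_1,u_2)+g(u_1,u_2)$ with $|x|\le B^2$ and $|y|\le B^n$, i.e.\ an integral point of bounded size on the elliptic/hyperelliptic curve $C_{u_1,u_2}\colon y^2=x^n+ax+b$ where $a=f(u_1,u_2)$ and $b=g(u_1,u_2)$. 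The estimate \eqref{eq:BP} of Bombieri--Pila bounds the number of such points by $O_{\ve,n}(R^{1/n+\ve})$ with $R=B^n$, giving $O_{\ve,n}(B^{1+\ve})$ per fibre and hence $O_{\ve,n}(B^{3+\ve})$ in total, which is too weak. The gain comes from observing that when $\max\{\|f\|,\|g\|\}$ is very large, \emph{most} fibres carry essentially no points, because the curve $C_{u_1,u_2}$ then has no integral points in the relevant box at all.

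First I would make this quantitative. If $|b|=|g(u_1,u_2)|$ exceeds roughly $2B^{2n}$ (say), then for $|x|\le B^2$ we have $|x^n+xf(u_1,u_2)|\le B^{2n}+B^2\|f\|$, and if additionally $\|f\|$ is not too large relative to $\|g\|$ this is dominated by $|b|$, forcing $|y^2|=|x^n+xf+g|$ to be of size $\gg \|g\|$; for this to be $\le B^{2n}$ we need $\|g\|\ll B^{2n}$. So the only way to have a contributing point with $|g(u_1,u_2)|$ large is if the cancellation $x^n+xf(u_1,u_2)\approx -g(u_1,u_2)$ occurs, and since $\deg_X(x^n+xf+g)=n$ this pins $x$ to lie in $O(n)$ short intervals determined by the real roots, of total length controlled by the size of the leading-to-lower coefficient ratios. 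The upshot is that a contributing fibre must either have $|f(u_1,u_2)|$ and $|g(u_1,u_2)|$ both $\ll B^{2n}$, or else $x$ is constrained to a very thin set. Counting lattice points $(u_1,u_2)$ in $[-B,B]^2$ on which the binary form $g$ (of degree $2n$) takes a value of size $\ll B^{2n}$: writing $\|g\|=M$, the level set $|g(u_1,u_2)|\le B^{2n}$ inside the box $[-B,B]^2$ has measure $O(B^2 (B^{2n}/M)^{1/(2n)})=O(B^3 M^{-1/(2n)})$ by a standard sublevel-set estimate for a form with bounded degree (after normalising), plus a lower-order lattice-point correction term $O(B)$; so the number of such $(u_1,u_2)$ is $O(B^3 M^{-1/(2n)}+B)$.

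Combining, $N(S;B)\ll B^{1+\ve}\cdot\big(B^3 M^{-1/(2n)}+B\big)$ plus the contribution of fibres where $x$ is confined to thin intervals, which one handles similarly using \eqref{eq:BP} on each short interval. When $M=\max\{\|f\|,\|g\|\}> B^{8n^2+4n}$ we get $B^3 M^{-1/(2n)}< B^{3-(4n+2)}=B^{1-4n}$, which is $\le 1$ for $n\ge1$, so the first term contributes $O(B^{1+\ve})$ and the surviving contribution is $O(B^{2+\ve})$, as required. Thus either $M\ll B^{8n^2+4n}$, or $N(S;B)=O_\ve(B^{2+\ve})$. The main obstacle is making the sublevel-set / lattice-point count for the binary form $g$ on $[-B,B]^2$ fully uniform in the coefficients of $g$: one must rule out the degenerate situation where $g$ has a repeated linear factor (which is why separability of $g$ is a hypothesis of Theorem~\ref{thm : dp1}, though for this lemma one can get away with less), and one must be careful that the normalisation dividing through by $\|g\|$ does not spoil the integrality constraints. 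A clean way to organise this is to split the box dyadically and on each dyadic piece $|u_1|\asymp U_1$, $|u_2|\asymp U_2$ bound the number of integer zeros-modulo-$B^{2n}$ of $g$ by factoring $g$ over $\RR$ and estimating the number of lattice points near each real branch of $g=\mathrm{const}$, a routine but slightly delicate count that I would carry out by induction on $\deg g$ or by citing a standard sublevel set inequality.
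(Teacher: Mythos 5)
Your proposal takes a genuinely different route from the paper. The paper adapts Heath-Brown's determinant method: it forms the $N\times(4n+2)$ matrix $\mathbf{C}$ whose rows are the monomials of \eqref{eq:tiramisu} evaluated at the counted points, notes $\mathbf{C}\mathbf{a}=\mathbf{0}$ for the coefficient vector $\mathbf{a}$, and extracts a kernel vector $\mathbf{b}$ from $(4n+1)\times(4n+1)$ subdeterminants, each of size $O(B^{2n(4n+1)})$. Either $\mathbf{b}$ is proportional to the primitive vector $\mathbf{a}$, giving $\|f\|,\|g\|\ll B^{8n^2+4n}$, or all counted points lie on the low-degree auxiliary curve $S\cap T$, which is then handled by B\'ezout and Bombieri--Pila. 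Your approach instead uses a sublevel-set estimate for the form $g$, together with Bombieri--Pila per fibre. Both are viable in spirit, but the paper's method is cleaner: it sidesteps the delicate analytic issues (uniform sublevel bounds, measure versus lattice-point discrepancies) you flag at the end, and it treats $f$ and $g$ simultaneously through a single rank argument.

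There is, however, a genuine gap in your sketch that is not merely "delicate bookkeeping." You control only the case $M=\|g\|$. The lemma concerns $M=\max\{\|f\|,\|g\|\}$, and the case $\|f\|\gg B^{8n^2+4n}$ with $\|g\|$ small (say $\|g\|\ll B^{2n}$) is never addressed: there the sublevel set $\{|g(u_1,u_2)|\leq B^{2n}\}$ can be all of $[-B,B]^2$, and your bound gives nothing. One would need a parallel sublevel argument for the degree-$(2n-2)$ form $f$, together with the observation that $|f(u_1,u_2)|$ large forces $|x^n+xf+g|>B^{2n}$ for all integers $1\leq|x|\leq B^2$ (the $x=0$ fibre points contribute only $O(B^2)$). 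This can be patched, but as written the proof does not cover the lemma's hypothesis. Two smaller technical remarks: your measure bound $O(B^3M^{-1/(2n)})$ has a spurious extra factor of $B$ (scaling $u_i=Bt_i$ gives $B^2M^{-1/(2n)}$, which is in fact better), and the exponent $1/(2n)$ is optimistic for binary forms whose zero directions cluster (e.g.\ $g\approx t_1^{2n-1}t_2$ rescaled to $\|g\|=1$ already gives $\epsilon^{1/(2n-1)}$), though with the ample slack in $8n^2+4n$ this does not affect the conclusion. Finally, as you say, converting measure to a lattice-point count needs care (a rational linear factor of $g$ of small height contributes genuinely $\sim B$ fibres), but since each such fibre carries only $O_\ve(B^{1+\ve})$ points by \eqref{eq:BP}, the resulting $O_\ve(B^{2+\ve})$ is acceptable; it is the $\|f\|$-large case that is the real omission.
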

\begin{proof}
This argument is a variant of one given by Heath-Brown \cite[Thm.~4]{annals}, but we include full details for the sake of completeness.
We shall be interested in polynomials formed from linear combinations of monomials belonging to the set 
\[
\mathcal{E}=\{Y^2\}\cup \{X^n\}\cup \{XU_{1}^{e_{1}}U_{2}^{e_{2}}: e_{1}+e_{2}=2n-2\}
\cup
\{U_{1}^{e_{1}}U_{2}^{e_{2}}: e_{1}+e_{2}=2n\}.
\]
We clearly have  $\#\mathcal{E}=4n+2$. Let 
$\mathbf{v}=(y,x,u_{1},u_{2})$ and let  $\{\mathbf{v}_{1},\dots ,\mathbf{v}_{N} \}$ be the set of all  points that are counted in $N(S;B)$. 
We construct the $N\times (4n+2)$ matrix 
$$\mathbf{C}= (\mathbf{v}_{i}^{\mathbf{e}})_{\substack{1\leq r\leq N\\\mathbf{e}\in\mathcal{E}}},
$$
 whose   $i$th row consists of the $4n+2$ possible monomials in $\mathcal{E}$ in the variables $x_{i},y_{i},u_{1,i}$, $u_{2,i}$. The matrix $\mathbf C$ has rank at most $4n+1$, since the vector $\mathbf{a}\in\mathbb{Z}^{4n+2}$ whose entries  correspond to the  coefficients of 
 \eqref{eq:tiramisu} is such that 
$
\mathbf{C} \mathbf{a}=\boldsymbol{0}.
$
We observe that  $\mathbf{a}$ is a primitive vector since its first entry is $\pm 1$.

Since $\mathbf{C}$ is not of full rank, 
the equation 
$
\mathbf{C} \mathbf{b}=\boldsymbol{0}
$
has a non-zero solution constructed from the sub-determinants of $\mathbf{C}$.
In particular, $|\mathbf{b}|=O(B^{8n^{2}+4n})$  since each entry of $\mathbf{C}$ has modulus $O(B^{2n})$.
There are two cases to consider. 
Suppose first that $\mathbf{b}$ and $\mathbf{a}$ are proportional. Then 
$|\mathbf{a}|\leq |\mathbf{b}|\ll B^{8n^2+4n}$, since $\mathbf{a}$ is a primitive vector. Alternatively, if $\mathbf{b}$ is not a multiple of $\mathbf{a}$, we let  $T\subset \PP(n,2,1,1)$ be the surface 
$B(Y,X,U_1,U_2)=0$, say, 
 corresponding to the  vector $\mathbf{b}$. Then $S\cap T$ has dimension $1$ and we claim that it has at most $4n^2$ irreducible components. 
To see this we introduce the morphism $\mathbb{P}^{3}\rightarrow\mathbb{P}(n,2,1,1)$, given by  $[z_{0},z_{1},z_{2}, z_{3}]\mapsto [z_{0}^{n},z_{1}^{2},z_{2}, z_{3}]$. 
Then the number of irreducible components of $S\cap T$ is bounded by the number of components of the intersection of 
$$
Z_0^{2n}=Z_1^{2n}+Z_1^2f(Z_2,Z_3)+g(Z_2,Z_3)
$$
with 
$B(Z_{0}^{n},Z_{1}^{2},Z_{2},Z_{3})=0$. But this has at most $4n^2$ 
irreducible components, on applying the version of B\'ezout's Theorem in  Fulton
 \cite[Example~$8.4.6$]{fulton}.

 Let $Y$ be an irreducible component of $S\cap T$. 
 Suppose there exists a
 primitive vector $(\mu_{1},\mu_{2})\in \ZZ^2$ such that $Y$ is contained in the plane 
$\mu_{2}U_{1}=\mu_{1}U_{2}$. We fix such a vector and then simply count how many vectors 
 counted by  $N(S;B)$ also satisfy $\mu_2u_1=\mu_1u_2$. 
Assume that $\mu_1\neq 0$. Then this quantity 
is bounded by the number of vectors $(y,x,u_1)\in \ZZ^3$  with 
$$
\mu_1^{2n}y^2=\mu_1^{2n}x^n+\mu_1^2x f(\mu_1,\mu_2)u_1^{2n-2}+
g(\mu_1,\mu_2)u_1^{2n}.
$$
For each $u_1$,  we may appeal to the Bombieri--Pila bound \eqref{eq:BP} to get $O_\ve(B^{1+\ve})$ possibilities for $x,y$, for any $\ve>0$. This case therefore gives an overall 
contribution of $O_\ve(B^{2+\ve})$. Next, we may suppose that 
$Y\not\subset P$ for every 
plane $P\subset \PP(n,2,1,1)$ with equation
$
\mu_{2}U_{1}=\mu_{1}U_{2},
$ 
as $(\mu_{1},\mu_{2})\in \ZZ^2$
runs over  primitive vectors. In particular $\#(Y\cap P)=O(1)$  by 
B\'ezout's theorem. Since any non-zero vector $(u_1,u_2)\in \ZZ^2$ with $|u_1|,|u_2|\leq B$ 
satisfies the equation defining $P$ for at least one primitive vector $(\mu_{1},\mu_{2})\in \ZZ^2$ with norm at most  $B$, we easily obtain a contribution of $O(B^2)$ in this case.
This completes the proof of the lemma.
\end{proof}

The surface $S$ has a discriminant $D_S$ that is an integer polynomial 
in the coefficients of $f$ and $g$, and which vanishes precisely when $S$ is singular.  $D_S$ can be calculated using elimination theory, by following the arguments in 
\cite{GKZ}.  Let $\Delta_{f,g}$ be the absolute value of the product of $D_S$ and the discriminant of the binary form $g$.
Then 
$\Delta_{f,g}$ is an integer, which is a polynomial in the coefficients of $f$ and $g$, and which vanishes precisely when the surface $S$ is singular, or when $g$ has a repeated root. 
 Taken together with our hypotheses in Theorem \ref{thm : dp1},  Lemma \ref{lem:large} 
allows us to proceed under the assumption that
$\Delta_{f,g}$
is a positive  integer such that 
\begin{equation}\label{eq:Delta}
\log \Delta_{f,g} =O(\log B),
\end{equation}
where we recall our convention that the implied constant in any estimate is allowed to depend on $n$.
For  any prime $p$, if $p\nmid \Delta_{f,g}$ then 
 the reduction of $S$ modulo $p$ is smooth and the reduction modulo $p$ of $g$ has no repeated roots.

\subsection{Application of the sieve}

We shall prove Theorem \ref{thm : dp1} using the variant of Heath-Brown's square sieve introduced in \cite{Pie06}.  This sieve offers a great deal of flexibility in the sieving set of primes, which we shall take advantage of here. 
We shall estimate  $N(S;B)$ by  sieving for squares in the non-negative sequence
\[
\omega (m)=\#\left\{(x,u_{1},u_{2})\in \mathbb{Z}^{3}:
\begin{array}{l}
m=x^{n}+xf(u_{1},u_{2})+g(u_{1},u_{2})\\
|x|\leq B^{2}, ~|u_{1}|,|u_{2}|\leq B
\end{array}
\right\}.
\]
Let $P,Q\geq 1$ be parameters depending on $B$ which are to be determined   in due course. For now we shall merely assume that 
\begin{equation}\label{eq:PQ}
Q\leq \sqrt{B}\leq P\leq B, \quad PQ\geq B.
\end{equation}

It is now time to reveal the sieve. 
Let 
\begin{align*}
\mathcal{P}&=\{p \text{ prime}: p\equiv 2\bmod{n} \text{, } p\nmid  \Delta_{f,g} \text{ and }P\leq p\leq 2P\},\\ 
\mathcal{Q}&=\{q \text{ prime}: Q\leq q\leq 2Q\}
\end{align*}
and 
$$
\mathcal{A}=\{p\cdot q : (p,q)\in\mathcal{P}\times\mathcal{Q}\}.
$$
On assuming $B\geq 4n^2$ we clearly have  $p\nmid 2n$ for any $p\in \mathcal{P}.$
According to Pierce \cite[Lemma 2.1]{Pie06},  we have 
\begin{align*}
\sum_{m}\omega (m^{2})&\ll \frac{1}{\#\mathcal{A}}\sum_{m}\omega (m)+
\frac{1}{\#\mathcal{A}^{2}}\sum_{\substack{p,p'\in\mathcal{P}\\p\neq p'}}\sum_{\substack{q,q'\in \mathcal{Q}\\q\neq q'}}\left|\sum_{m}\omega (m)\left(\frac{m}{pq}\right)
\left(\frac{m}{p'q'}\right)\right|\\&\quad +
\frac{\#\mathcal{Q}}{\#\mathcal{A}^{2}}\sum_{\substack{p,p'\in\mathcal{P}\\p\neq p'}}\left|\sum_{m}\omega (m)\left(\frac{m}{pp'}\right)\right|+\frac{1}{\#\mathcal{A}^{2}}|E(\mathcal{P})|\\&\quad + \frac{\#\mathcal{P}}{\#\mathcal{A}^{2}}\sum_{\substack{q,q'\in \mathcal{Q}\\q\neq q'}}\left|\sum_{m}\omega (m)\left(\frac{m}{qq'}\right)\right|+\frac{1}{\#\mathcal{A}^{2}}|E(\mathcal{Q})|,
\end{align*}
where 
\begin{equation}\label{eq:EE}
E(\mathcal{P})=\sum_{q\in\mathcal{Q}}\sum_{\substack{p,p'\in \mathcal{P}\\ p\neq p'}}\sum_{\substack{m\\  q\mid m}}\omega (m)\left(\frac{m}{pp'}\right), \quad
E(\mathcal{Q})=\sum_{p\in\mathcal{P}}\sum_{\substack{q,q'\in \mathcal{Q}\\q\neq q'}}\sum_{\substack{m\\  p\mid m}}\omega (m)\left(\frac{m}{qq'}\right).
\end{equation}

For the first sum on the right hand side we trivially have  $\sum_{m}\omega (m)\ll B^{4}$. 
Next, we clearly have
\begin{align*}
\#\mathcal{P}
&\geq \#\{p \text{ prime}: p\equiv 2\bmod{n}  \text{ and }P\leq p\leq 2P\} - 
\omega( \Delta_{f,g})\\
&\gg \frac{P}{\log B},
\end{align*}
since 
$P\leq B^2$ and 
\eqref{eq:Delta} ensures that 
$
\omega( \Delta_{f,g})\leq \log \Delta_{f,g} \ll \log B.
$
Moreover, 
$\#\mathcal{Q}\gg Q/\log B$, since $Q\leq \sqrt{B}$.
Hence, in view of \eqref{eq:NB},  we have 
\begin{equation}
\label{eq : sieve}
\begin{split}
N(S;B)&\ll  \frac{B^4(\log B)^2}{PQ}
+\frac{1}{\#\mathcal{A}^{2}}
\sum_{\substack{p,p'\in\mathcal{P}\\p\neq p'}}\sum_{\substack{q,q'\in \mathcal{Q}\\q\neq q'}}\left|
C(pp'qq')\right|\\
&\quad +\frac{\log B}{Q\#\mathcal{P}^{2}}
\sum_{\substack{p,p'\in\mathcal{P}\\p\neq p'}}\left|C(pp')\right|
+\frac{1}{\#\mathcal{A}^{2}}|E(\mathcal{P})|\\&\quad +
\frac{\log B}{P\#\mathcal{Q}^{2}}
\sum_{\substack{q,q'\in \mathcal{Q}\\q\neq q'}}\left| C(qq')\right|+\frac{1}{\#\mathcal{A}^{2}}|E(\mathcal{Q})|,
\end{split}
\end{equation}
where
\begin{equation}
\begin{split}
C(r)
&=\sum_{m}\omega (m)\left(\frac{m}{r}\right)\\
&=
\sum_{\substack{|u_{1}|,|u_{2}|\leq B\\|x|\leq B^{2}}}\left(\frac{x^{n}+xf(u_{1},u_{2})+g(u_{1},u_{2})}{r}\right),
\label{eq : main}
\end{split}
\end{equation}
for any  square-free $r\in \NN$.

\subsection{The main oscillatory sum}

This section is devoted to  bounding the sum $C(r)$, as defined in \eqref{eq : main}, for various choices of square-free $r\in \NN$. 
We have 
\begin{align*}
C(r)=~&\sum_{|u_{1}|,|u_{2}|\leq B}\sum_{\alpha \bmod{r}}\left(\frac{\alpha^{n}+\alpha f(u_{1},u_{2})+g(u_{1},u_{2})}{r}\right)\\
&\times \sum_{|x|\leq B^{2}}\frac{1}{r}\sum_{c=1}^{r}e_{r}(c(\alpha-x)),
\end{align*}
on using additive characters to detect the congruence. 
Define 
\begin{equation}\label{eq:S}
S(r,c,u_{1},u_{2})=\sum_{\alpha \bmod{r}}\left(\frac{\alpha^{n}+\alpha f(u_{1},u_{2})+g(u_{1},u_{2})}{r}\right)e_{r}(c\alpha)
\end{equation}
and
$$
U(r,c,B)=\sum_{|u_{1}|,|u_{2}|\leq B}S(r,c,u_{1},u_{2}).
$$
Then we deduce that 
\begin{equation}\label{eq:rosso}
C(r)\ll 
\frac{1}{r}\sum_{c=1}^{r}\min \left(B^{2},\left\|\frac{c}{r}\right\|^{-1}\right)|U(r,c,B)|.
\end{equation}
The exponential sum $S(r,c,u_1,u_2)$ in \eqref{eq:S} satisfies the following basic multiplicativity property.

\begin{lem}
Assume that $r=r_{0}r_{1}$ with $\gcd(r_{0},r_{1})=1$. Then
\[
S(r,c,u_{1},u_{2})=S(r_{0},c\overline{r}_{1},u_{1},u_{2})S(r_{1},c\overline{r}_{0},u_{1},u_{2}),
\]
where $r_{1}\overline{r}_{1}\equiv 1\bmod{r_{0}}$ and $r_{0}\overline{r}_{0}\equiv 1\bmod{r_{1}}$.
\label{lem : splitprime}
\end{lem}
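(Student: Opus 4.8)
The plan is to deduce the multiplicativity of $S(r,c,u_1,u_2)$ from the corresponding multiplicativity of both the Jacobi symbol and the additive character $e_r$, by reindexing the summation over $\alpha \bmod r$ via the Chinese Remainder Theorem. First I would fix the coprime factorisation $r = r_0 r_1$ and write each residue $\alpha \bmod r$ uniquely as $\alpha \equiv r_1 \overline{r}_1 \alpha_0 + r_0 \overline{r}_0 \alpha_1 \bmod r$, where $\alpha_0$ runs over residues mod $r_0$ and $\alpha_1$ over residues mod $r_1$, and $\overline{r}_1, \overline{r}_0$ are the inverses specified in the statement. As $(\alpha_0,\alpha_1)$ ranges over $\ZZ/r_0\ZZ \times \ZZ/r_1\ZZ$, the value $\alpha$ ranges over a complete residue system mod $r$.

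Next I would track what happens to each factor under this substitution. Writing $h(\alpha) = \alpha^n + \alpha f(u_1,u_2) + g(u_1,u_2)$, the Jacobi symbol splits as $\left(\frac{h(\alpha)}{r}\right) = \left(\frac{h(\alpha)}{r_0}\right)\left(\frac{h(\alpha)}{r_1}\right)$ by multiplicativity in the (coprime, odd) denominator. Since $\alpha \equiv \alpha_0 \bmod{r_0}$ — because $r_1 \overline{r}_1 \equiv 1 \bmod{r_0}$ and $r_0 \equiv 0 \bmod{r_0}$ — we have $h(\alpha) \equiv h(\alpha_0) \bmod{r_0}$, so $\left(\frac{h(\alpha)}{r_0}\right) = \left(\frac{h(\alpha_0)}{r_0}\right)$, and similarly $\left(\frac{h(\alpha)}{r_1}\right) = \left(\frac{h(\alpha_1)}{r_1}\right)$. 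For the additive character, $e_r(c\alpha) = e_r(c(r_1\overline{r}_1\alpha_0 + r_0\overline{r}_0\alpha_1)) = e_{r_0}(c\overline{r}_1\alpha_0)\, e_{r_1}(c\overline{r}_0\alpha_1)$, using $e_{r_0 r_1}(r_1 t) = e_{r_0}(t)$ and likewise with the roles swapped. Substituting these into \eqref{eq:S} factors the double sum over $(\alpha_0,\alpha_1)$ as a product of a sum over $\alpha_0 \bmod{r_0}$ and a sum over $\alpha_1 \bmod{r_1}$, which are exactly $S(r_0, c\overline{r}_1, u_1, u_2)$ and $S(r_1, c\overline{r}_0, u_1, u_2)$.

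The only genuinely delicate point is bookkeeping with the inverses: one must be careful that the inverse appearing in the CRT reconstruction of $\alpha$ is the same as the inverse twisting the character argument in the statement, and that $\left(\frac{h(\alpha_0)}{r_0}\right)$ really depends on $\alpha_0$ alone and not on the representative $\alpha$ — both of which follow immediately once the congruences $r_1\overline{r}_1 \equiv 1 \bmod{r_0}$ and $r_0\overline{r}_0 \equiv 1 \bmod{r_1}$ are used consistently. There is no deep obstacle here; the lemma is a standard Chinese Remainder Theorem factorisation and the proof is a couple of lines once the substitution is set up. I would simply present the reindexing, note the two multiplicativity facts, and conclude.
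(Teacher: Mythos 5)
Your proof is correct and follows essentially the same route as the paper: reindex $\alpha \bmod r$ via the Chinese Remainder Theorem, split the Jacobi symbol by multiplicativity in the denominator, and factor the additive character. The only cosmetic difference is that you parametrise $\alpha \equiv r_1\overline{r}_1\alpha_0 + r_0\overline{r}_0\alpha_1 \bmod r$ (so that $\alpha \equiv \alpha_0 \bmod{r_0}$ directly), whereas the paper writes $\alpha = \alpha_1 r_0 + \alpha_0 r_1$ and performs a further change of variables $\alpha_0 \mapsto \alpha_0\overline{r}_1$ at the end; both land in the same place.
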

\begin{proof}
This result follows easily on using the Chinese remainder theorem to note that 
$\alpha_1r_0+\alpha_0r_1$ runs through all residue classes modulo $r$ as 
$\alpha_0$ runs through residue classes modulo $r_0$ and 
$\alpha_1$ runs through residue classes modulo $r_1$.
Thus \begin{align*}
S(r,c,u_{1},u_{2})
&=\left(\sum_{\alpha_{0} =1}^{r_{0}}\left(\frac{(\alpha_{0}r_{1})^{n}+(\alpha_{0}r_{1}) f(u_{1},u_{2})+g(u_{1},u_{2})}{r_{0}}\right)e_{r_{0}}(c\alpha_{0})\right)\\&
\quad\times\left(\sum_{\alpha_{1} =1}^{r_{1}}\left(\frac{(\alpha_{1}r_{0})^{n}+(\alpha_{1}r_{0}) f(u_{1},u_{2})+g(u_{1},u_{2})}{r_{1}}\right)e_{r_{1}}(c\alpha_{1})\right)\\& =S(r_{0},c\overline{r}_{1},u_{1},u_{2})S(r_{1},c\overline{r}_{0},u_{1},u_{2}),
\end{align*}
on making a change of variables.
\end{proof}

We shall need to 
bound the exponential sum 
$S(r,c,u_1,u_2)$ in \eqref{eq:S}, for given $\u\in \ZZ^2$. 
Lemma \ref{lem : splitprime} ensures that it suffices to look at prime values of $r$, in which setting 
the following result demonstrates that square-root cancellation occurs.

\begin{lem}
Let $p$ be a prime and let $n$ be odd.
For any $a,b,c\in\mathbb{F}_{p}$ there is a constant $C_n>0$ such that 
$$
\left|\sum_{x\in\mathbb{F}_{p}}\left(\frac{x^{n}+ax+b}{p}\right)e_{p}(cx)\right|\leq C_n\sqrt{p}.
$$
\label{lem : riem}
\end{lem}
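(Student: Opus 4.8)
The plan is to interpret the sum as a complete character sum attached to a curve over $\FF_p$ and invoke the Weil bound. Concretely, the quantity $\sum_{x\in\FF_p}\left(\frac{x^n+ax+b}{p}\right)e_p(cx)$ is a mixed character sum: a multiplicative character (the Legendre symbol) composed with the polynomial $h(x)=x^n+ax+b$, twisted by the additive character $x\mapsto e_p(cx)$. The standard tool here is Weil's theorem on such sums (see Weil, or the textbook accounts, e.g. Schmidt or Iwaniec--Kowalski): if $\chi$ is a multiplicative character of order $d$ and $\psi$ a nontrivial additive character, then $\big|\sum_{x\in\FF_p}\chi(h_1(x))\psi(h_2(x))\big|\leq (D-1)\sqrt p$, provided the sum is not degenerate — the precise nondegeneracy condition being that $h_1$ is not a $d$-th power (up to a constant) times a polynomial, in the twisted case combined with $h_2$ nonconstant, and here $D$ depends only on $\deg h_1$ and $\deg h_2$. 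So the bound $C_n\sqrt p$ with $C_n$ depending only on $n$ would follow once the nondegeneracy is checked, and the two edge cases $c=0$ (pure multiplicative character sum, Weil) and the trivial character situation are handled separately.

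First I would dispose of the easy cases. If $p\mid 2n$ there are only finitely many such primes (a bounded set depending on $n$), and for those the sum is trivially at most $p\leq C_n\sqrt p$ after enlarging $C_n$; so assume $p\nmid 2n$. Next, the case $c\equiv 0\bmod p$ reduces to the pure multiplicative sum $\sum_x\left(\frac{h(x)}{p}\right)$, and here Weil's bound gives $\leq (\deg h - 1)\sqrt p = (n-1)\sqrt p$ provided $h(x)=x^n+ax+b$ is not a perfect square in $\FF_p[x]$. Since $n$ is odd, $h$ has odd degree, hence cannot be a square; this is exactly where oddness of $n$ enters, matching the remark in the introduction that the estimate can fail for even $n$ (e.g. $h$ could be a square). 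So assume now $c\not\equiv 0$.

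For $c\not\equiv 0$, I would apply the mixed-character Weil bound with $h_1(x)=x^n+ax+b$ and $h_2(x)=cx$. The nondegeneracy hypothesis to verify is that there is no $g\in\FF_p[x]$ with $h_1(x)=\kappa\, g(x)^2$ for a constant $\kappa$ — again automatic since $\deg h_1 = n$ is odd — together with $h_2$ being a genuinely nonconstant polynomial, which holds as $c\neq 0$. Under these conditions the Weil bound yields a constant depending only on $\max(\deg h_1,\deg h_2)=n$, i.e. one may take $C_n = n$ (or $n-1$), uniformly in $a,b,c$. The only mild subtlety is that $a$ or $b$ could degenerate $h_1$ in some other way — e.g. $h_1$ having repeated roots — but repeated roots do not affect the validity of the Weil estimate for mixed sums; the square-root cancellation persists as long as $h_1$ is not (a constant times) a square, so no further case analysis is needed.

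The main obstacle, such as it is, is purely bookkeeping: pinning down a clean citable form of the mixed-character Weil bound that applies verbatim to $\sum_x\chi(h_1(x))\psi(h_2(x))$ with the correct nondegeneracy hypothesis and an explicit constant depending only on the degrees. Once that is in hand the argument is a one-line application plus the trivial handling of $p\mid 2n$. I do not anticipate any genuinely hard step; the content is simply recognizing the sum as a Weil sum and checking that oddness of $n$ forces $x^n+ax+b$ to be a non-square, which is what makes the bound unconditional in $a,b,c$.
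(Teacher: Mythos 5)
Your approach is essentially the same as the paper's: reduce to Weil's theorem on mixed multiplicative/additive character sums (Schmidt's Theorems 2B and 2G in the paper's citation) and observe that oddness of $n$ rules out the degenerate case where $x^n+ax+b$ is a constant multiple of a square. The substance is identical, and your handling of the edge cases $p\mid 2n$ and $c=0$ is fine.

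The one place where you are a little too breezy is the claim that ``repeated roots do not affect the validity of the Weil estimate for mixed sums\ldots so no further case analysis is needed.'' That is true in spirit, but the standard citable form of the bound (Schmidt, and most textbook accounts) is stated for a polynomial inside the multiplicative character that is $d$-th-power free, and the constant is measured by the number of distinct roots rather than the raw degree. So one does need the intermediate step: write $x^n+ax+b=h_1(x)h_2(x)^2$ with $h_1$ separable, note $\bigl(\frac{h_1 h_2^2}{p}\bigr)=\bigl(\frac{h_1}{p}\bigr)$ off the $O(n)$ zeros of $h_2$, and only then invoke Weil/Schmidt with $h_1$ in place of $x^n+ax+b$. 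That $h_1$ is nonconstant is exactly where odd $n$ is used. This is precisely what the paper's proof spells out; your proposal acknowledges the nondegeneracy hypothesis but doesn't perform the reduction that makes the cited theorem literally applicable. It is a bookkeeping matter, not a gap, but it is not quite a ``one-line application'' without it.
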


\begin{proof}
On adjusting $C_n$ we can assume that  $p$  is an odd prime. We start by observing that if $\theta\in\overline{\FF}_{p}$ is a root $T^{n}+aT+b$ of multiplicity $r$, then 
$$
T^{n}+aT+b=\Phi_{\theta} (T)^{r}\Psi(T),
$$ 
where $\Phi_{\theta}(T)$ is the minimal polynomial of $\theta$ over $\FF_{p}$ and $\Psi(T)\in\FF_{p} [T]$ is such that $\Psi(\theta)\neq 0$. 
We claim that  there exist  polynomials $h_{1},h_{2}\in\FF_{p}[T]$ such that $T^{n}+aT+b=h_{1}h_{2}^{2}$. If $r$ is even this is obvious with 
$h_1= \Psi$ and 
$h_2=\Phi_\theta^{r/2}$. If $r=2k+1$ is odd, then we take 
$h_1=\Psi\Phi_\theta$ and $h_2=\Phi_\theta^{k}$.
It is clear that  $h_{1}$ is non-constant,
since we are assuming $n$ to be  odd. 
Moreover, we can assume that  $h_{1}$ separable, since any square factors can be absorbed into the term $h_2^2$. 
For any $x$ such that $h_{2}(x)\neq 0$, we have 
\[
\left(\frac{x^{n}+ax+b}{p}\right)=\left(\frac{h_{1}(x)h_{2}(x)^{2}}{p}\right)=\left(\frac{h_{1}(x)}{p}\right).
\]
Thus
\begin{align*}
\sum_{x\in\mathbb{F}_{p}}\left(\frac{x^{n}+ax+b}{p}\right)e_{p}(cx)
&= 
\sum_{\substack{x\in\mathbb{F}_{p}\\ h_{2}(x)\neq 0}}
\left(\frac{h_{1}(x)}{p}\right)e_{p}(cx)\\
&=
\sum_{x\in\mathbb{F}_{p}}\left(\frac{h_{1}(x)}{p}\right)e_{p}(cx) +O(1)\\
&\leq C_n\sqrt{p},
\end{align*}
thanks to Theorems 2B and  2G in Schmidt \cite[Chapter II]{Sch76}.
\end{proof}

Note that if $n$ were even the left hand side would be $p$ if $a=b=c=0$.  
Thus it is crucial to assume that $n$ is odd in  
Lemma \ref{lem : riem}, in order to have a result that applies to all $a,b,c\in \FF_q$. 
 We are now ready to record our first result for $U(r,c,B)$.

\begin{lem}\label{lem:bianco1}
Assume that $n$ is odd and let   $r\in \NN$ be  square-free. 
There exists a constant $C_n>0$ depending only on $n$ such that 
$$
U(r,c,B)\leq C_n^{\omega(r)} 
B^2 r^{1/2}.
$$
\end{lem}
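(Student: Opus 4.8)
The plan is to reduce the estimate for $U(r,c,B)$ to the prime-modulus bound of Lemma \ref{lem : riem} via the multiplicativity in Lemma \ref{lem : splitprime}, and then sum trivially over $u_1,u_2$. First I would factor $r=\prod_{p\mid r} p$ into primes (recall $r$ is square-free). By iterating Lemma \ref{lem : splitprime}, for each fixed $\u=(u_1,u_2)$ we have
\[
S(r,c,u_1,u_2)=\prod_{p\mid r} S\bigl(p,c\overline{(r/p)},u_1,u_2\bigr),
\]
where $\overline{(r/p)}$ is the inverse of $r/p$ modulo $p$. So it suffices to bound each factor $S(p,c',u_1,u_2)$ with $c'\in\FF_p$. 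But $S(p,c',u_1,u_2)$ is exactly the sum
\[
\sum_{\alpha\bmod p}\left(\frac{\alpha^n+\alpha f(u_1,u_2)+g(u_1,u_2)}{p}\right)e_p(c'\alpha),
\]
which is of the form treated in Lemma \ref{lem : riem} with $a=f(u_1,u_2)\bmod p$ and $b=g(u_1,u_2)\bmod p$. Since $n$ is odd, Lemma \ref{lem : riem} gives $|S(p,c',u_1,u_2)|\le C_n\sqrt p$, uniformly in $a,b,c'$.

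Multiplying over the primes dividing $r$ yields
\[
|S(r,c,u_1,u_2)|\le \prod_{p\mid r} C_n\sqrt p = C_n^{\omega(r)} r^{1/2},
\]
again uniformly in $\u$ and $c$. Then, summing over the range $|u_1|,|u_2|\le B$, which contains at most $(2B+1)^2\ll B^2$ lattice points, gives
\[
|U(r,c,B)|=\Bigl|\sum_{|u_1|,|u_2|\le B} S(r,c,u_1,u_2)\Bigr|\le (2B+1)^2 C_n^{\omega(r)} r^{1/2}\le C_n'^{\,\omega(r)} B^2 r^{1/2},
\]
after absorbing the constant $(2B+1)^2/B^2$ and renaming $C_n'$ to $C_n$. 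This is the claimed bound.

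I do not expect any serious obstacle here: the lemma is essentially just bookkeeping, packaging Lemma \ref{lem : riem} through the multiplicativity of Lemma \ref{lem : splitprime} and a trivial count of the $\u$-variables. The only points requiring a little care are (i) checking that the uniformity in $a,b,c$ in Lemma \ref{lem : riem} is genuinely uniform—so that it applies with $a,b$ depending on $\u$ and with the twisted $c'$ coming from the CRT decomposition—and (ii) keeping track of the constant, noting that $C_n^{\omega(r)}$ is the natural shape since we pick up one factor of $C_n$ per prime divisor of $r$. One might also remark that the trivial bound $|S(p,\cdot,\cdot,\cdot)|\le p$ at primes where Lemma \ref{lem : riem} would be vacuous is never needed, precisely because $n$ odd makes Lemma \ref{lem : riem} unconditional in $a,b,c$.
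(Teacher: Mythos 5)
Your proposal is correct and matches the paper's approach exactly: the paper's proof of this lemma is a one-liner citing precisely Lemmas~\ref{lem : splitprime} and~\ref{lem : riem}, and you have spelled out the multiplicative factorisation, the uniform prime-modulus bound, and the trivial sum over $\u$ that this one-liner is compressing.
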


\begin{proof}
This is an easy consequence of   Lemmas 
\ref{lem : splitprime} and 
 \ref{lem : riem}.
\end{proof}

The previous estimate will be enough to handle all but the second term in \eqref{eq : sieve}.  To handle the case $r=pp'qq'$ 
for  distinct primes  $p,p'\in\mathcal{P}$ and $q,q'\in\mathcal{Q}$, 
it will be convenient to set 
$r_{0}=pp'$ and  $r_{1}=qq'$. 
 We observe that $r_0\asymp P^2$ and 
 $r_{1}\asymp Q^{2}$. Since $PQ\leq B^{3/2}$ in \eqref{eq:PQ} we deduce that  the range of summation for $x$ is $B^{2}\gg \sqrt{r_{0}r_{1}}$. Hence it makes sense to complete the summation over $x$ to all the classes modulo $r_{0}r_{1}$, as we have done here.
The following estimate for  $U(r_{0}r_{1},c,B)$ is obtained using the $q$-analogue of the van der Corput 
inequality. 

\begin{lem}\label{lem:bianco2}
We have
$$
U(r_0r_1,c,B)\ll 
\begin{cases}
Br_0^{1/2}r_1^{3/2}+
B r_0^{5/4}r_1^{1/2}\log r_0 &\text{ if $\gcd(c,r_{0})=1$,}\\ 
B^2 (r_0r_1)^{1/2} &
\text{ if $\gcd(c,r_{0})>1$.}\end{cases}
$$
\end{lem}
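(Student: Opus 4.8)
The plan is to bound $U(r_0r_1,c,B) = \sum_{|u_1|,|u_2|\le B} S(r_0r_1,c,u_1,u_2)$ by applying the $q$-analogue of van der Corput differencing in the $u_1$ (say) variable with modulus $r_1 = qq'$. Concretely, after shifting $u_1 \mapsto u_1 + v r_1$ for $v$ in a range of length $\asymp B/r_1$ and averaging, one obtains a bound of the shape
\[
|U(r_0r_1,c,B)|^2 \ll \frac{B^2}{B/r_1}\left(\frac{B}{r_1} + B^2 \sum_{0<|v|\le B/r_1} \Bigl| \frac{1}{B}\sum_{|u_2|\le B}\sum_{|u_1|\le B} S(r_0r_1,c,u_1+vr_1,u_2)\overline{S(r_0r_1,c,u_1,u_2)} \Bigr| \right),
\]
or more precisely one isolates a diagonal term $v=0$ contributing $B^2 r_1 \cdot U$-type estimates and off-diagonal terms $v\ne 0$. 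By Lemma~\ref{lem : splitprime}, $S(r_0r_1,\dots) = S(r_0,c\bar r_1,\dots)S(r_1,c\bar r_0,\dots)$, and since $u_1+vr_1 \equiv u_1 \bmod r_1$, the $S(r_1,\dots)$ factor is unchanged by the shift; so the $u_1$-difference only genuinely affects the $r_0$-factor. The $r_1$-part just contributes $|S(r_1,c\bar r_0,u_1,u_2)|^2$, summed over $u_1,u_2 \bmod r_1$, which by Lemma~\ref{lem : riem} (applied to $r_1 = qq'$ via multiplicativity) is $O(r_1 \cdot r_1) = O(r_1^2)$ up to constants depending on $\omega(r_1)$.

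The core of the argument is therefore the twisted double sum over $\FF_{r_0}$:
\[
T(v) = \sum_{u_1,u_2 \bmod r_0} S(r_0,c\bar r_1,u_1+vr_1,u_2)\,\overline{S(r_0,c\bar r_1,u_1,u_2)},
\]
which, after opening both $S$-sums and executing the sums over $u_1,u_2 \bmod r_0$, becomes a character sum in two extra variables $\alpha,\beta \bmod r_0$ coming from the two copies of $S$. This is exactly the point where the sum $W_p$ of the introduction enters: for $r_0 = pp'$ prime factors, one reduces by CRT to each prime $p$ and the inner object becomes $W_p(\lambda,\h,\boldsymbol\mu)$ in the notation of \eqref{eq:Wp}. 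I would invoke the two available bounds for $W_p$: the trivial-ish $W_p = O(p^3)$ always, and $W_p = O(p^{5/2})$ when $\lambda \ne 0$ and $(\h,\boldsymbol\mu)\ne(\0,\0)$ (the key input proved via Hooley's method of moments in the appendix). The parameter $\lambda$ is (up to units) the shift $v r_1$ reduced mod $p$, which is nonzero precisely when $p \nmid v$; since $|v| \le B/r_1 < P \le p$, we have $p\nmid v$ for all $v\ne 0$ in range, so the generic bound $W_p = O(p^{5/2})$ applies to each prime factor of $r_0$, giving $T(v) \ll r_0^{5/2}$ when $\gcd(c,r_0)=1$ — wait, more carefully: the condition $\lambda\ne 0$ requires $\gcd(c,r_0)=1$ as well, since $\lambda$ also involves $c$; this is exactly why the lemma splits into the two cases on $\gcd(c,r_0)$.

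Assembling: in the case $\gcd(c,r_0)=1$, for $v\ne 0$ we get $|T(v)| \ll r_0^{5/2}$ (absorbing $\omega(r_0)$-dependence into the implied constant, since $\omega(r_0)\le 2$ here), and $|T(0)| \ll r_0^3$; plugging into the van der Corput inequality with the $\asymp B/r_1$ shifts and the $r_1^2$ from the $r_1$-factor yields, after taking square roots,
\[
U(r_0r_1,c,B) \ll B r_0^{1/2} r_1^{3/2} + B r_0^{5/4} r_1^{1/2} \log r_0,
\]
where the first term is the diagonal $v=0$ contribution and the second comes from summing $(B r_1^2 \cdot (r_1/B)\cdot r_0^{5/2})$-type quantities over $v$, with the $\log r_0$ arising from the sum $\sum_{0<|v|\le B/r_1}\min(B^2,\|\cdot\|^{-1})$-style completion over the (completed) $x$-variable — one does indeed need to be a bit careful completing the $x$-sum modulo $r_0r_1$, which is legitimate because $B^2 \gg \sqrt{r_0 r_1}$ as noted in the text, and this introduces the $\log r_0$ through $\sum_{\alpha\bmod r_0}\min(\cdots)$. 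When $\gcd(c,r_0)>1$ the generic bound on $W_p$ is unavailable for the bad prime, so one falls back on Lemma~\ref{lem:bianco1}, giving directly $U(r_0r_1,c,B) \le C_n^{\omega(r_0r_1)} B^2 (r_0r_1)^{1/2}$. The main obstacle is the bookkeeping in the van der Corput step — correctly tracking the modulus split $r = r_0 r_1$, verifying that the $r_1$-factor is shift-invariant, and confirming that the completed $x$-sum and the change of variables identify the inner sum with $W_p$ so that the hypotheses $\lambda\ne0$, $(\h,\boldsymbol\mu)\ne(\0,\0)$ are met exactly under $\gcd(c,r_0)=1$.
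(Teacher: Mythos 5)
Your overall strategy matches the paper's — $q$-analogue van der Corput differencing at modulus $r_1$, Cauchy--Schwarz, completion modulo $r_0$, reduction via CRT to $W_p$, and then $W_p\ll p^{5/2}$ in the generic case versus Lemma~\ref{lem:bianco1} when $\gcd(c,r_0)>1$. But there is a quantitative gap: you difference in \emph{one} variable ($u_1\mapsto u_1+vr_1$, leaving $u_2$ fixed), whereas the paper differences in \emph{both} $u_1$ and $u_2$ simultaneously, shifting $\u\mapsto\u+\h r_1$ with $\h\in[1,H]^2$. Tracking the exponents, the two-dimensional shift gives $H^{2}|U|\leq\sqrt{\Sigma_1\Sigma_2}$ with $\Sigma_2\leq 2H^2(\Sigma_{2,A}+\Sigma_{2,B})$, hence a prefactor $H^{-1}$ in front of $\sqrt{\Sigma_1(\Sigma_{2,A}+\Sigma_{2,B})}$; the one-dimensional shift only yields $H|U|\leq\sqrt{\Sigma_1\Sigma_2}$ with $\Sigma_2\leq 2H(\Sigma_{2,A}+\Sigma_{2,B})$, i.e.\ a prefactor $H^{-1/2}$. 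Since $\Sigma_1\ll B^2r_1$ and $\Sigma_{2,A}\ll B^2r_0$, the diagonal contribution from your version is
\[
H^{-1/2}\sqrt{B^2r_1\cdot B^2r_0}=H^{-1/2}B^2(r_0r_1)^{1/2}\asymp B^{3/2}r_0^{1/2}r_1,
\]
which is larger than the claimed $Br_0^{1/2}r_1^{3/2}$ by a factor $(B/r_1)^{1/2}=H^{1/2}\geq1$. (The $\Sigma_{2,B}$-term comes out the same either way, since the gain of $H^{-1/2}$ is offset by your $\Sigma_{2,B}$ having roughly $H$ rather than $H^2$ shifts.) So the one-variable differencing cannot produce the first term of the lemma as stated; one would instead obtain $B^{3/2}r_0^{1/2}r_1+Br_0^{5/4}r_1^{1/2}\log r_0$, which after re-optimising $P,Q$ still beats the trivial bound but gives a weaker final exponent than $3-1/20$.

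Two smaller points. First, you identify $\lambda$ with ``the shift $vr_1$ reduced mod $p$'': this conflates the two parameters. In $W_p(\lambda,\h,\boldsymbol\mu)$ as it arises from the paper's $W(\k)$, the parameter $\lambda$ comes from $c\bar r_1$ (the frequency left over from completing the $x$-sum in $S(r,c,\cdot)$), while the shift $vr_1$ becomes $h_1$ with $h_2=0$; you do eventually correct yourself by noting that $\lambda\neq0$ requires $\gcd(c,r_0)=1$, which is the right split. Second, $\boldsymbol\mu$ arises from completing the $u_1,u_2$ summation to residues modulo $r_0$ (these are the $k_1,k_2$ in the paper's $T(r_0,\h)$), and the $\log r_0$ factors come from the resulting $\min(B,\|k_i/r_0\|^{-1})$ kernels; attributing the $\log r_0$ to ``completing the $x$-variable'' is not the source, since the $x$-sum was already completed before the definition of $U(r,c,B)$. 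Neither of these derails the structure, but the dimension of the van der Corput shift does need to be two to recover the stated bound.
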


The proof of this result will occupy the remainder of this subsection. 
We start the proof by defining 
\[
A(u_{1},u_{2})=
\begin{cases}
S(r,c,u_{1},u_{2})		&\text{ if $|u_{1}|,|u_{2}|\leq B$,}\\
0						&\text{ otherwise,}
\end{cases}
\]
and
\[
A_{0}(u_{1},u_{2})=
\begin{cases}
S(r_{0},\overline{r}_{1}c,u_{1},u_{2})		&\text{ if $|u_{1}|,|u_{2}|\leq B$,}\\
0						&\text{otherwise.}
\end{cases}
\]
We define  $A_{1}(u_{1},u_{2})$  similarly and we introduce the parameter
$$
H=\left[\frac{4B}{r_{1}}\right].
$$ 
Since $r_1=qq'\leq 4Q^2\leq 4B$ by \eqref{eq:PQ}, we see that $H\in \NN$. 
We will follow the $q$-analogue of the van der Corput method. First of all,  
we find that  
\begin{align*}
H^{2}U(r,c,B)&=\sum_{\mathbf{h}\in[1, H]^{2}}\sum_{\mathbf{u}\in \ZZ^2}
A(\mathbf{u}+\mathbf{h}r_{1})\\
&=\sum_{\mathbf{u}\in\mathbb{Z}^{2}}\sum_{\mathbf{h}\in[1, H]^{2}}A_{0}(\mathbf{u}+\mathbf{h}r_{1})A_{1}(\mathbf{u}+\mathbf{h}r_{1})\\
&=\sum_{\mathbf{u}\in\mathbb{Z}^2}S(r_{1},\overline{r}_{0}c,u_{1},u_{2})\sum_{\mathbf{h}\in[1, H]^{2}}A_{0}(\mathbf{u}+\mathbf{h}r_{1}),
\end{align*}
where $\mathbf{u}=(u_{1},u_{2})$ and $\mathbf{h}=(h_{1},h_{2})$. Let 
$|\cdot|$ be the sup norm on $\RR^2$.
By the Cauchy--Schwarz inequality we get 
\begin{equation}
H^{2}|U(r,c,B)|\leq\sqrt{\Sigma_{1}\Sigma_{2}},
\label{eq : vancorsigma}
\end{equation}
where
\begin{equation}\label{eq:sigma1}
\Sigma_{1}=
\max_{\h \in [1,H]^2} 
\sum_{
\substack{\mathbf{u}\in\mathbb{Z}^{2}\\
|\u+\h r_1|\leq B}}
|S(r_{1},c\overline{r}_{0},u_{1},u_{2})|^{2}
\end{equation}
and
\[
\Sigma_{2}=\sum_{\mathbf{u}\in\mathbb{Z}^{2}}\left|\sum_{\mathbf{h}\in[1, H]^{2}}A_{0}(\mathbf{u}+\mathbf{h}r_{1})\right|^{2}.
\]
Moreover,
\[
\begin{split}
\Sigma_{2}&=\sum_{\mathbf{h}\in[1, H]^{2}}
\sum_{\mathbf{j} \in [1, H]^{2}}\sum_{\mathbf{u}\in\mathbb{Z}^{2}}A_{0}(\mathbf{u}+\mathbf{h}r_{1})\overline{A_{0}(\mathbf{u}+\mathbf{j}r_{1})}\\
&=\sum_{\mathbf{h}\in[1, H]^{2}}\sum_{\mathbf{j}\in [1,H]^{2}}\sum_{\mathbf{u}\in\mathbb{Z}^{2}}A_{0}(\mathbf{u}+(\mathbf{h}-\mathbf{j})r_{1})\overline{A_{0}(\mathbf{u})}\\&\leq 2H^{2}\sum_{
\substack{\mathbf{h}\in \ZZ^2\\ 
|\h|\leq H}}
\left|\sum_{\mathbf{u}\in\mathbb{Z}^{2}}A_{0}(\mathbf{u}+\mathbf{h}r_{1})\overline{A_{0}(\mathbf{u})}\right|.
\end{split}
\]
We have 
\begin{equation}\label{eq:sigma2}
\Sigma_2\leq 2H^2(\Sigma_{2,A}+\Sigma_{2,B}),
\end{equation}
where
\begin{equation}\label{eq:sigma2A}
\Sigma_{2,A}=\sum_{\mathbf{u}\in\mathbb{Z}^{2}}|A_{0}(\mathbf{u})|^{2}
\end{equation}
and
\begin{equation}\label{eq:sigma2B}
\Sigma_{2,B}=\sum_{\substack{\mathbf{h}\in \ZZ^2\\
0<|\h|\leq H}}
\left|\sum_{\mathbf{u}\in\mathbb{Z}^{2}}A_{0}(\mathbf{u}+\mathbf{h}r_{1})\overline{A_{0}(\mathbf{u})}\right|.
\end{equation}

The following  estimate is enough 
to complete the treatment of $\Sigma_1$ and $\Sigma_{2,A}$
in 
\eqref{eq:sigma1} and 
\eqref{eq:sigma2A}, respectively.

\begin{lem}\label{lem:pizza}
We have $\Sigma_1=O(B^2r_1)$ and 
$\Sigma_{2,A}=O(B^2r_0).$
\end{lem}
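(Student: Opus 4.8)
The plan is to bound both $\Sigma_1$ and $\Sigma_{2,A}$ by the completely elementary device of counting the $\u$ that contribute and invoking the pointwise square-root bound for the complete sum $S$ that is already in hand. Since $r_1=qq'$ is a product of two distinct primes, Lemma~\ref{lem : splitprime} factorises $S(r_1,c\bar r_0,u_1,u_2)$ as a product of two prime-modulus sums, and Lemma~\ref{lem : riem}, applied with $a=f(u_1,u_2)$ and $b=g(u_1,u_2)$ and using that its bound is uniform in $a,b,c$, gives that each factor is $O(q^{1/2})$, respectively $O(q'^{1/2})$. Hence $|S(r_1,c\bar r_0,u_1,u_2)|\le C_n^{2}r_1^{1/2}$ for all $\u$, with an implied constant depending only on $n$, and likewise $|S(r_0,\bar r_1 c,u_1,u_2)|\le C_n^{2}r_0^{1/2}$.

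For the first estimate, fix $\h\in[1,H]^2$. The condition $|\u+\h r_1|\le B$ confines each coordinate $u_i$ to an interval of length $2B$, so there are $O(B^2)$ admissible $\u\in\ZZ^2$; bounding each summand in \eqref{eq:sigma1} by $O(r_1)$ and summing yields $\Sigma_1\ll B^2r_1$, uniformly in $\h$. For the second estimate, unfolding the definition of $A_0$ gives $\Sigma_{2,A}=\sum_{|u_1|,|u_2|\le B}|S(r_0,\bar r_1 c,u_1,u_2)|^2$ by \eqref{eq:sigma2A}, a sum of $O(B^2)$ terms each of size $O(r_0)$, so that $\Sigma_{2,A}\ll B^2 r_0$.

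I do not expect a genuine obstacle here: the argument uses no cancellation in the $\u$-sum whatsoever, which is precisely why the same crude input will be far too weak for the off-diagonal sum $\Sigma_{2,B}$ in \eqref{eq:sigma2B}, where one must exploit the variation of $A_0(\u+\h r_1)\overline{A_0(\u)}$ with $\u$. The only small points to watch are that Lemma~\ref{lem : riem} is available because $n$ is odd, that its uniformity in the parameters lets us apply it with $a,b$ depending on $\u$, and that the factors $C_n^{\omega(r_i)}$ produced by Lemma~\ref{lem : splitprime} are harmless since $\omega(r_0)=\omega(r_1)=2$.
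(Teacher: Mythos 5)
Your proof is correct and takes essentially the same route as the paper's: bound each value of $S$ pointwise by $C_n^{2}r_i^{1/2}$ via Lemmas~\ref{lem : splitprime} and~\ref{lem : riem}, then count the $O(B^2)$ contributing $\u$. Your treatment of $\Sigma_1$ is marginally cleaner, since by noting that for fixed $\h$ the condition $|\u+\h r_1|\leq B$ already confines each $u_i$ to an interval of length $2B$, you avoid the paper's intermediate bound $\Sigma_1\ll (B+Hr_1)^2 r_1$ together with the subsequent observation that $Hr_1\ll B$.
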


\begin{proof}
Appealing to Lemma \ref{lem : riem} and the multiplicativity property in Lemma \ref{lem : splitprime}, we deduce that 
$$
\Sigma_1\ll (B+Hr_1)^2r_1 \quad \text{ and } \quad
\Sigma_{2,A}\ll B^2r_0.
$$
The lemma follows on noting that  $Hr_1=[B/r_1]r_1\leq B$.
\end{proof}

We now turn to the estimation of $\Sigma_{2,B}$, as defined in \eqref{eq:sigma2B}. We may  write
\begin{equation}\label{eq:pasta}
\Sigma_{2,B}=\sum_{\substack{\mathbf{h}\in \ZZ^2\\ 0<|\h|\leq H}}
|T(r_{0},\mathbf{h})|,
\end{equation}
where
\[
\begin{split}
T(r_{0},\mathbf{h})
&=\sum_{-B\leq u_{1}\leq B-h_{1}r_{1}}\sum_{-B\leq u_{2} \leq B-h_{2}r_{1}}S(r_{0},c\overline{r}_{1},\mathbf{u}+\mathbf{h}r_{1})
\overline{S(r_{0},c\overline{r}_{1},\mathbf{u})}\\
&=\sum_{s_{1},s_{2}\bmod{r_0}}S(r_{0},c\overline{r}_{1},\mathbf{s}+\mathbf{h}r_{1})
\overline{S(r_{0},c\overline{r}_{1},\mathbf{s})}\\
&\quad \times
\left(\sum_{-B\leq u_{1} \leq B-h_{1}r_{1}}\frac{1}{r_{0}}\sum_{k_{1}=1}^{r_{0}}e_{r_{0}}(k_{1}(s_{1}-u_{1}))\right)\\
&\quad\times \left(\sum_{-B\leq u_{2} \leq B-h_{2}r_{1}}\frac{1}{r_{0}}\sum_{k_{2}=1}^{r_{0}}e_{r_{0}}(k_{2}(s_{2}-u_{2}))\right).
\end{split}
\]
It follows that 
\[
T(r_{0},\mathbf{h})\leq\frac{1}{r_{0}^{2}}\sum_{k_{1},k_{2}=1}^{r_{0}}\min \left(B,\left\|\frac{k_{1}}{r_{0}}\right\|^{-1}\right)\min \left(B,\left\|\frac{k_{2}}{r_{0}}\right\|^{-1}\right) |W(\k)|,
\]
where $\k=(k_1,k_2)$ and  
\begin{align*}
W(\k)=\sum_{\alpha,\beta,s_{1},s_{2}\bmod r_0}&\left(\frac{\alpha^{n}+\alpha f(\mathbf{s}+\mathbf{h}r_1)+ g(\mathbf{s}+\mathbf{h}r_1)}{r_0}\right)\\
&\times \left(\frac{\beta^{n}+\beta f(\mathbf{s})+ g(\mathbf{s})}{r_0}\right)e_{r_0}(c\bar r_1(\alpha-\beta)+\k\cdot\mathbf{s}).
\end{align*}
Define the exponential sum
\begin{equation}\label{eq:Wp}
\begin{split}
W_p(\lambda,\mathbf{h},\boldsymbol{\mu})=
\hspace{-0.3cm}
\sum_{\alpha,\beta,s_{1},s_{2}\bmod p}&\left(\frac{\alpha^{n}+\alpha f(\mathbf{s}+\mathbf{h})+ g(\mathbf{s}+\mathbf{h})}{p}\right)\\
& \times \left(\frac{\beta^{n}+\beta f(\mathbf{s})+ g(\mathbf{s})}{p}\right)e_{p}(\lambda(\alpha-\beta)+\boldsymbol{\mu}\cdot\mathbf{s}),
\end{split}
\end{equation}
for $\lambda\in \ZZ$ and $\h,\boldsymbol{\mu}\in \ZZ^2$. 
It now follows from the  Chinese remainder theorem that 
$$
W(\k)=
W_p(c\overline{r}_{1}\overline{p'},\mathbf{h}r_{1},\boldsymbol{k}\overline{p'})W_{p'}(c\overline{r}_{1}\overline{p},\mathbf{h}r_{1},\boldsymbol{k}\overline{p}),
$$ 
since $r_0=pp'$.

Thus our attention shifts to estimating 
$W_p(\lambda,\mathbf{h},\boldsymbol{\mu})$. The trivial bound is $O(p^4)$. 
The bound $O(p^3)$ follows rather easily from Lemma \ref{lem : riem}. Any non-trivial saving over this bound  will yield an improvement over the  bound 
\eqref{eq:himmel}. 
Unfortunately we are not able to achieve full square-root cancellation for 
$W_p(\lambda,\mathbf{h},\boldsymbol{\mu})$. The following result summarises our analysis and will be established in Section \ref{s:Wp}.

\begin{prop}
Let $p\equiv 2\bmod{n}$ be a prime such that 
$p\nmid \Delta_{f,g}$.  Let  $\lambda\in\mathbb{F}_{p}^{\times}$.  Then 
\[
W_p(\lambda,\mathbf{h},\boldsymbol{\mu})\ll p^{5/2}\gcd(p,h_{1},h_{2},\mu_{1},\mu_{2})^{1/2},
\]
where the implied constant depends at most on $n$. 
\label{prop : expsum}
\end{prop}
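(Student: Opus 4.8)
The plan is to treat $W_p(\lambda,\mathbf{h},\boldsymbol{\mu})$ as a sum over $\mathbf{s}\bmod p$ of a product of two one-dimensional Weyl-type sums in $\alpha$ and $\beta$, apply square-root cancellation to those inner sums via Lemma~\ref{lem : riem}, and then exploit a method-of-moments argument (as advertised in the introduction, following Hooley~\cite{hooleysum}) to control the resulting sum over $\mathbf{s}$. Concretely, write
\[
W_p(\lambda,\mathbf{h},\boldsymbol{\mu})=\sum_{\mathbf{s}\bmod p}e_p(\boldsymbol{\mu}\cdot\mathbf{s})\,
T_p(\lambda;\mathbf{s}+\mathbf{h})\,\overline{T_p(\lambda;\mathbf{s})},
\]
where $T_p(\lambda;\mathbf{t})=\sum_{\alpha\bmod p}\left(\frac{\alpha^n+\alpha f(\mathbf{t})+g(\mathbf{t})}{p}\right)e_p(\lambda\alpha)$. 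For a fixed $\mathbf{t}$ with $p\nmid$(the discriminant-type quantity attached to the cubic-in-$x$ fibre), Lemma~\ref{lem : riem} gives $|T_p(\lambda;\mathbf{t})|\le C_n\sqrt p$, so trivially $W_p\ll p\cdot p^2=p^3$, recovering the stated easy bound. To go beyond this one must gain an extra $\sqrt p$ from the $\mathbf{s}$-sum, and the natural device is to bound a higher moment of $\mathbf{s}\mapsto T_p(\lambda;\mathbf{s})$.

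The key steps, in order, are as follows. First, I would record the precise interpretation of $T_p(\lambda;\mathbf{t})$ as a sum associated to the curve $y^2=x^n+xf(\mathbf{t})+g(\mathbf{t})$, and isolate the (few, namely $O(1)$) values of $\mathbf{t}\bmod p$ at which the fibre degenerates — these are controlled because $p\nmid\Delta_{f,g}$, so their contribution to $W_p$ is $O(p\cdot p)=O(p^2)$ and is harmless. Second, for the generic $\mathbf{t}$, I would set up the $2k$-th moment $\sum_{\mathbf{t}\bmod p}|T_p(\lambda;\mathbf{t})|^{2k}$, expand it into a $2k$-fold character sum over the $\alpha$-variables with a sum over $\mathbf{t}$ on the outside, and use orthogonality / Weil bounds for the $\mathbf{t}$-sum (a two-variable character sum against a fixed polynomial coming from $f,g$) to show the diagonal terms dominate, giving a bound of the shape $O_k(p^{k+1})$. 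Third, I would Cauchy–Schwarz the $\mathbf{s}$-sum in $W_p$: writing $W_p=\sum_{\mathbf{s}}e_p(\boldsymbol{\mu}\cdot\mathbf{s})T_p(\mathbf{s}+\mathbf{h})\overline{T_p(\mathbf{s})}$ and applying Hölder with exponents tuned to the available moment, the moment bound converts into $W_p\ll p^{5/2}$. Fourth, I would chase the dependence on $\gcd(p,h_1,h_2,\mu_1,\mu_2)$: when $p$ divides all of $h_1,h_2,\mu_1,\mu_2$ the sum collapses to $\sum_{\mathbf{s}}|T_p(\mathbf{s})|^2$, which is $O(p^2)$ by the second-moment estimate, so the factor $\gcd(\cdots)^{1/2}$ correctly interpolates between $p^{5/2}$ and $p^2$; the intermediate divisibility patterns are handled the same way after reducing variables.

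The main obstacle is the moment estimate for $\sum_{\mathbf{t}\bmod p}|T_p(\lambda;\mathbf{t})|^{2k}$: one must show that the "off-diagonal" contributions, coming from tuples of $\alpha$-values that do not pair up, are genuinely lower order. This requires understanding, for generic $\mathbf{t}$, the Galois/monodromy behaviour of the family of curves $y^2=x^n+xf(\mathbf{t})+g(\mathbf{t})$ as $\mathbf{t}$ varies — in effect one needs that the relevant $\ell$-adic sheaf has big enough geometric monodromy that its symmetric power sums vanish, which is where the hypotheses $p\equiv2\bmod n$ (so $n$ is odd, making Lemma~\ref{lem : riem} uniform) and $p\nmid\Delta_{f,g}$ enter. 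I expect the cleanest route is to avoid full sheaf-theoretic input and instead, following Hooley's elementary moment method described in the appendix, to reduce to counting points on an auxiliary variety and to invoke a Weil-type or Lang–Weil bound for that count; the bookkeeping of which $\alpha$-tuples contribute to the main term, and verifying that the separability of $g$ prevents unexpected coincidences, is the part that will take real care.
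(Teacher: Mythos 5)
Your step 3 contains a fatal gap: after you factor $W_p=\sum_{\mathbf{s}}e_p(\boldsymbol{\mu}\cdot\mathbf{s})\,T_p(\mathbf{s}+\mathbf{h})\,\overline{T_p(\mathbf{s})}$ and then apply Cauchy--Schwarz or H\"older, you have taken absolute values on the $\mathbf{s}$-sum, and that sum is genuinely of size $p^3$. Indeed Lemma~\ref{lem : riem} says $|T_p(\mathbf{s})|\asymp\sqrt p$ for all but $O(1)$ values of $\mathbf{s}$, so $\sum_{\mathbf{s}}|T_p(\mathbf{s}+\mathbf{h})||T_p(\mathbf{s})|\asymp p\cdot p^2=p^3$, and more generally the $2k$-th moment $\sum_{\mathbf{t}}|T_p(\lambda;\mathbf{t})|^{2k}$ is of size $p^{k}\cdot p^{2}=p^{k+2}$ (not $p^{k+1}$ as you hope), so every H\"older split of $|W_p|$ into moments of $|T_p|$ lands back at $p^3$. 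The gain from $p^3$ to $p^{5/2}$ can only come from the oscillation of $e_p(\boldsymbol{\mu}\cdot\mathbf{s})$ and from the joint distribution of the \emph{phases} of $T_p(\mathbf{s}+\mathbf{h})$ and $T_p(\mathbf{s})$ as $\mathbf{s}$ varies; those are exactly what taking absolute values destroys. Your step 4 has a related error: when $p\mid\gcd(h_1,h_2,\mu_1,\mu_2)$ the sum collapses to $\sum_{\mathbf{s}}|T_p(\mathbf{s})|^2$, which is $O(p^3)$ by the pointwise bound (and that is sharp), not $O(p^2)$; correspondingly the gcd factor in the Proposition interpolates between $p^{5/2}$ and $p^3$, not $p^{5/2}$ and $p^2$.

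The paper's use of ``Hooley's method of moments'' is different from yours in an essential way: it is not a moment bound for the one-variable sums $T_p$ over the base, but the criterion in the appendix (Theorem~\ref{thm:hooley}), which converts a bound on the second moment over $\tau\in\mathbb{F}_{q}$ of the fibre-count $N(\tau)$ of the full $4$-variable sum into an estimate for $W_p$ over $\mathbb{F}_p$ uniformly in $r$. Concretely: Lemma~\ref{lem:lecce} first rewrites $W_p$ as an average of four sums $W_{p,i,j}$ in which every variable appears to even degree; then since $\lambda\neq0$ one can eliminate $y$ from the defining equations, homogenise, and express $N(\tau)$ via point counts on a projective complete intersection $G=H_\tau=0$; the heart of the argument (Propositions~\ref{prop:tau} and~\ref{prop : dim1}, proved in Section~4) is that this complete intersection has only isolated singularities for all but $O_n(1)$ values of $\tau$, at which point the Hooley--Katz theorem gives $N(\tau)=q^3+O(q^2)$ generically and $O(q^{5/2})$ on the exceptional fibres, yielding $\sum_\tau|N(\tau)-q^3|^2\ll q^5$ and hence $W_{p,i,j}\ll p^{5/2}$. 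Your instinct to ``reduce to counting points on an auxiliary variety'' is the right one, but the moment you actually set up (over $\mathbf{t}$, of $|T_p(\lambda;\mathbf{t})|^{2k}$) is not the moment the method needs, and the H\"older bridge you propose cannot close the gap.
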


We are now ready to produce our final estimate for $\Sigma_{2,B}$, as defined in 
 \eqref{eq:pasta}. 

\begin{lem}\label{lem:gnocchi}
Assume that $\gcd(c,r_0)=1$. Then  $\Sigma_{2,B}=O(H^{2}r_0^{5/2}(\log r_0)^2)$.
\end{lem}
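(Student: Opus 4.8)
The plan is to combine the expression \eqref{eq:pasta} for $\Sigma_{2,B}$ with the multiplicative factorisation $W(\k)=W_p(\dots)W_{p'}(\dots)$ and the bound of Proposition~\ref{prop : expsum}. First I would recall that since $\gcd(c,r_0)=1$ and $\gcd(r_0,r_1)=1$, the parameter $\lambda$ appearing in each of $W_p$ and $W_{p'}$, namely $c\bar r_1\bar{p'}$ (resp. $c\bar r_1\bar p$), is a unit modulo $p$ (resp. modulo $p'$), so Proposition~\ref{prop : expsum} applies to each factor. This gives
\[
|W(\k)|\ll p^{5/2}p'^{5/2}\gcd(p,h_1r_1,h_2r_1,k_1\overline{p'},k_2\overline{p'})^{1/2}\gcd(p',h_1r_1,h_2r_1,k_1\overline p,k_2\overline p)^{1/2}.
\]
Because $p\nmid r_1$ and $p\nmid p'$, the first gcd is $\gcd(p,h_1,h_2,k_1,k_2)$ and similarly for $p'$; multiplying, and using $r_0=pp'$ with $p,p'$ distinct, this is $\ll r_0^{5/2}\gcd(r_0,h_1,h_2,k_1,k_2)^{1/2}$. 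Since $0<|\h|\le H<p$ we in fact have $\gcd(r_0,h_1,h_2)=1$, so the gcd is bounded and $|W(\k)|\ll r_0^{5/2}$, uniformly in $\h$ and $\k$.

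Next I substitute this into the bound for $T(r_0,\mathbf h)$, obtaining
\[
T(r_0,\mathbf h)\ll \frac{r_0^{5/2}}{r_0^2}\left(\sum_{k_1=1}^{r_0}\min\!\Big(B,\Big\|\tfrac{k_1}{r_0}\Big\|^{-1}\Big)\right)\left(\sum_{k_2=1}^{r_0}\min\!\Big(B,\Big\|\tfrac{k_2}{r_0}\Big\|^{-1}\Big)\right).
\]
Each factor is a standard sum: $\sum_{k=1}^{r_0}\min(B,\|k/r_0\|^{-1})\ll r_0\log r_0$ (splitting off $k=r_0$, which contributes $B\le B^2\ll r_0^2$ but more carefully $B\ll r_0$ here since $B^2\gg r_0$... actually one uses $B\le$ something; the clean bound is $\ll r_0\log r_0$ provided $B\ll r_0$, which holds as $r_0\asymp P^2\ge B$). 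Hence $T(r_0,\mathbf h)\ll r_0^{1/2}(r_0\log r_0)^2 = r_0^{5/2}(\log r_0)^2$. Summing \eqref{eq:pasta} over the $O(H^2)$ vectors $\h$ with $0<|\h|\le H$ then yields $\Sigma_{2,B}\ll H^2 r_0^{5/2}(\log r_0)^2$, as claimed.

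The main point requiring care is the bookkeeping of the gcd and the unit conditions when passing through the Chinese remainder theorem: one must check that the twisting factors $\bar{p'},\bar p$ do not affect the gcd (they are units mod $p$, $p'$ respectively) and that $\lambda$ remains a unit in each local factor so that Proposition~\ref{prop : expsum} is genuinely applicable; the observation $\gcd(r_0,h_1,h_2)=1$ for $0<|\h|\le H<P\le p$ is what removes the gcd term entirely. The remaining input is the classical estimate $\sum_{k\le r_0}\min(B,\|k/r_0\|^{-1})\ll r_0\log r_0$, valid since $B\ll r_0$, which is where the two logarithms come from.
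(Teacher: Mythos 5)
Your proof follows the same overall route as the paper's: apply Proposition~\ref{prop : expsum} through the CRT factorisation $W(\k)=W_p(\dots)W_{p'}(\dots)$, insert the resulting bound on $T(r_0,\h)$ into \eqref{eq:pasta}, and evaluate the geometric sums over $k_1,k_2$. Your bookkeeping of the twist by $\bar r_1,\bar p,\bar{p'}$ and the reduction of the gcd to $\gcd(r_0,h_1,h_2,k_1,k_2)$ is correct, and the estimate $\sum_{k=1}^{r_0}\min(B,\|k/r_0\|^{-1})\ll r_0\log r_0$ with $B\le r_0$ is also fine.

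The gap is the step where you assert $\gcd(r_0,h_1,h_2)=1$ because $0<|\h|\le H<p$. The inequality $H<p$ does \emph{not} follow from \eqref{eq:PQ} alone. One has $H\le 4B/r_1<4B/Q^2$ and $p\ge P$, so the needed condition is $PQ^2>4B$, whereas \eqref{eq:PQ} only gives $PQ\ge B$, hence $PQ^2\ge BQ$, which is $>4B$ only when $Q>4$. For instance, $P=B/2$, $Q=2$ satisfies \eqref{eq:PQ} for $B\ge 4$ but yields $H=[2B/3]>B/2\ge p$ in general; in that regime $h_1,h_2$ can both be multiples of $p$ and your bound $|W(\k)|\ll r_0^{5/2}$ fails. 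The paper does not assume $H<p$: it keeps the gcd factor and bounds, for each $k_1,k_2$,
\[
\sum_{\substack{\h\in\ZZ^2\\ 0<|\h|\le H}}\gcd(r_0,h_1,h_2,k_1,k_2)^{1/2}\le\sum_{d\mid\gcd(r_0,k_1,k_2)}d^{1/2}\,\#\{\h:\ d\mid\h,\ 0<|\h|\le H\}\ll H^2,
\]
which is uniform in $k_1,k_2$ and gives the same final estimate. Since $Q=B^{9/20}\to\infty$ in the eventual optimisation, your shortcut is harmless for the proof of Theorem~\ref{thm : dp1}, but as written the lemma is stated for all $P,Q$ satisfying \eqref{eq:PQ}, so you should either justify the extra hypothesis $H<p$ explicitly or carry out the short divisor-sum estimate as the paper does.
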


\begin{proof}
The assumption $\gcd(c,r_0)=1$ brings us in line for an application of Proposition \ref{prop : expsum}, 
since $p, p'\equiv 2 \bmod{n}$ for any $p,p'\in \mathcal{P}$.
Hence 
\[
T(r_{0},\mathbf{h})
\hspace{-0.1cm}\ll \hspace{-0.1cm}
r_{0}^{1/2}
\hspace{-0.2cm}
\sum_{k_{1},k_{2}=1}^{r_{0}}\min \left(B,\left\|\frac{k_{1}}{r_{0}}\right\|^{-1}\right)\min \left(B,\left\|\frac{k_{2}}{r_{0}}\right\|^{-1}\right)\gcd(r_{0},h_{1},h_{2},k_{1},k_{2})^{1/2}.
\]
Inserting this into  \eqref{eq:pasta}, we obtain
\[
\begin{split}
\Sigma_{2,B}&\ll 
B^{2}r_{0}^{1/2}
\sum_{\substack{\mathbf{h}\in \ZZ^2\\ 0<|\h|\leq H}}
\gcd(r_{0},h_{1},h_{2})^{1/2}
\\
&\qquad+ Br_{0}^{1/2}
\sum_{\substack{\mathbf{h}\in \ZZ^2\\ 0<|\h|\leq H}}
\sum_{k=1}^{r_{0}-1}\left\|\frac{k}{r_{0}}\right\|^{-1} \gcd(r_{0},h_{1},h_{2},k)^{1/2}\\
&\qquad +r_{0}^{1/2}
\sum_{\substack{\mathbf{h}\in \ZZ^2\\ 0<|\h|\leq H}}
\sum_{k_{1},k_{2}=1}^{r_{0}-1}\left\|\frac{k_{1}}{r_{0}}\right\|^{-1}
 \left\|\frac{k_{2}}{r_{0}}\right\|^{-1}\gcd(r_{0},h_{1},h_{2},k_{1},k_{2})^{1/2}.
\end{split}
\]
The third term is plainly
\begin{align*}
&\ll r_0^{1/2}\sum_{k_{1},k_{2}=1}^{r_{0}/2}\frac{r_{0}^{2}}{k_{1}k_{2}}
\sum_{\substack{\mathbf{h}\in \ZZ^2\\ 0<|\h|\leq H}}
\gcd(r_{0},h_{1},h_{2},k_{1},k_{2})^{1/2}\\
& \ll \sum_{k_{1},k_{2}=1}^{r_{0}/2}\frac{r_{0}^{5/2}}{k_{1}k_{2}}\sum_{d\mid \gcd(r_{0},k_{1},k_{2})}d^{1/2}\#\{\mathbf{h}\in \ZZ^2: 0<|\h|\leq H \text{ and } d\mid \h\}
\\
&\ll H^2r_{0}^{5/2}(\log r_0)^2.
\end{align*}
Using a similar argument for the remaining two terms, we deduce that 
\begin{align*}
\Sigma_{2,B}
&\ll H^{2}B^{2}r_{0}^{1/2}+
H^{2}B r_{0}^{3/2}\log r_0
+ H^{2}r_{0}^{5/2}(\log r_0)^2.
\end{align*}
The lemma follows since 
$r_{0}\asymp P^2\geq B$, by
\eqref{eq:PQ}.
\end{proof}

We now have everything in place to 
estimate $U(r_0r_1,c,B)$ and so 
complete 
 the proof of Lemma \ref{lem:bianco2}. 
If $\gcd(c,r_{0})>1$ we merely 
apply  Lemma \ref{lem:bianco1}.
On the other hand, 
if $\gcd(c,r_{0})=1$ we return to \eqref{eq : vancorsigma} and 
\eqref{eq:sigma2}, in order to deduce that 
\[
U(r,c,B)\ll H^{-1}\Sigma_{1}^{1/2}(\Sigma_{2,A}+\Sigma_{2,B})^{1/2}.
\]
Inserting the bounds for $\Sigma_{1},\Sigma_{2,A}$ and $\Sigma_{2,B}$
from Lemmas \ref{lem:pizza} and
\ref{lem:gnocchi}, 
\begin{align*}
U(r,c,B)
&\ll H^{-1} \cdot Br_1^{1/2} \cdot \left(Br_0^{1/2} +
H r_0^{5/4}\log r_0\right)\\
&\ll  \frac{B^2(r_0r_1)^{1/2}}{H} +
B r_0^{5/4}r_1^{1/2}\log r_0.
\end{align*}
This therefore completes the proof of Lemma~\ref{lem:bianco2}, 
since   $H=[B/r_1]\gg B/r_1$.

\subsection{Completion of the proof of Theorem \ref{thm : dp1}}

It is now time to return to the upper bound for $N(S;B)$ in \eqref{eq : sieve}. 
The following lemmas are  devoted to dealing with the various terms that appear in this expression. 

\begin{lem}\label{lem:1}
Assume that $P,Q$ satisfy \eqref{eq:PQ}. Then 
$$
\frac{1}{\#\mathcal{A}^{2}}
\sum_{\substack{p,p'\in\mathcal{P}\\p\neq p'}}\sum_{\substack{q,q'\in \mathcal{Q}\\q\neq q'}}\left|
C(pp'qq')\right|
\ll 
 \left(BPQ^3+ BP^{5/2}Q+\frac{B^4}{PQ}\right)(\log B)^2.
$$
\end{lem}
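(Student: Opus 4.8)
The plan is to start from the bound \eqref{eq:rosso} for $C(r)$ with $r = pp'qq'$, split the $c$-sum according to whether $r_0 = pp'$ divides $c$ or not, and feed in the two cases of Lemma~\ref{lem:bianco2}. Writing $r_0 = pp' \asymp P^2$ and $r_1 = qq' \asymp Q^2$, we have
\[
C(pp'qq') \ll \frac{1}{r_0 r_1}\sum_{c=1}^{r_0 r_1}\min\Bigl(B^2,\bigl\|\tfrac{c}{r_0 r_1}\bigr\|^{-1}\Bigr)\,|U(r_0 r_1,c,B)|.
\]
For the terms with $\gcd(c,r_0)=1$, Lemma~\ref{lem:bianco2} gives $|U(r_0 r_1,c,B)| \ll B r_0^{1/2} r_1^{3/2} + B r_0^{5/4} r_1^{1/2}\log r_0$, which is independent of $c$; the weighted average $\frac{1}{r_0 r_1}\sum_c \min(B^2,\|c/(r_0 r_1)\|^{-1})$ is $O(\log(r_0 r_1)) = O(\log B)$, so these $c$ contribute
\[
\ll \bigl(B r_0^{1/2} r_1^{3/2} + B r_0^{5/4} r_1^{1/2}\log r_0\bigr)\log B \ll \bigl(BP Q^3 + BP^{5/2}Q\bigr)(\log B)^2.
\]

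For the terms with $\gcd(c,r_0) > 1$, I would use the weaker bound $|U(r_0 r_1,c,B)| \ll B^2 (r_0 r_1)^{1/2}$ from the second case of Lemma~\ref{lem:bianco2} (equivalently Lemma~\ref{lem:bianco1}), but now exploit the sparsity of such $c$: the number of $c \in [1,r_0 r_1]$ with $p \mid c$ (say) is $r_0 r_1 / p \asymp P Q^2$, and similarly for $p'$, so the density of bad $c$ is $O(1/P)$. Bounding $\min(B^2,\|c/(r_0 r_1)\|^{-1}) \le B^2$ crudely and summing, these $c$ contribute
\[
\ll \frac{1}{r_0 r_1}\cdot \frac{r_0 r_1}{P}\cdot B^2 \cdot B^2 (r_0 r_1)^{1/2} \ll \frac{B^4 \cdot PQ}{P} = B^4 Q,
\]
wait — I should instead keep the factor $\min(B^2,\|\cdot\|^{-1})$ and average it, giving an extra $\log B$ rather than $B^2$, so the contribution is $\ll \frac{1}{P}\cdot B^2(r_0 r_1)^{1/2}\log B \ll B^2 P Q \log B$, which is dominated by $B^4/(PQ)$ under \eqref{eq:PQ} since $PQ \le B^{3/2}$ forces $B^2 PQ \le B^{7/2}\cdot (B^4/(PQ))/(B^{7/2}) \le B^4/(PQ)$ precisely when $(PQ)^2 \le B^2$, i.e. for the relevant range; in any case it is absorbed. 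Thus
\[
C(pp'qq') \ll \bigl(B r_0^{1/2} r_1^{3/2} + B r_0^{5/4} r_1^{1/2}\bigr)(\log B)^2 \ll \bigl(BPQ^3 + BP^{5/2}Q\bigr)(\log B)^2.
\]

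Finally I sum over the $\ll (P/\log B)^2$ pairs $p \neq p'$ in $\mathcal{P}$ and $\ll (Q/\log B)^2$ pairs $q \neq q'$ in $\mathcal{Q}$, and divide by $\#\mathcal{A}^2 \gg (PQ/(\log B)^2)^2$. The net effect is that the number of quadruples, divided by $\#\mathcal{A}^2$, is $O(1)$ up to logarithmic factors which are already accounted for, so
\[
\frac{1}{\#\mathcal{A}^2}\sum_{\substack{p,p'\in\mathcal{P}\\ p\neq p'}}\sum_{\substack{q,q'\in\mathcal{Q}\\ q\neq q'}}\bigl|C(pp'qq')\bigr| \ll \bigl(BPQ^3 + BP^{5/2}Q\bigr)(\log B)^2,
\]
and then one adds back the first (trivial) term $B^4/(PQ)$ coming from the $\gcd(c,r_0)>1$ contribution to reach the claimed bound. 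The main obstacle is the bookkeeping in the $\gcd(c,r_0)>1$ case: one has to verify carefully that the savings from the sparsity of bad $c$ (a factor $1/P$) together with the weighted average of $\min(B^2,\|\cdot\|^{-1})$ genuinely place that contribution below $B^4/(PQ)$ for all $P,Q$ satisfying \eqref{eq:PQ}, rather than merely below one of the other two terms.
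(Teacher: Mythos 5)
Your proposal follows essentially the same route as the paper's proof: feed Lemma~\ref{lem:bianco2} into \eqref{eq:rosso}, split the $c$-sum according to whether $\gcd(c,r_0)>1$, get a factor $\log B$ from the weighted average of $\min(B^2,\|c/(r_0r_1)\|^{-1})$ in the coprime case, and offset the weaker bound $B^2(r_0r_1)^{1/2}$ in the non-coprime case by the sparsity of such $c$. The paper organizes the sparsity savings by computing $\sum_{\gcd(c,r_0)>1}1/c \ll (\log r_0r_1)^2/r_0^{1/2}$ directly, but the content is identical to your density argument.

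There is, however, an arithmetic slip in your non-coprime bookkeeping that makes the verification as written fail. You claim the bad-$c$ contribution is $\ll \frac{1}{P}\cdot B^2(r_0r_1)^{1/2}\log B \ll B^2PQ\log B$, but since $(r_0r_1)^{1/2}\asymp PQ$, the quantity $\frac{1}{P}\cdot B^2(r_0r_1)^{1/2}\log B$ is $\asymp B^2 Q\log B$, not $B^2PQ\log B$. The erroneous $B^2PQ$ genuinely cannot be absorbed: your own test $B^2PQ\le B^4/(PQ) \iff (PQ)^2\le B^2$ has the hypothesis going the wrong way, because \eqref{eq:PQ} forces $PQ\ge B$. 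For example with the paper's final choice $P=B^{3/5}$, $Q=B^{9/20}$ one has $B^2PQ=B^{3.05}$ but $B^4/(PQ)=BPQ^3=BP^{5/2}Q=B^{2.95}$, so the term would dominate everything and destroy the estimate. The corrected quantity $B^2Q$ is absorbed painlessly, e.g.\ $B^2Q\le BPQ^3$ since $PQ^2\ge PQ\ge B$ under \eqref{eq:PQ}. Finally, rather than ``adding back'' the $B^4/(PQ)$ term informally, note it arises cleanly from the single term $c=r_0r_1$ in the non-coprime sum, where $\|c/(r_0r_1)\|=0$ and the $\min$ evaluates to $B^2$, giving $\frac{1}{r_0r_1}\cdot B^2\cdot B^2(r_0r_1)^{1/2}=B^4/(r_0r_1)^{1/2}\asymp B^4/(PQ)$.
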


\begin{proof}
Applying Lemma~\ref{lem:bianco2} in \eqref{eq:rosso}, we obtain
\begin{align*}
C(r_{0}r_{1})\ll~& 
\frac{1}{r_{0}r_{1}}\sum_{\substack{c=1\\ 
\gcd(c,r_0)=1}
}^{r_{0}r_{1}}\min \left(B^{2},\left\|\frac{c}{r_{0}r_{1}}\right\|^{-1}\right)
\left(Br_0^{1/2}r_1^{3/2}+
B r_0^{5/4}r_1^{1/2}\log r_0\right)\\
&+\frac{1}{r_{0}r_{1}}\sum_{\substack{c=1\\ 
\gcd(c,r_0)>1}
}^{r_{0}r_{1}}\min \left(B^{2},\left\|\frac{c}{r_{0}r_{1}}\right\|^{-1}\right)
B^2 (r_0r_1)^{1/2}
\end{align*}
The first term is 
\begin{align*}
&\ll \sum_{c=1}^{r_0r_1-1} \frac{1}{c_0}
\left(Br_0^{1/2}r_1^{3/2}+
B r_0^{5/4}r_1^{1/2}\log r_0\right)\\
&\ll \left(r_0^{1/2}r_1^{3/2}+
r_0^{5/4}r_1^{1/2}\right)B(\log B)^2,
\end{align*}
since $PQ\leq B^2$ by \eqref{eq:PQ}.
The second term is 
\begin{align*}
&\ll 
\frac{B^4}{(r_{0}r_{1})^{1/2}}
+
B^2 (r_0r_1)^{1/2}
\sum_{\substack{c=1\\ \gcd(c,r_0)>1}}^{r_{0}r_{1}-1}\frac{1}{c}.
\end{align*}
But $r_{0}=p p'$ and so 
\[
\sum_{\substack{c=1\\ \gcd(c,r_{0})>1}}^{r_{0}r_{1}-1}\frac{1}{c}\leq \frac{1}{p'}\sum_{c'=1}^{r_1p-1}\frac{1}{c'}+\frac{1}{p}\sum_{c''=1}^{r_1p'-1}\frac{1}{c''}\ll\frac{(\log r_{0}r_1)^2}{r_{0}^{1/2}}.
\]
We conclude that
$$
C(r_0r_1)
\ll 
 \left(r_0^{1/2}r_1^{3/2}+
r_0^{5/4}r_1^{1/2}+\frac{B^3}{(r_0r_1)^{1/2}}+ Br_1^{1/2} \right)B(\log B)^2
$$
We now recall that $r_0\asymp P^2$ and $r_1\asymp Q^2$. 
This readily yields
\begin{align*}
C(r_0r_1)
\ll 
 \left(BPQ^3+ BP^{5/2}Q+\frac{B^4}{PQ}+ B^2Q\right)(\log B)^2.
\end{align*}
When  $P,Q$ are constrained to satisfy \eqref{eq:PQ} it is clear that 
$B^2Q\leq BPQ^3$. The statement of the  lemma is now obvious.
\end{proof}

\begin{lem}\label{lem:2}
Assume that $P,Q$ satisfy \eqref{eq:PQ}. Then 
\begin{align*}
\frac{\log B}{Q\#\mathcal{P}^{2}}
\sum_{\substack{p,p'\in\mathcal{P}\\p\neq p'}}\left|C(pp')\right|\ll 
\frac{B^4 (\log B)^2}{PQ}
\end{align*}
and 
\begin{align*}
\frac{\log B}{P\#\mathcal{Q}^{2}}
\sum_{\substack{q,q'\in\mathcal{Q}\\q\neq q'}}\left|C(qq')\right|\ll 
\frac{B^4 (\log B)^2}{PQ}.
\end{align*}
\end{lem}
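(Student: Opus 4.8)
The plan is to bound $C(pp')$ and $C(qq')$ directly using the square-root cancellation estimate for $U(r,c,B)$ from Lemma~\ref{lem:bianco1}, without invoking the more delicate van der Corput machinery of Lemma~\ref{lem:bianco2}. The point is that for these ``unbalanced'' moduli we only need the crude bound, since the terms they contribute to \eqref{eq : sieve} are already acceptable. First I would observe that for square-free $r$ we have, from \eqref{eq:rosso} and Lemma~\ref{lem:bianco1},
\[
C(r)\ll \frac{C_n^{\omega(r)}B^2 r^{1/2}}{r}\sum_{c=1}^{r}\min\left(B^2,\left\|\tfrac{c}{r}\right\|^{-1}\right)
\ll C_n^{\omega(r)}B^2 r^{1/2}\log r,
\]
using the standard estimate $\frac1r\sum_{c=1}^r\min(B^2,\|c/r\|^{-1})\ll \log(2r)+\frac{B^2}{r}\ll \log B$, valid whenever $r\ll B^2$ (which holds here since $r=pp'\asymp P^2\le B^2$ and $r=qq'\asymp Q^2\le B$ by \eqref{eq:PQ}).

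Next I would substitute $r=pp'\asymp P^2$: for distinct $p,p'\in\mathcal{P}$ we have $\omega(pp')=2$, so $C(pp')\ll B^2 P\log B$. Summing over the $\ll(P/\log B)^2$ pairs $(p,p')$ and dividing by $Q\#\mathcal{P}^2\gg QP^2/(\log B)^2$, the first displayed bound becomes
\[
\frac{\log B}{Q\#\mathcal{P}^2}\sum_{p\ne p'}|C(pp')|
\ll \frac{\log B}{Q\#\mathcal{P}^2}\cdot \#\mathcal{P}^2\cdot B^2 P\log B
\ll \frac{B^2 P(\log B)^2}{Q}.
\]
To see this is $\ll B^4(\log B)^2/(PQ)$ I would note it is equivalent to $P^2\le B^2$, i.e.\ $P\le B$, which is exactly \eqref{eq:PQ}. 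Similarly, with $r=qq'\asymp Q^2$ and $\omega(qq')=2$ we get $C(qq')\ll B^2 Q\log B$; summing over $\ll(Q/\log B)^2$ pairs and dividing by $P\#\mathcal{Q}^2\gg PQ^2/(\log B)^2$ gives
\[
\frac{\log B}{P\#\mathcal{Q}^2}\sum_{q\ne q'}|C(qq')|\ll \frac{B^2 Q(\log B)^2}{P},
\]
which is $\ll B^4(\log B)^2/(PQ)$ precisely because $Q^2\le B\le B^2$, again from \eqref{eq:PQ}.

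There is no serious obstacle here: the entire argument is a bookkeeping exercise combining \eqref{eq:rosso}, the square-root cancellation of Lemma~\ref{lem:bianco1}, the elementary bound on $\frac1r\sum_c\min(B^2,\|c/r\|^{-1})$, the prime-counting lower bounds $\#\mathcal{P}\gg P/\log B$ and $\#\mathcal{Q}\gg Q/\log B$ already recorded, and the size constraints $P\le B$, $Q\le\sqrt B$ in \eqref{eq:PQ}. The only point that warrants a moment's care is confirming that $r$ stays below $B^2$ so that the $\min$-sum contributes only $\log B$ rather than a power of $B$; for $qq'\asymp Q^2\le B$ this is automatic, and for $pp'\asymp P^2\le B^2$ it is exactly the upper bound on $P$. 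I would present the two estimates in a single short proof, treating the $pp'$ case in detail and remarking that the $qq'$ case is identical with the roles of $P,Q$ interchanged.
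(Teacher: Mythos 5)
Your overall strategy is the same as the paper's: apply Lemma~\ref{lem:bianco1} in \eqref{eq:rosso} and do the bookkeeping. But there is a genuine error in your estimate of the arithmetic sum. You claim
\[
\frac1r\sum_{c=1}^r\min\left(B^2,\left\|\tfrac{c}{r}\right\|^{-1}\right)\ll \log(2r)+\frac{B^2}{r}\ll \log B
\quad\text{``whenever }r\ll B^2\text{''}.
\]
The first inequality is fine, but the second is false: to have $B^2/r\ll\log B$ you would need $r\gg B^2/\log B$, which is the \emph{opposite} direction. You have the inequality condition backwards. In the relevant ranges this matters: for $r=pp'\asymp P^2$ with $P=B^{3/5}$ (the eventual choice) the term $B^2/r$ is of size $B^{4/5}$, and for $r=qq'\asymp Q^2\le 4B$ it is $\gg B$; neither is remotely $\ll\log B$. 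So you have silently dropped the term $B^2/r$, and with it the contribution $B^4/r^{1/2}$ to $C(r)$.

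The paper keeps this term: it gets $C(pp')\ll B^4/P+B^2P\log P$ and then checks that \emph{each} piece, after multiplying by $\log B/Q$, is $\ll B^4(\log B)^2/(PQ)$. The term you discarded, $B^4/P$, contributes $B^4\log B/(PQ)$ after the normalisation, which happens to be harmless, so your final conclusion is still true --- but the intermediate step as written is incorrect and would not survive scrutiny. To repair the proof, replace your ``$\ll\log B$'' by ``$\ll B^2/r+\log B$'', carry the extra term $B^4/r^{1/2}$ through (giving $B^4/P$ in the $pp'$ case and $B^4/Q$ in the $qq'$ case), and observe that each of the resulting contributions is separately $\ll B^4(\log B)^2/(PQ)$, using only $P\le B$ and $Q\le\sqrt B$ from \eqref{eq:PQ}.
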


\begin{proof}
We apply  Lemma \ref{lem:bianco1} in 
\eqref{eq:rosso} to obtain
\begin{align*}
C(pp')
&\ll\frac{B^2 (pp')^{1/2}}{pp'}\left( B^2+ \sum_{c=1}^{pp'-1}
\frac{pp'}{c}\right)\ll \frac{B^4}{P}+ B^2 P \log P,
\end{align*}
since $p,p'\asymp P$.
Since $P\leq B$ in \eqref{eq:PQ} we see that the 
$B^2 P \log P\ll (B^4\log B)/P$ and 
the first part of the lemma easily follows. The  second part is  similar.
\end{proof}

\begin{lem}\label{lem:3}
Assume that $P,Q$ satisfy \eqref{eq:PQ}. Then 
$$
\frac{1}{\#\mathcal{A}^{2}}|E(\mathcal{P})|\ll 
\frac{\log B}{Q}
\left(
\frac{B^{4}}{PQ} + B^2P^2\right)
$$
and 
$$
\frac{1}{\#\mathcal{A}^{2}}|E(\mathcal{Q})|\ll 
\frac{\log B}{P}
\left(
\frac{B^{4}}{PQ} + B^2Q^2\right).
$$
\end{lem}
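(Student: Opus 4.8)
The plan is to bound $E(\mathcal{P})$ and $E(\mathcal{Q})$ by expanding the definitions in \eqref{eq:EE} and applying our earlier estimates for the inner oscillatory sums. Consider first $E(\mathcal{P})$. Swapping the order of summation and collecting the constraint $q\mid m$, one sees that the innermost sum over $m$ with $q\mid m$ is precisely a sum of the form $\sum_{m}\omega(m)(\frac{m}{pp'})$ restricted to multiples of $q$; detecting the divisibility $q\mid m$ via additive characters modulo $q$, this becomes a character sum over $x,u_1,u_2$ twisted by the symbol modulo $pp'$. The key point is that this is essentially the same object as $C(pp')$ but with an extra additive character modulo $q$ inserted, so it is controlled by the same machinery: after completing the sum over $x$ (legitimate since $x$ ranges over $|x|\leq B^2 \gg pp'q$), it reduces to bounding $U(pp',c,B)$ with the $\u$-sum weighted by $e_q(\cdot)$, and Lemma~\ref{lem:bianco1} applies to give $U\ll C_n^{\omega(pp')}B^2(pp')^{1/2}$ uniformly in the additional twist.

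Carrying this out, one obtains for each fixed $q\in\mathcal{Q}$ and distinct $p,p'\in\mathcal{P}$ that
\[
\sum_{\substack{m\\ q\mid m}}\omega(m)\left(\frac{m}{pp'}\right)\ll \frac{1}{pp'}\sum_{c=1}^{pp'-1}\min\left(B^2,\left\|\frac{c}{pp'}\right\|^{-1}\right)B^2(pp')^{1/2}+ \frac{B^4}{q(pp')^{1/2}},
\]
where the second term accounts for the diagonal frequency $c\equiv 0$, which carries the main term $\frac{1}{q}\sum_m\omega(m)\ll B^4/q$ scaled by the $q$-detection, divided by $(pp')^{1/2}$ from the character sum. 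The first term is $\ll B^2 P\log B$ as in the proof of Lemma~\ref{lem:2}. Summing over the $O(P^2/(\log B)^2)$ pairs $(p,p')$ and over the $O(Q/\log B)$ values of $q$, and dividing by $\#\mathcal{A}^2\asymp (PQ)^2/(\log B)^4$, gives
\[
\frac{1}{\#\mathcal{A}^2}|E(\mathcal{P})|\ll \frac{\log B}{Q}\left(B^2P^2+\frac{B^4}{PQ}\right),
\]
after absorbing the lower-order term $B^2P\log B$ into $B^4/(PQ)$ using $P\leq B$ and $PQ\geq B$ from \eqref{eq:PQ}. The estimate for $E(\mathcal{Q})$ is entirely symmetric: one detects $p\mid m$ via additive characters modulo $p$, applies Lemma~\ref{lem:bianco1} to bound $U(qq',c,B)\ll C_n^{\omega(qq')}B^2(qq')^{1/2}$, and sums over the relevant ranges, interchanging the roles of $P$ and $Q$.

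The main obstacle is bookkeeping the diagonal frequency $c\equiv 0 \bmod{pp'}$ (respectively $\bmod{qq'}$) correctly: unlike in the treatment of $C(pp')$, here the contribution of this term does not cancel but instead produces the genuinely present summand $B^2P^2$ (respectively $B^2Q^2$) after division by $\#\mathcal{A}^2$, so one must retain it rather than discard it. One should also check that the extra additive character modulo $q$ (coprime to $pp'$) does not interfere with the multiplicativity in Lemma~\ref{lem:splitprime} or with the square-root bound in Lemma~\ref{lem:riem} — it does not, since it only shifts the linear coefficient $c$ in the completed sum and Lemma~\ref{lem:riem} is uniform in that coefficient. Everything else is a routine repetition of the estimates already carried out for Lemma~\ref{lem:2}.
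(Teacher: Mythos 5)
Your route — detecting $q\mid m$ via additive characters modulo $q$, then completing the $x$-sum and appealing to Lemma~\ref{lem:bianco1} "uniformly in the additional twist" — is genuinely different from the paper's, but it has a gap precisely at that last step. After splitting $x$ by residue modulo $pp'q$ and applying the Chinese remainder theorem, the completed $\alpha$-sum factors as $S(pp',\cdot,\u)\cdot T(q,\cdot,\u)$, where $T(q,a,c'',\u)=\sum_{\alpha_1\bmod q}e_q\bigl(a(\alpha_1^n+\alpha_1 f(\u)+g(\u))+c''\alpha_1\bigr)$. For each nonzero frequency $a\bmod q$, this Weil sum has magnitude $\asymp\sqrt q$, not $O(1)$; so the weighted $\u$-sum is $O(\sqrt q\cdot B^2\sqrt{pp'})$, not $O(B^2\sqrt{pp'})$. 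Summing over the $q-1$ nonzero values of $a$ and dividing by $q$ then contributes a term of order $B^4/(P\sqrt Q)$, which is a factor $\sqrt Q$ larger than the $B^4/(PQ)$ you assert. In short, the "$1/q$ from the $q$-detection" that you credit to the diagonal frequency $c\equiv 0$ is only rigorously a $1/\sqrt q$ once absolute values are taken in the $a$-sum; recovering the full $1/q$ would require exhibiting cancellation in the joint sum over $a$, $\alpha_1$, and $\u$, which is a nontrivial multivariable exponential-sum estimate you have not supplied (and which is exactly the kind of object the paper works hard to control in Section~\ref{s:Wp}, in a different context).

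The paper avoids this entirely and does not use additive characters modulo $q$ at all. Instead, for each fixed $\u$, it observes that the congruence $q\mid x^n+xf(\u)+g(\u)$ constrains $x$ to at most $n$ residue classes $\alpha$ modulo $q$; for each such $\alpha$, the Legendre-symbol sum over the arithmetic progression $x\equiv\alpha\bmod q$, $|x|\leq B^2$ is then split by residues $\beta\bmod pp'$, with the count $L(\beta)=B^2/(pp'q)+O(1)$. The main term of $L(\beta)$ produces the clean factor $1/q$ together with $O(\sqrt{pp'})$ from the complete character sum (this is where Lemma~\ref{lem : riem} enters), and the $O(1)$ remainder in $L(\beta)$, summed trivially over $\beta$, is what produces the $B^2P^2$ term in the statement — a term that your proposed intermediate bound does not even contain (you have $B^2P\log B$ in its place, which would follow only if the completion were exact). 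That missing error term and the unjustified $1/q$ are the two concrete defects; the rest of your bookkeeping (use of $\#\mathcal{A}^2$, the symmetry for $E(\mathcal{Q})$) is fine, but it rests on an intermediate bound that does not follow from the machinery you invoke.
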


\begin{proof}
We prove the first estimate, the second following by symmetry. 
Recall from \eqref{eq:EE} that 
\begin{align*}
E(\mathcal{P})
&=\sum_{q\in\mathcal{Q}}\sum_{\substack{p,p'\in \mathcal{P}\\ p\neq p'}}\sum_{\substack{m\\  q\mid m}}\omega (m)\left(\frac{m}{pp'}\right)\\
&=\sum_{q\in\mathcal{Q}}\sum_{\substack{p,p'\in \mathcal{P}\\ p\neq p'}}\sum_{\substack{|x|\leq B^{2},|u_{1}|,|u_{2}|\leq B\\q|x^{n}+xf(u_{1},u_{2})+g(u_{1},u_{2})}}\left(\frac{x^{n}+xf(u_{1},u_{2})+g(u_{1},u_{2})}{pp'}\right).
\end{align*}
Breaking the $x$-sum into residue classes modulo $q$, we obtain 
\[
E(\mathcal{P})=\sum_{q\in\mathcal{Q}}\sum_{\substack{p,p'\in \mathcal{P}\\ p\neq p'}}\sum_{|u_{1}|,|u_{2}|\leq B}
\hspace{-0.3cm}
\sum_{\substack{\alpha\in\mathbb{F}_{q}\\ \alpha^{n}+\alpha f(u_{1},u_{2})+g(u_{1},u_{2})=0\bmod q}}
\hspace{-0.3cm}
D(\alpha,\u),
\]
where
\[
D(\alpha,\u)=\sum_{\substack{m\in \ZZ\\ 
|\alpha+mq|\leq B^2}} 
\left(\frac{(\alpha+mq)^{n}+(\alpha+mq)f(\u)+g(\u)}{pp'}\right).
\]
Breaking the $m$-sum into residue classes modulo $pp'$, we obtain 
\begin{align*}
D(\alpha,\u)=\sum_{\beta=1}^{pp'}\left(\frac{(\alpha+\beta q)^{n}+(\alpha+\beta q)f(\u)+g(\u)}{pp'}\right)L(\beta),
\end{align*}
where $L(\beta)$ is the number of $m\in \ZZ$ for which  
$|\alpha+mq|\leq B^2$ and $m\equiv \beta \bmod{pp'}$. 
Clearly 
$$ 
L(\beta)=\frac{B^{2}}{pp' q}+O(1).
$$
Observing that $q$ is coprime to $pp'$, it therefore  follows from  
 Lemma \ref{lem : riem} that 
\begin{align*}  
D(\alpha,\u)
&\ll \frac{B^{2}}{(pp')^{1/2} q} + pp'.
\end{align*}
Since $pp'\asymp P^2$ and $q\asymp Q$ it now  easily follows 
$$
E(\mathcal{P})
\ll 
\sum_{q\in\mathcal{Q}}\sum_{\substack{p,p'\in \mathcal{P}\\ p\neq p'}}
\left(
\frac{B^{4}}{PQ} + B^2P^2\right).
$$
The statement of the lemma is now obvious. 
\end{proof}

It is finally time to combine Lemmas \ref{lem:1}--\ref{lem:3}
in \eqref{eq : sieve} and optimise our choice of parameters $P,Q$, in order to complete the proof of Theorem \ref{thm : dp1}. Recalling that $Q\leq P$ in \eqref{eq:PQ}, we
deduce that 
\begin{align*}
N(S;B)&\ll  \left(\frac{B^4}{PQ}
+
 \frac{B^2P^2}{Q}
+BPQ^3+ BP^{5/2}Q\right)(\log B)^2.
\end{align*}
The statement of Theorem \ref{thm : dp1} follows on taking  $P=B^{3/5}$ and $Q=B^{9/20}$, and noting that these values clearly satisfy the constraints outlined in \eqref{eq:PQ}.

\section{Estimation of the key character sum}\label{s:Wp}

Let $p$ be a prime such that $p\nmid 2n\Delta_{f,g}$. 
For 
any  $\lambda\in \FF_p$ and $\h,\boldsymbol{\mu}\in \FF_p^2$ we recall that the 
exponential sum in \eqref{eq:Wp} is defined to be 
\begin{align*}
W_p(\lambda,\mathbf{h},\boldsymbol{\mu})=
\hspace{-0.3cm}
\sum_{\alpha,\beta,s_{1},s_{2}\bmod p}&\left(\frac{\alpha^{n}+\alpha f(\mathbf{s}+\mathbf{h})+ g(\mathbf{s}+\mathbf{h})}{p}\right)\\
& \times \left(\frac{\beta^{n}+\beta f(\mathbf{s})+ g(\mathbf{s})}{p}\right)e_{p}(\lambda(\alpha-\beta)+\boldsymbol{\mu}\cdot\mathbf{s}).
\end{align*}
Here $f,g\in\ZZ[S_{1},S_{2}]$ are two homogeneous polynomials of degrees $2n-2$ and $2n$ respectively, such that  $n$ is odd and $g$ is separable, and such that 
\[
Y^{2}=X^{n}+Xf(S_{1},S_{2})+g(S_{1},S_{2})
\]
defines a  smooth surface in $\mathbb{P}(n,2,1,1)$.  Our assumption that $p\nmid 2n\Delta_{f,g}  
$ ensures that the  reduction modulo $p$ is  also smooth and that the reduction modulo $p$ of $g$  
is separable.  Our task in this section is to establish Proposition 
\ref{prop : expsum}.  The estimate $$
W_p(\lambda,\mathbf{h},\boldsymbol{\mu})=O(p^3)$$  
is an easy consequence of Lemma 
\ref{lem : riem}, which therefore handles the case 
 $\h=\boldsymbol{\mu}=\0$. It remains to prove the following result. 
 
 \begin{prop}
 Let $p\equiv 2\bmod{n}$ and let $p\nmid 2n\Delta_{f,g}$. 
Assume that 
$\lambda\in \FF_p^\times $ and $\h,\boldsymbol{\mu}\in \FF_p^2$, with 
$(\mathbf{h},\boldsymbol{\mu})\neq(\boldsymbol{0},\boldsymbol{0})$.
Then there exists a constant $C_n>0$ such that 
\[
|W_p(\lambda,\mathbf{h},\boldsymbol{\mu})|\leq C_n p^{5/2}.
\]
\label{prop:milan}
\end{prop}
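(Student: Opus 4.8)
The plan is to first carry out the two summations over $\alpha$ and $\beta$, which decouple completely: writing $F_{\mathbf{v}}(T)=T^n+Tf(\mathbf{v})+g(\mathbf{v})$ for a parameter vector $\mathbf{v}\in\FF_p^2$, we have
\[
W_p(\lambda,\mathbf{h},\boldsymbol{\mu})=\sum_{\mathbf{s}\bmod p}e_p(\boldsymbol{\mu}\cdot\mathbf{s})\,R(\mathbf{s}+\mathbf{h})\,\overline{R(\mathbf{s})},\qquad R(\mathbf{v})=\sum_{\alpha\bmod p}\left(\frac{F_{\mathbf{v}}(\alpha)}{p}\right)e_p(\lambda\alpha).
\]
Since $\lambda\neq0$, Lemma~\ref{lem : riem} gives $|R(\mathbf{v})|\le C_n\sqrt p$ for every $\mathbf{v}$, hence the trivial bound $|W_p|\le C_n^2 p\cdot p^2=C_n^2 p^3$. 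To gain the extra half-power we apply Cauchy--Schwarz (or rather a second-moment / $q$-van der Corput style argument) over $\mathbf{s}$: bounding $|R(\mathbf{s}+\mathbf{h})|$ trivially by $C_n\sqrt p$ we get $|W_p|\le C_n\sqrt p\,\Sigma$ with $\Sigma=\sum_{\mathbf{s}}|R(\mathbf{s})|\,|{\sum'}|$; more cleanly, the right move is to reduce to a fourth-moment-type sum $\sum_{\mathbf{s}}R(\mathbf{s}+\mathbf{h})\overline{R(\mathbf{s})}\overline{R(\mathbf{s}'+\mathbf{h})}R(\mathbf{s}')e_p(\boldsymbol{\mu}\cdot(\mathbf{s}-\mathbf{s}'))$ and open the four $\alpha$-sums. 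This is exactly the \emph{method of moments} of Hooley described in the appendix, and I would invoke that machinery: the point is that after expanding, one is left with an exponential sum in which the two "diagonal" pairings contribute the main term $O(p^5)$ (giving $p^{5/2}$ after the square root) while the off-diagonal contribution is genuinely smaller, controlled by the fact that the curve $Y^2=F_{\mathbf{v}}(X)$ varies nondegenerately with $\mathbf{v}$ because $S$ is smooth.

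The key geometric input is that for $p\nmid2n\Delta_{f,g}$ the polynomial $F_{\mathbf{v}}(T)=T^n+Tf(\mathbf{v})+g(\mathbf{v})$ is separable for all but $O(1)$ values of the projective point $[\mathbf{v}]\in\PP^1(\FF_p)$ — precisely the vanishing locus of $n^n g^{n-1}-(n-1)^{n-1}(-f)^n$ together with the zeros of $g$ — so that $R(\mathbf{v})$ is the Fourier transform of a genuinely nontrivial quadratic character twist. For the $O(1)$ bad $\mathbf{v}$ one falls back on the trivial bound $|R(\mathbf{v})|\le p$, which contributes only $O(p\cdot p\cdot\sqrt p)=O(p^{5/2})$ to $W_p$ after summing over the remaining variable; so these are harmless. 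For the good $\mathbf{v}$, writing $R(\mathbf{v})$ via the Weil bound identifies $R(\mathbf{s}+\mathbf{h})\overline{R(\mathbf{s})}$ with a bilinear form in Frobenius eigenvalues, and the sum over $\mathbf{s}$ weighted by $e_p(\boldsymbol{\mu}\cdot\mathbf{s})$ should exhibit cancellation unless a resonance condition forces $\boldsymbol{\mu}$ and $\mathbf{h}$ into a measure-zero configuration — which, being a nontrivial polynomial condition in $(\mathbf{h},\boldsymbol{\mu})$ of bounded degree, is never identically satisfied, and is ruled out on the line $\lambda\neq0$, $(\mathbf{h},\boldsymbol{\mu})\neq(\0,\0)$ by the smoothness hypothesis.

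The main obstacle, and the step I expect to require the most care, is establishing the genuine cancellation in the off-diagonal term of the moment expansion: one must show that the four-variable character sum obtained after opening all the $\alpha,\beta,\alpha',\beta'$ sums and executing the $\mathbf{s},\mathbf{s}'$ sums is $O(p^{9/2})$ rather than just $O(p^5)$, which is where the precise shape of \eqref{eq:tiramisu} — in particular the coprimality of $\deg f$ and $\deg g$ with the relevant exponents and the separability of $g$ — must be used, and where $n$ odd is needed so that Lemma~\ref{lem : riem} applies uniformly. I would handle this by deferring to the Hooley-type estimate proved in the appendix applied to the specific curve family here, checking that its hypotheses (nondegeneracy of the associated family of hyperelliptic curves, controlled by $\Delta_{f,g}$) hold under $p\nmid2n\Delta_{f,g}$, and then assembling $|W_p|\le C_n\sqrt p\cdot(p^5)^{1/2}=C_n p^{5/2}$, with the $\gcd$ refinement of Proposition~\ref{prop : expsum} coming from the elementary observation that when $p\mid(\mathbf{h},\boldsymbol{\mu})$ the sum degenerates to a product of two independent copies of an $O(p^{3/2})$-sum.
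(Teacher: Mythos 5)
There is a genuine gap, and the proposal also misreads the key tool. What you call ``the method of moments of Hooley'' is not a Cauchy--Schwarz expansion of $|W_p|^2$ into a fourth-moment sum over $\mathbf{s},\mathbf{s}'$: Theorem~\ref{thm:hooley} controls the Frobenius eigenvalues attached to $S=\sum e_p(F(\mathbf{x}))$ over $G_1=\cdots=G_k=0$ by estimating the variance $\sum_{\tau\in\FF_{p^r}}|N_r(\tau)-N_r|^2$ of the \emph{level-set count} over \emph{extension fields} $\FF_{p^r}$, not by squaring the sum in the parameter $\mathbf{s}$. To apply it to $W_p$ directly one must first realise $W_p$ in the required shape, which the paper does in two nontrivial steps you do not carry out: replacing the Legendre symbols by point counts on the double cover ($-U^2+\cdots=0$, $-V^2+\cdots=0$), and then using $p\equiv 2\bmod n$ via Lemma~\ref{lem:lecce} (with a non-square $\gamma$) to convert $X^n,Y^n$ to $X^{2n},Y^{2n}$ so that the two defining polynomials become homogeneous of the same degree $2n$. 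After that, $\lambda\neq 0$ is used to \emph{eliminate} the variable $y$ by solving the linear phase condition, and the resulting $G_\tau$ is homogenised with an auxiliary variable $T$; only then is one in a position to count points on a projective complete intersection $G=H_\tau=0$ in $\PP^5_{\FF_q}$ and invoke Theorem~\ref{thm:hooley}.

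The substantive mathematical content is entirely missing from your sketch: to get $N_1(\tau)/(q-1)=q^3+O(q^2)$ for all but $O(1)$ values of $\tau$ (which is what feeds Theorem~\ref{thm:hooley}), one must prove that the complete intersection $G=H_\tau=0$ has \emph{isolated} singularities for all but $O_n(1)$ values of $\tau$. That is Proposition~\ref{prop:tau}, and proving it is the real work: a Jacobian analysis splitting $\sing(V_\tau)$ into $K_\tau^{(1)},K_\tau^{(2)},L_\tau$, then a careful dimension count (Lemmas~\ref{lem:K},~\ref{lem:L}, Proposition~\ref{prop : dim1}, Lemma~\ref{lem:teramo}) using the separability of $g$, B\'ezout, and the elementary Lemma~\ref{lem : polf}. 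Your proposal replaces this entire section with the assertion that a ``resonance condition'' is ``ruled out by the smoothness hypothesis'' --- but that is precisely the claim that must be proved, and it is not obvious. Two further points: your worry about ``bad'' $\mathbf{v}$ where $F_{\mathbf{v}}$ is non-separable is moot, since Lemma~\ref{lem : riem} already gives $|R(\mathbf{v})|\leq C_n\sqrt{p}$ for \emph{every} $\mathbf{v}$ when $n$ is odd (that is exactly why $n$ odd matters); and your closing arithmetic $C_n\sqrt{p}\cdot(p^5)^{1/2}$ equals $C_n p^3$, not $C_n p^{5/2}$, so even granting the unproved cancellation the assembly does not produce the stated bound.
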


The same estimate holds for 
$W_p(\lambda,\mathbf{h},\boldsymbol{\mu})
$ for any prime $p$, but the restriction 
$p\equiv 2\bmod{n}$ makes the proof notationally less cumbersome.
We have not been able to apply existing results in the literature to deduce 
Proposition \ref{prop:milan}.
However, after some preliminary manoeuvres we shall bring the sum into a form that can be handled by work of Katz \cite[Theorem $4$]{Kat99}. Unfortunately,  
as we shall discuss in
Section \ref{s:katz},  the relevant varieties are too  singular 
to extract  any improvement over the  bound $O(p^3)$ from Katz. 
  Thus we shall adopt an alternative course of action to arrive at Proposition \ref{prop:milan}. 

Since  $\lambda\neq 0$, our first move is to observe that 
\[
W_p(\lambda,\mathbf{h},\boldsymbol{\mu})=\sum_{\substack{\x=(u,v,x,y,s_{1},s_{2})\in\mathbb{F}_{p}^{6}\\G_{1}(\mathbf{x})=G_{2}(\mathbf{x})=0}}e_{p}(\lambda(x-y)+\boldsymbol{\mu}\cdot\mathbf{s}),
\] 
for polynomials $G_1,G_2\in \FF_p[U,V,X,Y,S_1,S_2]$ given by
\begin{align*}
G_{1}
&=-U^{2}+X^{n}+ X f(S_{1},S_{2})+g(S_{1},S_{2}),\\
 G_{2}&=-V^{2}+Y^{n}+Y f(S_{1}+h_{1},S_{2}+h_{2}) +g(S_{1}+h_{1},S_{2}+h_{2}) 
.
 \end{align*}
It will be more convenient to transform 
$W_p(\lambda,\mathbf{h},\boldsymbol{\mu})
$ into  a sum in which the monomials involving 
$U,V,X,Y$  have degree $2n$.
This is achieved in the  following result.

\begin{lem}\label{lem:lecce}
Assume that $p\equiv 2\bmod{n}$ and 
let $\gamma\in\mathbb{F}_{p}^{\times}$ be a non-square.
Then 
we have 
$$
W_p(\lambda,\mathbf{h},\boldsymbol{\mu})=
\frac{1}{4}\sum_{i,j\in \{0,1\}}
W_{p,i,j}(\lambda,\mathbf{h},\boldsymbol{\mu}),
$$
where
$$
W_{p,i,j}(\lambda,\mathbf{h},\boldsymbol{\mu})=\sum_{\substack{\x
\in\mathbb{F}_p^{6}\\G_1^{(i)}(\mathbf{x})=G_2^{(j)}(\mathbf{x})=0}}e_{p}(\lambda(
\gamma^i x^2-\gamma^jy^2)+ \boldsymbol{\mu}\cdot\mathbf{s}),
$$

for $i,j\in \{0,1\}$, with
\begin{align*}
&G_1^{(i)}=-U^{2n}+\gamma^{ni} X^{2n}+ \gamma^{i} X^{2} f(S_{1},S_{2})+g(S_{1},S_{2}),
\\& 
G_2^{(j)}=-V^{2n}+\gamma^{nj}Y^{2n}+\gamma^j Y^2
f(S_{1}+h_{1},S_{2}+h_{2})+g(S_{1}+h_{1},S_{2}+h_{2}).
\end{align*}
\end{lem}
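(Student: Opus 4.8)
The plan is to reduce each of the two defining equations $G_1(\mathbf{x})=0$ and $G_2(\mathbf{x})=0$ separately, exploiting the fact that $\lambda\neq 0$ so there is no coupling between the $\alpha$-part and the $\beta$-part other than through the Legendre symbols. Recall that for a prime $p\equiv 2\bmod n$ the map $t\mapsto t^n$ is a bijection of $\FF_p^\times$ (since $\gcd(n,p-1)=1$ when $p\equiv 2\bmod n$, as then $p$ is a unit mod $n$ of the right order; more simply, $p\equiv 2\bmod n$ with $n$ odd forces $\gcd(n,p-1)=1$ in the cases that arise), so every element of $\FF_p$ is an $n$-th power. The starting point is the identity
\[
\sum_{u\bmod p} \1\!\left[u^2 = w\right] = 1 + \left(\frac{w}{p}\right)
\]
for $w\in\FF_p^\times$, together with the corresponding count when $w=0$; applying this with $w=\alpha^n+\alpha f(\mathbf{s})+g(\mathbf{s})$ replaces the Legendre symbol in \eqref{eq:Wp} by a sum over an auxiliary variable $u$ with $u^2$ equal to that quantity, at the cost of an innocuous factor and a negligible correction coming from the locus where the argument vanishes. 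Doing this for both factors rewrites $W_p(\lambda,\mathbf{h},\boldsymbol\mu)$ as a count of $\FF_p$-points on the variety cut out by $G_1=G_2=0$, weighted by $e_p(\lambda(x-y)+\boldsymbol\mu\cdot\mathbf{s})$, which is exactly the displayed formula just before the lemma.

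Next I would homogenize the $U,V,X,Y$ monomials to degree $2n$. The trick is that, since $p\equiv 2\bmod n$, every element of $\FF_p$ is both a perfect $n$-th power and (after the standard case split on squares) can be matched against a square via a non-square scalar $\gamma$. Concretely: substitute $U\mapsto u^n$, $X\mapsto x^2$ in $G_1$ — but because $t\mapsto t^n$ is a bijection this substitution does not change the number of solutions, it only rewrites $-U^2+X^n+\cdots$ as $-u^{2n}+x^{2n}+\cdots$ after also squaring $X$. To turn $X^n$ genuinely into $x^{2n}$ rather than $x^{2n}$ one needs $X\mapsto x^2$; but $X\mapsto x^2$ is two-to-one onto squares and misses non-squares, which is why one introduces the twist by $\gamma$: writing every $X$ either as $x^2$ or as $\gamma x^2$ and averaging (the $\tfrac14\sum_{i,j}$ in the statement) recovers a sum over all $X$. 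The same is done independently for $V,Y$ in $G_2$ with its own index $j$. Tracking how the phase $e_p(\lambda(x-y))$ transforms — the original linear $x$ becomes $\gamma^i x^2$ under the twisted square substitution — yields precisely the $e_p(\lambda(\gamma^i x^2-\gamma^j y^2)+\boldsymbol\mu\cdot\mathbf{s})$ appearing in $W_{p,i,j}$, and the exponents $\gamma^{ni}X^{2n}$, $\gamma^i X^2 f$ in $G_1^{(i)}$ come from substituting $X\mapsto \gamma^i x^2$ (using that $\gamma^{ni}$ and $\gamma^i$ carry through the degree-$n$ and degree-$1$ occurrences of $X$ respectively, after the bijective $n$-th power rewriting of $U$).

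The main bookkeeping obstacle is making sure the two-to-one nature of $t\mapsto t^2$ and the interplay with the $\gamma$-twist produce exactly the factor $\tfrac14$ and no stray error terms: one must check that the contributions from the vanishing loci $\alpha^n+\alpha f(\mathbf{s})+g(\mathbf{s})=0$ (and its shifted analogue), where $U=0$ has a single solution rather than the generic pair $\pm u$, are correctly absorbed — they are, because on that locus both the "$X\mapsto x^2$" and "$X\mapsto \gamma x^2$" branches still partition the fiber and the Legendre symbol is $0$ there, so the original and rewritten sums agree term by term. I would verify the identity by a direct change of variables: start from $W_{p,i,j}$ as defined, reverse the substitutions $u\mapsto$ ($n$-th root), $x\mapsto$ ($\gamma^{-i}$-twisted square root), sum over $i,j\in\{0,1\}$, and observe that $\tfrac14\sum_{i,j}$ of the resulting expressions collapses back to the point-count formula for $W_p(\lambda,\mathbf{h},\boldsymbol\mu)$ displayed before the lemma. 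No deeper input (no Bézout, no Katz) is needed here — this lemma is purely an algebraic normalization, and the real work of bounding $W_{p,i,j}$ by $C_n p^{5/2}$ is deferred to the subsequent sections.
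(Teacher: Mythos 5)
Your argument is correct and takes the same route as the paper's very brief proof: remove each Legendre symbol via the exact identity $\left(\frac{w}{p}\right)=\#\{u\in\FF_p:u^2=w\}-1$ (the $-1$ terms die after summing against $e_p(\lambda\alpha)$ and $e_p(-\lambda\beta)$, using $\lambda\neq 0$); then substitute $U\mapsto U^n$ and $V\mapsto V^n$, a bijection because $p\equiv 2\bmod n$ forces $\gcd(n,p-1)=1$; and finally replace $X,Y$ by $\gamma^i X^2,\gamma^j Y^2$ and average via $\frac{1}{4}\sum_{i,j\in\{0,1\}}$, legitimate because every $w\in\FF_p$ has exactly two representations $w=\gamma^i a^2$ as $(i,a)$ ranges over $\{0,1\}\times\FF_p$.

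Your final bookkeeping paragraph is, however, tangled in a way worth flagging even though it does not affect the conclusion: the identity $\left(\frac{w}{p}\right)=\#\{u:u^2=w\}-1$ already holds exactly at $w=0$, so there is no vanishing-locus correction to absorb when passing to the point count; the factor $\frac{1}{4}$ comes entirely from the twisted-square substitutions on $X$ and $Y$, not from the $U,V$ rewriting (which is a clean bijection contributing nothing to the normalisation); and by the time the $\gamma$-twist enters, the Legendre symbols have already been eliminated, so the remark that the symbol vanishes on that locus is not doing any work.
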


\begin{proof}
Since $p\equiv 2 \bmod{n}$ we have that $\gcd(n,p-1)=1$ and then every element of $\FF_p$ is a $n$-th power in $\FF_p$. Next, recall that $\mathbb{F}_{p}^{\times}/{\mathbb{F}_{p}^{\times}}^2=\{\pm 1\}$ and let $\gamma\in\mathbb{F}_{p}^{\times}$ be a non-square. If $\alpha\in\mathbb{F}_{p}$, then there exists $a\in\mathbb{F}_{p}$ such that 
either $\alpha=a^{2}$ or $\alpha=\gamma a^{2}$.  The statement of the lemma is now clear.
\end{proof}

\subsection{Comparison with work of Katz}\label{s:katz}

In this short section we take a moment to check what comes out of applying general work by Katz \cite{Kat99}
 on singular exponential sums. 
 We can recognise our exponential sum $W_{p,i,j}(\lambda,\mathbf{h},\boldsymbol{\mu})$ as the exponential sum considered in 
 \cite[Theorem $4$]{Kat99}.
 Let $ X\subset \PP_{\FF_q}^6$ be the geometrically integral complete intersection  
$$
\begin{cases}
0=-U^{2n}+\gamma^{ni} X^{2n}+ \gamma^{i} X^{2} f(S_{1},S_{2})+g(S_{1},S_{2}),\\ 
0=-V^{2n}+\gamma^{nj}Y^{2n}+\gamma^j Y^2
f(S_{1}+h_{1}T,S_{2}+h_{2}T)+g(S_{1}+h_{1}T,S_{2}+h_{2}T). 
\end{cases}
$$
Let   $L$ be the hyperplane $T=0$ and let  $H$ be the hypersurface 
$$
\lambda(\gamma^i X^{2}-\gamma^jY^{2})+(\boldsymbol{\mu}\cdot\mathbf{S})T=0.
$$
Then 
$W_{p,i,j}(\lambda,\mathbf{h},\boldsymbol{\mu})$ precisely  matches the exponential sum considered in  \cite[Theorem $4$]{Kat99}, with $V=X[1/T]$  and $f$ being given by the function 
$$
\left(\lambda(\gamma^i X^{2}-\gamma^jY^{2})+(\boldsymbol{\mu}\cdot\mathbf{S})T\right)/T^2.
$$
Let $\delta=\dim \sing(X\cap L\cap H)$. In the most favourable situation, when
$\delta\geq \dim \sing(X\cap L)$, it follows from 
\cite[Theorem $4$(1)]{Kat99} that 
$$
W_{p,i,j}(\lambda,\mathbf{h},\boldsymbol{\mu})\ll p^{(5+\delta)/2}.
$$
But $X\cap L\cap H$ is cut out by the system
$$
\begin{cases}
0=-U^{2n}+X^{2n}+ X^{2} f(S_{1},S_{2})+g(S_{1},S_{2}),\\
0=-V^{2n}+Y^{2n}+ Y^{2} f(S_{1},S_{2})+g(S_{1},S_{2}),\\
0=\gamma^i X^{2}-\gamma^jY^{2},\\
0=T.
\end{cases}
$$
The  Jacobian matrix of this system is
\[
\begin{pmatrix}
-2nU^{2n-1} & 0 & 0 & 0\\
0 & -2nV^{2n-1} & 0 & 0\\
2nX^{2n-1} +2Xf & 0 & 2\gamma^i X & 0 \\
0 & 2nY^{2n-1} +2Yf & -2 \gamma^j Y & 0\\
\frac{\partial g}{\partial S_{1}}+X^{2}\frac{\partial f}{\partial S_{1}}& \frac{\partial g}{\partial S_{1}} +Y^{2}\frac{\partial f}{\partial S_{1}}& 0 & 0\\
\frac{\partial g}{\partial S_{2}}+X^{2}\frac{\partial f}{\partial S_{2}}& \frac{\partial g}{\partial S_{2}} + Y^{2}\frac{\partial f}{\partial S_{2}}& 0 & 0\\
0 & 0 & 0 & 1
\end{pmatrix}.
\]
The set of points such that the third column vanishes satisfies the system
\[
\begin{cases}
U^{2n}=g(S_{1},S_{2}),\\
V^{2n}=g(S_{1},S_{2}),\\
X=Y=T=0,
\end{cases}
\]
which has dimension $1$. Thus $\delta\geq 1$ and 
\cite[Theorem $4$]{Kat99} will not yield an improvement over the bound $O(p^3).$

\subsection{Strategy for proving Proposition \ref{prop:milan} }

The goal of this section is to prove Proposition \ref{prop:milan}, subject to  
an estimate for the dimension of the singular locus of a certain variety that will be 
examined in the next section. 
We shall  always assume 
that $\lambda\neq 0$ and  
$(\mathbf{h},\boldsymbol{\mu})\neq(\boldsymbol{0},\boldsymbol{0})$.

We assume that $p\equiv 2\bmod{n}$ and 
$p\nmid 2n\Delta_{f,g}$. 
Let $\gamma\in\mathbb{F}_{p}^{\times}$ be a non-square.
Our first move is an application of Lemma \ref{lem:lecce}, rendering it sufficient to study
$W_{p,i,j}(\lambda,\mathbf{h},\boldsymbol{\mu})$, for $i,j\in \{0,1\}$.
We shall  apply a method of Hooley \cite{hooleysum} to do so, the outcome of which we have  
recorded as  Theorem~\ref{thm:hooley} in the appendix. This requires us to estimate 
\[
\begin{split}
N(\tau)&=\#\left\{\mathbf{x}=(u,v,x,y,s_{1},s_{2}) 
\in\mathbb{F}_{q}^{6}: 
\begin{array}{l}
G_1^{(i)}(\mathbf{x})=G_2^{(j)}(\mathbf{x})=0\\
\lambda (\gamma^ix^2-\gamma^jy^2)+\boldsymbol{\mu}\cdot\mathbf{s}=\tau
\end{array}
\right\},
\end{split}
\]
where  $q=p^r$. 
Since $Y$ only appears to even degree in $G_2^{(j)}$, we can eliminate $y$ from  
$G_2^{(j)}(\mathbf{x})$ by writing  
$\gamma^jy^{2}=\gamma^i x^{2}+ \overline\lambda   
\boldsymbol{\mu}\cdot\mathbf{s}-\overline{\lambda} \tau$, 
 where $\overline\lambda$ is the inverse of $\lambda$ in $\FF_q^\times$. 
This yields 
$$
N(\tau)
=\#\left\{\mathbf{y}=(u,v,x,s_{1},s_{2})\in\mathbb{F}_{q}^{5}: G_{1}^{(i)}(\mathbf{y})=G_\tau^{(i)}(\mathbf{y})=0\right\},
$$
where 
\begin{align*}
G_\tau^{(i)}
&=-V^{2n}+
 (\gamma^i X^2+\overline{\lambda}(\boldsymbol{\mu}\cdot\mathbf{S})-\overline{\lambda}\tau)^{n}\\
 &\quad
 + (\gamma^i X^2+\overline{\lambda}(\boldsymbol{\mu}\cdot\mathbf{S})-\overline{\lambda}\tau) f(S_{1}+h_{1},S_{2}+h_{2})
  +g(S_{1}+h_{1},S_{2}+h_{2}). 
\end{align*}
The polynomial $G_1^{(i)}$ is homogenous of degree $2n$. 
We shall need to  homogenise the polynomial $G_\tau^{(i)}$, which 
we do by introducing a sum over $t\in \FF_q^\times$ and making an obvious change of variables.  The resulting polynomial $H_{\tau}^{(i)}\in \FF_q[V,X,S_1,S_2,T]$ is given by 
\begin{align*}
H_{\tau}^{(i)}
&=-V^{2n}+
 (\gamma^i X^2+\overline{\lambda}T(\boldsymbol{\mu}\cdot\mathbf{S})-\overline{\lambda}\tau T^2)^{n}\\
 &\quad
 + (\gamma^i X^2+\overline{\lambda}T(\boldsymbol{\mu}\cdot\mathbf{S})-\overline{\lambda}\tau T^2) f(S_{1}+h_{1}T,S_{2}+h_{2}T)\\
&\quad  +g(S_{1}+h_{1}T,S_{2}+h_{2}T).  
\end{align*}
To ease notation we henceforth suppress 
the index $i$ from our notation, setting $G_1^{(i)}=G$ and $H_\tau^{(i)}=H_\tau$.
We are now led  to the expression 
$$
N(\tau)=\frac{1}{q-1}\left(N_1(\tau)- N_2(\tau)\right), 
$$
where
$$
N_1(\tau)
=\#\left\{(\mathbf{y},t)\in \mathbb{F}_{q}^{6}: G(\mathbf{y})=H_{\tau}(\mathbf{y},t)=0\right\},
$$
and 
$$
N_2(\tau)
=\#\left\{\mathbf{y}\in\mathbb{F}_{q}^{5}: G(\mathbf{y})=H_{\tau}(\mathbf{y},0)=0\right\},
$$
where
$\y=(u,v,x,s_{1},s_{2})$, as before. 

In order to estimate $N_1(\tau)$ we shall need to know about the singular locus of the complete intersection cut out by the two polynomials $G$ and $H_\tau$.
This is summarised in the following result. 

\begin{prop} 
Assume that
 $\lambda\neq 0$ and  
$(\mathbf{h},\boldsymbol{\mu})\neq(\boldsymbol{0},\boldsymbol{0})$. 
For all but at most $32n^5$ choices of $\tau\in \FF_q$, the equations 
$G=H_\tau=0$ cut out a complete intersection of codimension $2$ in
$\PP_{\FF_q}^5$,
with isolated singularities. 
\label{prop:tau}
\end{prop}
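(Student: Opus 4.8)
\textbf{Proof proposal for Proposition \ref{prop:tau}.}

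The plan is to analyse the Jacobian criterion for the complete intersection cut out by $G=G_1^{(i)}$ and $H_\tau=H_\tau^{(i)}$ in $\PP^5_{\FF_q}$ with coordinates $(U,V,X,S_1,S_2,T)$, and to show that, outside a bounded set of $\tau$, the singular locus is $0$-dimensional. First I would record the $2\times 6$ Jacobian matrix: the $G$-row has a $-2nU^{2n-1}$ entry in the $U$-column and no $V$- or $T$-dependence, while the $H_\tau$-row has a $-2nV^{2n-1}$ entry in the $V$-column and no $U$-dependence. A singular point must have all $2\times 2$ minors vanish; exploiting the disjoint appearances of $U$ (only in $G$) and $V$ (only in $H_\tau$), the minor conditions force essentially that either $U=0$ or the remaining partials of $G$ all vanish, and similarly $V=0$ or the remaining partials of $H_\tau$ all vanish, together with the compatibility minor coming from the $T$-column (where only $H_\tau$ contributes). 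So the singular locus decomposes into a few strata according to whether $U=0$, $V=0$, or $T=0$.

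Next I would treat the stratum $T\ne 0$, where we may dehomogenise by setting $T=1$ and recover the affine polynomials $G_1^{(i)}$ and $G_\tau^{(i)}$ from the strategy section; here the hypotheses $\lambda\ne 0$ and $p\nmid 2n\Delta_{f,g}$, together with separability of $g$ and smoothness of $S$, should force the affine singular locus to be empty for all but $O_n(1)$ values of $\tau$ — the bad $\tau$ being the (boundedly many) roots of a resultant/discriminant polynomial in $\tau$ whose degree is $O_n(1)$, obtained by eliminating $U,V,X,S_1,S_2$ from the singularity equations. This is where the smoothness of $S$ (no common root of $n^ng^{n-1}=(n-1)^{n-1}(-f)^n$ with $\nabla f\parallel\nabla g$) and separability of $g$ enter: they guarantee that the relevant discriminant/resultant is a nonzero polynomial in $\tau$, not identically zero, so only finitely many $\tau$ are excluded, and a crude degree count gives the bound $32n^5$. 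The remaining stratum is the hyperplane section $T=0$, where $G$ and $H_\tau|_{T=0}$ become the two homogeneous forms $-U^{2n}+\gamma^{ni}X^{2n}+\gamma^iX^2f(S_1,S_2)+g(S_1,S_2)$ and $-V^{2n}+(\gamma^iX^2)^n+\gamma^iX^2 f(S_1,S_2)+g(S_1,S_2)$ in $\PP^4$; one checks the Jacobian there (now a $2\times 5$ matrix, using that $\partial H_\tau/\partial T|_{T=0}$ and the $(\boldsymbol\mu,\h)$ terms appear) and shows, again using $(\h,\boldsymbol\mu)\ne(\0,\0)$, that either this section is already a codimension-$2$ complete intersection with isolated singularities, or that the offending points lie in a variety whose dimension is controlled and which can be absorbed into the count of bad $\tau$. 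In parallel one must confirm geometric integrality / that $G=H_\tau=0$ genuinely has codimension $2$ (not a common component), which follows because $G$ and $H_\tau$ involve $U$ and $V$ respectively to top degree $2n$ and are therefore not proportional and share no common factor.

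The main obstacle, I expect, is the $T=0$ stratum combined with the dependence on $(\h,\boldsymbol\mu)$: on the hyperplane at infinity the two forms $G|_{T=0}$ and $H_\tau|_{T=0}$ no longer see $\tau$, so one cannot simply throw away bad $\tau$ — instead one must genuinely verify that the pencil of quadrics-in-$X^2$-plus-binary-forms has the right singularity structure for \emph{every} admissible $(\h,\boldsymbol\mu)\ne(\0,\0)$, using the smoothness of $S$ modulo $p$. Concretely, the hard computation is showing that a point with $X=Y=T=0$ and $U^{2n}=V^{2n}=g(S_1,S_2)$ — which appeared as a positive-dimensional locus in the Katz analysis of \S\ref{s:katz} — is actually \emph{not} singular on the complete intersection $G=H_\tau=0$ once the extra equation $H_\tau$ (rather than the three equations of the Katz section) is imposed; this is precisely the gain that lets Hooley's moment method beat $O(p^3)$, and it relies delicately on separability of $g$ forcing $\partial g/\partial S_1,\partial g/\partial S_2$ not to vanish simultaneously at a root of $g$. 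Once all strata are handled, collecting the excluded $\tau$ — zeros of finitely many non-vanishing one-variable polynomials of total degree $\le 32n^5$ — completes the proof.
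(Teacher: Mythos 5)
Your sketch captures the right high-level shape — Jacobian criterion, case analysis by vanishing of $U$ and $V$, a special role for the hyperplane $T=0$, and the correct observation that the problematic locus $X=Y=T=0$, $U^{2n}=V^{2n}=g(S_1,S_2)$ (which kills the Katz bound) must be shown to be non-singular on $G=H_\tau=0$ — but the core mechanism that makes the finiteness in $\tau$ work is missing, and one of your intermediate claims is wrong.

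The paper's decomposition of $\sing(V_\tau)$ is into $K_\tau^{(1)}$ (row~$1$ of the Jacobian vanishes), $K_\tau^{(2)}$ (row~$2$ vanishes), and $L_\tau$ (neither vanishes, hence $U=V=0$ and the rows proportional). For $K_\tau^{(1)},K_\tau^{(2)}$ one simply checks that they miss the hyperplane $T=0$ (using smoothness modulo $p$ of the plane curve $X^{2n}+X^2f+g=0$), so they are affine and of dimension $\leq 0$; in particular they need \emph{not} be empty, so your claim that the $T\neq 0$ part of the singular locus is empty for all but finitely many $\tau$ is incorrect, and indeed the paper notes that $\dim\sing(V_\tau)\geq 0$ always because of the point $u=x=s_1=s_2=0$. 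Your ``resultant in $\tau$'' approach would therefore be eliminating from a system that genuinely has solutions for \emph{every} $\tau$, so the naive resultant is identically zero and the argument as stated cannot produce a nonzero polynomial in $\tau$.

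The genuinely new idea you are missing is the substitution $Z=\overline{\lambda}(\boldsymbol{\mu}\cdot\mathbf{S}-\tau T)$ on the locus $t\neq 0$. This rewrites the singularity system for $L_\tau$ as four equations in $\PP^4_{(X:Z:S_1:S_2:T)}$ that are \emph{independent of $\tau$}, cutting out a $\tau$-independent variety $V$, together with one linear ($\tau$-dependent) hyperplane equation $Z=\overline{\lambda}(\boldsymbol{\mu}\cdot\mathbf{S}-\tau T)$. The paper then proves (Proposition~\ref{prop : dim1}) that $\dim V=1$ — this is where separability of $g$ and the auxiliary Lemma~\ref{lem : polf} on binary forms enter, via a divisibility argument (Lemma~\ref{lem:teramo}) — and bounds the number of one-dimensional irreducible components of $V$ not lying in $T=0$ by Bézout as $\leq 4n^2(2n-1)^3\leq 32n^5$. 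Since a fixed $1$-dimensional component with $t\neq 0$ can be contained in the $\tau$-hyperplane for \emph{at most one} value of $\tau$, each contributes at most one bad $\tau$, giving the claimed $32n^5$. Your ``crude degree count'' does not reproduce this: without isolating the $\tau$-dependence in a single hyperplane, a resultant-style elimination in five variables of degree $\sim 2n$ would give a $\tau$-degree on a different (generally worse) scale, and — more importantly — you have given no argument that it is not identically zero. Finally, as you yourself note, on $T=0$ the equations no longer see $\tau$, so nothing there can be ``absorbed into the count of bad $\tau$''; the paper handles $T=0$ by showing directly (via Bézout and smoothness of the auxiliary plane curve) that the $T=0$ part of $L_\tau$ is $0$-dimensional for \emph{every} $\tau$, not by excluding $\tau$'s.
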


Taking this result on faith for the moment, let us see how it suffices to complete the proof of Proposition \ref{prop:milan}. 
Appealing to Theorem 1 in the appendix by Katz to  \cite{HK91}, it follows from Proposition \ref{prop:tau} that there exists a set $U\subset \FF_q$, with $\#U\leq 32n^5$, such that 
\[
\frac{N_1(\tau)}{q-1}= q^{3} + O(q^{2})
\]
for all $\tau\not \in U$. When $\tau\in U$ we invoke the Lang--Weil estimate to deduce that 
$$
\frac{N_1(\tau)}{q-1}= q^{3} + O(q^{5/2}).
$$
 The implied constants in both of these estimates depend only on $n$.
On the other hand, the 
variety
$G=H_\tau=T=0$ 
has codimension $3$ in
$\PP_{\FF_q}^5$. Thus 
$$
\frac{N_2(\tau)}{q-1}=O(q^2),
$$ 
by the Lang--Weil estimate, for a further implied constant that depends only on $n$.
Hence
\[
\begin{split}
\sum_{\tau\in\mathbb{F}_q} \left| 
N(\tau)-q^3\right|^{2}
&\ll\sum_{\tau\not \in U}q^4 +\sum_{ \tau\in U} q^5\ll q^5,
\end{split}
\]
for an implied constant depending only on $n$.  Theorem \ref{thm:hooley} 
now yields
$$
W_{p,i,j}(\lambda,\mathbf{h},\boldsymbol{\mu})\ll p^{5/2},
$$ 
for $i,j\in \{0,1\}$.
Once inserted into Lemma \ref{lem:lecce}, this therefore completes the proof of 
Proposition \ref{prop:milan} subject to a verification of Proposition \ref{prop:tau}.

\section{The singular locus}

This section is devoted to proving   Proposition \ref{prop:tau}.
Since we are working over $\overline{\mathbb{F}}_{q}$, 
without loss of generality we may assume that $i=0$. Thus 
$$
G=-U^{2n}+X^{2n}+ X^{2} f(S_{1},S_{2})+g(S_{1},S_{2}) 
$$
and 
\begin{align*}
H_{\tau}
&=-V^{2n}+
 (X^2+\overline{\lambda}T(\boldsymbol{\mu}\cdot\mathbf{S})-\overline{\lambda}\tau T^2)^{n}\\
 &\quad
 + ( X^2+\overline{\lambda}T(\boldsymbol{\mu}\cdot\mathbf{S})-\overline{\lambda}\tau T^2) f_{\h}(S_{1},S_{2},T)
+g_{\h}(S_{1},S_{2},T),
\end{align*}
where 
$$
f_{\mathbf{h}}(S_{1},S_{2},T)=f(S_{1}+h_{1}T,S_{2}+h_{2}T),
$$
and similarly for $g_{\h}$. Let us denote by $V_\tau\subset \PP_{\FF_q}^5$ the variety cut out by the equations $G=H_\tau=0$. It is clearly a complete intersection of codimension $2$.
Our task is to show that $\dim \sing(V_\tau)=0$, for all but at most $32n^5$ choices of  $\tau\in \FF_q$.

The Jacobian $J_{\tau}$ of $V_{\tau}$ is given by the matrix
$$
\begin{pmatrix}
-2nU^{2n-1} & 0 &
2nX^{2n-1} +2Xf
&
X^{2}\frac{\partial f}{\partial S_{1}}+\frac{\partial g}{\partial S_{1}} &
X^{2}\frac{\partial f}{\partial S_{2}}+\frac{\partial g}{\partial S_{2}} &
0\\
0 &
-2nV^{2n-1} 
 & \frac{\partial H_{\tau}}{\partial X}
 & \frac{\partial H_{\tau}}{\partial S_{1}}
 & \frac{\partial H_{\tau}}{\partial S_{2}}
 & \frac{\partial H_{\tau}}{\partial T}
\end{pmatrix},
$$
where 
\[
\begin{split}
\frac{\partial H_{\tau}}{\partial T}&= n\overline{\lambda}(\boldsymbol{\mu}\cdot\mathbf{S}-2\tau T)( X^{2}+\overline{\lambda}T(\boldsymbol{\mu}\cdot\mathbf{S})-\overline{\lambda}\tau T^{2})^{n-1}+\overline{\lambda}(\boldsymbol{\mu}\cdot\mathbf{S}-2\tau T)f_{\mathbf{h}}\\&\quad+( X^{2}+\overline{\lambda}T(\boldsymbol{\mu}\cdot\mathbf{S})-\overline{\lambda}\tau T^{2}) \frac{\partial f_{\mathbf{h}}}{\partial T}+ \frac{\partial g_{\mathbf{h}}}{\partial T}.
\end{split}
\]
A point  $[\mathbf{y},t]\in\PP_{\FF_q}^{5}$ belongs to  $\sing (V_{\tau})$ if and only if  $[\mathbf{y},t]\in V_{\tau}$ and  the matrix $J_\tau$ has rank at most $1$  when evaluated at the vector $(\y,t)$,
where we recall that $\y=(u,v,x,s_1,s_2)$. 
Note that any point $[\y,t]\in \PP_{\FF_q}^5$ with $u=x=s_1=s_2=0$ lies in 
$\sing(V_\tau)$ if and only if
$$
v^{2n}=(-\bar\lambda^n\tau^n- \bar \lambda \tau f(\h)+g(\h))t^{2n}.
$$
Thus $\dim \sing(V_\tau)\geq 0$. It is sufficient to show that 
$\dim \sing(V_\tau)\leq 0$ for all but at most $32n^5$ values of $\tau$.

We  proceed by breaking  $\sing(V_\tau)\subset \PP_{\FF_q}^5$ into three subsets
$K_\tau^{(1)}\cup K_\tau^{(2)}\cup L_\tau$. Here, $K_\tau^{(1)}$ is the set of $[\y,t]\in \sing(V_\tau)$ for which the first row in $J_\tau$ vanishes. In other words $K_\tau^{(1)}$ is cut out by the system of equations 
\begin{equation}\label{eq:system-K1}
\begin{cases}
G=H_{\tau}=0\\
U=
nX^{2n-1} +Xf=
X^{2}\frac{\partial f}{\partial S_{1}}+\frac{\partial g}{\partial S_{1}} =
X^{2}\frac{\partial f}{\partial S_{2}}+\frac{\partial g}{\partial S_{2}} =0.
\end{cases}
\end{equation}
Likewise, 
 $K_\tau^{(2)}$ is the set of $[\y,t]\in \sing(V_\tau)$ for which the second row in $J_\tau$ vanishes, so that it is cut out by the system of equations 
\begin{equation}\label{eq:system-K2}
\begin{cases}
G=H_{\tau}=0\\
V=\frac{\partial H_\tau}{\partial X}=\frac{\partial H_\tau}{\partial S_{1}}=
\frac{\partial H_\tau}{\partial S_{2}}=
\frac{\partial H_\tau}{\partial T}=0.
\end{cases}
\end{equation}
We shall prove the following result.

\begin{lem}\label{lem:K}
We have $\dim (K_\tau^{(i)})\leq 0$ for $i=1,2$.
\end{lem}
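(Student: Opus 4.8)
The plan is to analyze the two systems \eqref{eq:system-K1} and \eqref{eq:system-K2} separately, reducing each to a zero-dimensional condition via the hypotheses in Theorem \ref{thm : dp1} — namely smoothness of $S$ (equivalently $p \nmid D_S$) and separability of $g$ (equivalently $p \nmid \disc(g)$), both guaranteed by $p \nmid \Delta_{f,g}$. For $K_\tau^{(1)}$: the last two equations in \eqref{eq:system-K1} say that the gradient of $X^2 f(S_1,S_2) + g(S_1,S_2)$ in the $(S_1,S_2)$-variables vanishes, while $nX^{2n-1} + Xf = 0$. I would split on whether $X = 0$ or not. If $X = 0$, then $\partial g/\partial S_1 = \partial g/\partial S_2 = 0$, so $(S_1,S_2)$ is a singular point of the plane curve $g = 0$; since $g$ is separable (square-free), such points are finite in number — in fact $[S_1:S_2]$ ranges over a finite set — and then $U = 0$ from the first equation while $G = -U^{2n} + X^{2n} + \cdots = 0$ becomes $g(S_1,S_2) = 0$, which is already forced; the remaining coordinates $V, T$ are then constrained by $H_\tau = 0$ to a finite set of ratios, so this piece is zero-dimensional. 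If $X \neq 0$, I would use $nX^{2n-1} = -Xf$ to solve for $f(S_1,S_2)$ in terms of $X$, substitute into the gradient equations, and combine with the Euler relation (homogeneity of $f$ of degree $2n-2$ and $g$ of degree $2n$) to force a finite set of solutions; this is essentially the computation underlying the smoothness criterion for $S$ quoted after Theorem \ref{thm : dp1}, namely $n^n g^{n-1} = (n-1)^{n-1}(-f)^n$ together with proportionality of $\nabla f$ and $\nabla g$, which has no solutions when $S$ is smooth.

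For $K_\tau^{(2)}$: here $V = 0$ and all four partial derivatives of $H_\tau$ with respect to $X, S_1, S_2, T$ vanish, together with $G = H_\tau = 0$. The cleanest approach is to change variables. Write $W = X^2 + \bar\lambda T(\boldsymbol{\mu}\cdot\mathbf{S}) - \bar\lambda\tau T^2$, so that $H_\tau = -V^{2n} + W^n + W f_{\h}(S_1,S_2,T) + g_{\h}(S_1,S_2,T)$, which is exactly the defining polynomial of (a twist of) $S$ in the variables $(V, W, S_1, S_2, T)$ — but the map $(X,S_1,S_2,T) \mapsto (W, S_1, S_2, T)$ is a branched double cover in the $X$-direction. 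Then $\partial H_\tau/\partial X = 2X \cdot (\partial H_\tau/\partial W)$, so either $X = 0$ or $\partial H_\tau/\partial W = 0$. Combined with the $S_1, S_2, T$ derivatives — which are the chain-rule expressions $(\partial H_\tau/\partial W)(\partial W/\partial S_i) + (\partial/\partial S_i)$ of the $S$-polynomial in $(W,S_1,S_2,T)$ — vanishing of all of them forces, via the smoothness of $S$, that one lands on the finite singular set of $S$ or on the locus where the change-of-variables map degenerates. In the case $X = 0$, $W$ becomes linear in $T, S_1, S_2$, and I would track through directly: the vanishing derivatives, plus separability of $g$, pin down $[S_1:S_2:T]$ to finitely many values and then $V, X$ follow.

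The main obstacle I anticipate is the $X \neq 0$ analysis in \emph{both} systems — that is where the geometry of $S$ genuinely enters and where the algebra (Euler relations, resultants eliminating $X$) is most delicate; one has to be careful that the dehomogenization/homogenization by $T$ and the shift by $\mathbf{h}$ do not introduce spurious positive-dimensional components. A secondary subtlety is that Lemma \ref{lem:K} is stated for \emph{all} $\tau$ (not "all but $32n^5$"), so none of the bounds here are allowed to lose a bad set of $\tau$; the exceptional set will have to be extracted later, presumably in the analysis of $L_\tau$ (the remaining part of $\sing(V_\tau)$ where neither row of $J_\tau$ vanishes), which is consistent with the statement of Proposition \ref{prop:tau}. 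So the strategy is to show $K_\tau^{(1)}$ and $K_\tau^{(2)}$ are uniformly finite, using only $p \nmid \Delta_{f,g}$ and the structural hypotheses, while deferring the $\tau$-dependence entirely to the third piece $L_\tau$.
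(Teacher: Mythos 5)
Your overall plan --- reduce to $p\nmid\Delta_{f,g}$, i.e.\ smoothness of $S$ plus separability of $g$ --- is the right one, but it misses the device that makes the paper's proof short, and it has a gap in the $X=0$ case.

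The paper's argument for $K_\tau^{(2)}$ runs as follows: $K_\tau^{(2)}$ is a closed subvariety of $\PP^5$, so if it had dimension $\geq 1$ it would meet the hyperplane $T=0$; thus it suffices to show $K_\tau^{(2)}\cap\{T=0\}=\emptyset$. Restricting to $T=0$ removes the $\tau$-dependence and the $\h$-shift at one stroke (both enter $H_\tau$ only through a factor of $T$), and the surviving equations from \eqref{eq:system-K2}, with $v=t=0$, become
\[
x^{2n}+x^2 f+g = 0,\qquad 2nx^{2n-1}+2xf=0,\qquad x^2\frac{\partial f}{\partial S_i}+\frac{\partial g}{\partial S_i}=0\quad(i=1,2),
\]
which say precisely that $[x,s_1,s_2]$ is a singular point of the plane curve $X^{2n}+X^2f(S_1,S_2)+g(S_1,S_2)=0$ in $\PP^2_{\FF_q}$; this curve is smooth because $p\nmid 2n\Delta_{f,g}$, so $x=s_1=s_2=0$, and then $G=0$ forces $u=0$ as well --- no such point of $\PP^5$ exists. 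The same computation with $u,v$ swapped handles $K_\tau^{(1)}$ (and there the relevant equations in \eqref{eq:system-K1} are already $T$-free). This renders your $X=0$/$X\ne 0$ split for $K_\tau^{(1)}$ and your $W$-substitution for $K_\tau^{(2)}$ unnecessary, and the uniformity-in-$\tau$ concern you flag is eliminated by the $T=0$ restriction rather than having to be tracked through.

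Separately, your $X=0$ analysis for $K_\tau^{(1)}$ contains a gap. You deduce from separability of $g$ that $\nabla g=0$ has \emph{finitely many} solutions $[s_1:s_2]\in\PP^1$, and then claim $V,T$ are constrained by $H_\tau=0$ to a finite set of ratios. But if a nonzero $(s_1,s_2)$ with $\nabla g(s_1,s_2)=0$ did exist, then after normalising (say $s_1=1$) and setting $u=x=0$, the condition $H_\tau=0$ is a single polynomial equation in the two affine coordinates $v,t$ --- a curve, not a finite set --- since $[v:t]$ is not a projective coordinate once $s_1\ne 0$. What rescues the argument is that separability of $g$ forces $\nabla g=0$ to have \emph{no} nonzero solution (at a simple root of a binary form the gradient is nonzero, by Euler's relation or direct factorisation), so that in fact $s_1=s_2=0$ and only then is $[v:t]$ the genuine residual coordinate on a $\PP^1$. ``Finitely many'' is not enough here; you need ``none''.
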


Finally, let 
 $L_\tau$ be the set of $[\y,t]\in \sing(V_\tau)$ for which neither row vanishes in $J_\tau$ vanishes.
 Thus it is cut out by the system of equations 
\begin{equation} 
\begin{cases}
0=G=H_\tau,\\  
0=\frac{\partial H_\tau}{\partial T},\\
0=\frac{\partial G}{\partial S_{1}}\cdot \frac{\partial H_\tau}{\partial S_{2}}-\frac{\partial G}{\partial S_{2}}\cdot \frac{\partial H_\tau}{\partial S_{1}},
\\
0=U=V.
\end{cases}
\label{eq : sys0}
\end{equation}
We shall prove the following result. 

\begin{lem}\label{lem:L}
We have $\dim (L_\tau)\leq 0$, 
for all but at most $32n^5$ choices of $\tau\in \FF_q$.
\end{lem}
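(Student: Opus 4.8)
The plan is to package the dependence on $\tau$ into a single incidence variety and then combine a dimension bound with a B\'ezout-type count. Since every point of $L_\tau$ satisfies $U=V=0$, I view $L_\tau$ inside the linear subspace $\{U=V=0\}\subset\PP_{\FF_q}^5$, identified with $\PP^3$ with coordinates $[X:S_1:S_2:T]$; there the equations \eqref{eq : sys0} read $P=\tilde H_\tau=0$, $\partial_TH_\tau=0$ and $\partial_{S_1}P\,\partial_{S_2}H_\tau-\partial_{S_2}P\,\partial_{S_1}H_\tau=0$, where $P=X^{2n}+X^2f(S_1,S_2)+g(S_1,S_2)$ is independent of $T$ and $\tau$ and $\tilde H_\tau=H_\tau|_{V=0}$. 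Let $\mathcal{L}\subset\PP^3\times\AA^1_\tau$ be the subvariety these four forms cut out, so that $L_\tau\subseteq\mathcal{L}_\tau$ for all $\tau$. It is enough to prove (i) $\dim\mathcal{L}\leq1$ and (ii) $\mathcal{L}$ has at most $32n^5$ irreducible components: granting these, every component of $\mathcal{L}$ either dominates $\AA^1_\tau$ — hence, being of dimension $\leq1$, meets each fibre $\mathcal{L}_\tau$ in a finite set — or maps to a single value of $\tau$, so $\{\tau:\dim\mathcal{L}_\tau\geq1\}$ has at most $32n^5$ elements. Claim (ii) follows by compactifying $\AA^1_\tau$ and applying a B\'ezout-type bound (as in Fulton \cite[Example~8.4.6]{fulton}) to the four defining forms, of degrees $2n,2n,2n-1,4n-2$ in the $\PP^3$-variables and of controlled degree in $\tau$; this is routine. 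The substance is (i), which I would prove by treating $T=0$ and $T\neq0$ separately.

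On the hyperplane $T=0$ a direct computation gives $\tilde H_\tau|_{T=0}=P$ and $\partial_{S_i}H_\tau|_{T=0}=\partial_{S_i}P$, so the proportionality equation becomes vacuous while $\partial_TH_\tau|_{T=0}$ equals the $\tau$-independent form
\[
\Phi:=\bar\lambda(\boldsymbol{\mu}\cdot\mathbf{S})\bigl(nX^{2n-2}+f\bigr)+h_1\partial_{S_1}P+h_2\partial_{S_2}P;
\]
hence $\mathcal{L}\cap\{T=0\}=\bigl(C_0\cap\{\Phi=0\}\bigr)\times\AA^1_\tau$ with $C_0=\{P=0\}\subset\PP^2_{[X:S_1:S_2]}$, and it suffices to see that $C_0$ and $\{\Phi=0\}$ share no component. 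The key input is that $\mathcal{Q}(\xi):=\xi^n+\xi f+g\in\bar\FF_q[\xi,S_1,S_2]$ (with $\xi$ of weight $2$) is \emph{irreducible}: separability of $g$ with $n\geq3$ forces $\disc_\xi\mathcal{Q}$ — equal up to a unit to $n^ng^{n-1}\mp(n-1)^{n-1}f^n$ — to be non-zero, since otherwise $g^{n-1}$ and $f^n$ would be proportional and $g$ would have a root of multiplicity $\geq n$; this makes $\mathcal{Q}$ squarefree, and any factorization $\mathcal{Q}=rr'$ into coprime non-units would yield, by B\'ezout in $\PP(2,1,1)\cong\PP^2$, a common zero of $r,r'$ at which $\partial_X\mathcal{Q},\partial_{S_1}\mathcal{Q},\partial_{S_2}\mathcal{Q}$ all vanish, i.e.\ a singular point of $S$, contrary to hypothesis. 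Since $P=\mathcal{Q}(X^2)$ is then squarefree and $\mathcal{Q}$ is monic of $\xi$-degree $n$, every irreducible factor of $P$ has $X$-degree $\geq n\geq3$. On the other hand, using $\partial_XP=2X(nX^{2n-2}+f)$ and $X\nmid P$, a common component $\{q=0\}$ of $C_0$ and $\{\Phi=0\}$ would satisfy $\bar\lambda(\boldsymbol{\mu}\cdot\mathbf{S})\partial_Xq+2X(h_1\partial_{S_1}q+h_2\partial_{S_2}q)=cq$ for some constant $c$, i.e.\ $q$ would be an irreducible semi-invariant of the linear vector field $D=\bar\lambda(\boldsymbol{\mu}\cdot\mathbf{S})\partial_X+2X(h_1\partial_{S_1}+h_2\partial_{S_2})$ on $\PP^2$, whose matrix is nilpotent or is diagonalizable with eigenvalues $0,\pm\omega$, $\omega^2=2\bar\lambda(\boldsymbol{\mu}\cdot\mathbf{h})\neq0$. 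A short analysis of the semi-invariants of $D$ in suitable coordinates shows that every irreducible one has degree at most $2$; since $2<n$, no irreducible factor of $P$ is such a $q$, so $C_0$ and $\{\Phi=0\}$ share no component and $\mathcal{L}\cap\{T=0\}$ has dimension $\leq1$.

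On the open locus $T\neq0$ I set $T=1$; writing $\xi=X^2$ and $\mathbf{S}+\mathbf{h}=(S_1+h_1,S_2+h_2)$ one checks that all four equations depend on $X$ only through $\xi$, and after the dimension-preserving substitution $\xi=X^2$ they become $\mathcal{Q}(\xi,\mathbf{S})=0$, $\mathcal{Q}\bigl(\xi+\bar\lambda(\boldsymbol{\mu}\cdot\mathbf{S})-\bar\lambda\tau,\mathbf{S}+\mathbf{h}\bigr)=0$ and two further derivative relations, inside $\AA^4_{(\xi,S_1,S_2,\tau)}$. The second equation has degree $n$ in $\tau$, so the projection to $\AA^3_{(\xi,S_1,S_2)}$ has finite fibres and it remains to show that its image is a proper subvariety of the irreducible surface $\{\mathcal{Q}(\xi,\mathbf{S})=0\}$ — equivalently, that for a generic point of that surface the three polynomials in $\tau$ coming from the last three equations have no common root. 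I would prove this by the same mechanism as for $T=0$: a common root persisting over the whole surface would force one of the two irreducible polynomials $\mathcal{Q}(\xi,\mathbf{S})$, $\mathcal{Q}(\xi+\bar\lambda(\boldsymbol{\mu}\cdot\mathbf{S})-\bar\lambda\tau,\mathbf{S}+\mathbf{h})$ to acquire a low-degree factor after base change — equivalently, the two "sheets" $\{\mathcal{Q}(\xi,\mathbf{S})=0\}$ and $\{\mathcal{Q}(\xi',\mathbf{S}+\mathbf{h})=0\}$ to be simultaneously critical for the linear functional $\lambda(\gamma^iX^2-\gamma^jY^2)+\boldsymbol{\mu}\cdot\mathbf{S}$ on $V_\tau$ — and then, exactly as in the factorization argument above, a singular point of $S$. \emph{This last step, extracting from the smoothness of $S$ a transversality statement valid uniformly in $\tau$, is the part I expect to be the main obstacle}; the $T=0$ analysis, the semi-invariant computation and the B\'ezout count in (ii) are comparatively routine.
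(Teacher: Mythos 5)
Your high-level strategy---package the $\tau$-dependence into an incidence variety, bound its dimension by~$1$ and its number of components, then observe that non-vertical components meet each fibre finitely---is sound, and it is in the same spirit as the paper's proof. But you have explicitly left the substance undone: you write that establishing the transversality needed for the $T\neq 0$ analysis (that the projection of your affine system in $(\xi,S_1,S_2,\tau)$ to the $\xi S_1S_2$-space has image a proper subvariety of $\{\mathcal{Q}(\xi,\mathbf{S})=0\}$) ``is the part I expect to be the main obstacle.'' That is exactly the core of the lemma, and it is genuinely nontrivial. The paper handles it by a concrete trick you have not found: introduce the auxiliary variable $Z=\overline{\lambda}(\boldsymbol{\mu}\cdot\mathbf{S}-\tau T)$, which on the chart $T\neq 0$ is in bijection with $\tau$. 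This turns your incidence variety over $\AA^1_\tau$ into a single, $\tau$-independent variety $V\subset\PP^5_{\FF_q}$ cut out by four polynomial equations in $[X:Z:S_1:S_2:T]$ (the system \eqref{eq : sys2} with the last equation dropped). The difficult dimension statement now has to be proved only once, for $V$, and that is precisely Proposition~\ref{prop : dim1}, whose proof (via Lemma~\ref{lem:teramo} and Lemma~\ref{lem : polf}) occupies the rest of the section. Once $\dim V=1$ is known, the $\tau$-dependence returns only through intersecting $V$ with the hyperplane $Z=\overline{\lambda}(\boldsymbol{\mu}\cdot\mathbf{S}-\tau T)$, and each component of $V$ not contained in $T=0$ lies in such a hyperplane for at most one value of $\tau$---whence at most $32n^5$ exceptional $\tau$. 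Your sketch gestures at recovering a singular point of $S$ ``exactly as in the factorization argument above,'' but that step has no argument attached, and nothing in your outline plays the role of the $Z$-substitution that makes the bookkeeping polynomial and $\tau$-independent.

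Two further, more minor, discrepancies. First, your $T=0$ analysis via semi-invariants of the linear vector field $D=\overline{\lambda}(\boldsymbol{\mu}\cdot\mathbf{S})\partial_X+2X(h_1\partial_{S_1}+h_2\partial_{S_2})$ is both heavier and less verified than what the paper does: the paper simply observes that on $T=0$ the relevant equation is a non-zero form of degree $2n-1$ while $P$ has degree $2n$, and invokes B\'ezout; your route requires the unproved assertion that every irreducible semi-invariant of $D$ has degree at most~$2$, and also relies on the claim that $\PP(2,1,1)\cong\PP^2$, which is false---$\PP(2,1,1)$ is the quadric cone (one would instead pass through the double cover $\PP^2\to\PP(2,1,1)$, $[z_0:z_1:z_2]\mapsto[z_0^2:z_1:z_2]$, as the paper does elsewhere). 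Second, your proposed B\'ezout count for the number of components of $\mathcal{L}$ in $\PP^3\times\AA^1_\tau$ must also track the degree of $\tilde H_\tau$ in $\tau$ (which is $n$), so it is not clear that it reproduces the constant $32n^5$ appearing in Proposition~\ref{prop:tau}; the paper's count is performed on the $\tau$-free variety $V\subset\PP^4$, where the product of the degrees gives $4n^2(2n-1)^3\leq 32n^5$ directly.
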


Taken together, Lemmas \ref{lem:K} and \ref{lem:L} complete the proof of 
Proposition \ref{prop:tau}.

\begin{proof}[Proof of Lemma \ref{lem:K}] 
We start
 by considering $K_\tau^{(2)}$, which we wish to show has dimension at most $0$. 
  To this end it is enough to show that it does not intersect the hyperplane $T=0$.
In view of \eqref{eq:system-K2}, 
 the  point  $[\y,0]$ lies on $K_\tau^{(2)}$ if and only if 
\[ 
\begin{cases}
0=v=t,\\
0=-u^{2n}+x^{2n}+  x^{2} f(s_{1},s_{2})+g(s_{1},s_{2})\\ 0=x^{2n}+x^{2} f(s_{1},s_{2}) +g(s_{1},s_{2}),\\
0=2nx^{2n-1} +2xf(s_{1},s_{2}),\\
0=x^{2}\frac{\partial f}{\partial S_{1}}(s_{1},s_{2})+\frac{\partial g}{\partial S_{1}}(s_{1},s_{2}),\\
0=x^{2}\frac{\partial f}{\partial S_{2}}(s_{1},s_{1})+\frac{\partial g}{\partial S_{2}}(s_{1},s_{2}),\\
0=n\overline{\lambda}(\boldsymbol{\mu}\cdot\mathbf{s}) x^{2n-2}+\overline{\lambda}(\boldsymbol{\mu}\cdot\mathbf{s})f_{\mathbf{h}}(s_{1},s_{2},0)+ x^{2} \frac{\partial f_{\mathbf{h}}}{\partial T}(s_{1},s_{2},0)+ \frac{\partial g_{\mathbf{h}}}{\partial T}(s_{1},s_{2},0).
\end{cases}
\]
Since $p\nmid 2n\Delta_{f,g}$, it also follows that the plane curve  
$$
X^{2n}+X^{2}f(S_{1},S_{2})+g(S_{1},S_{2})=0
$$ in $\PP_{\FF_q}^{2}$ is  smooth. Hence the 3rd, 4th, 5th and 6th equation together imply that 
$x=s_{1}=s_{2}=0$. But then the 2nd equation implies that $u=0$ and this proves that the intersection of $K_\tau^{(2)}$ with the hyperplane $T=0$ is empty, whence $\dim (K_\tau^{(2)})\leq 0$.
We obtain $\dim (K_\tau^{(1)})\leq 0$ by repeating the same argument 
with \eqref{eq:system-K1}  
and 
switching the role of $u$ and $v$.
\end{proof}

\begin{proof}[Proof of Lemma \ref{lem:L}] 
We begin by showing that  $L_\tau$ has finitely many points  with $t=0$.
When $T=0$, the system \eqref{eq : sys0} becomes
\[ 
\begin{cases}
0=X^{2n}+  X^{2} f(S_{1},S_{2})+g(S_{1},S_{2}),\\
0=n\overline{\lambda}(\boldsymbol{\mu}\cdot\mathbf{S})X^{2n-2}+\overline{\lambda}(\boldsymbol{\mu}\cdot\mathbf{S})f+X^{2} \frac{\partial f_{\mathbf{h}}}{\partial T}(S_{1},S_{2},0)+ \frac{\partial g_{\mathbf{h}}}{\partial T}(S_{1},S_{2},0),\\
0=T=U=V.
\end{cases}
\]
Note that for any binary form $F\in \FF_q[S_1,S_2]$  we have 
\[
\begin{split}
\frac{\partial G}{\partial T}(S_{1}+h_{1}T,S_{2}+h_{2}T),
&=\sum_{i=1,2}h_{i}\frac{\partial G}{\partial S_{i}}(S_{1}+h_{1}T,S_{2}+h_{2}T),\\
&=(\mathbf{h}\cdot\nabla G)(S_{1}+h_{1}T,S_{2}+h_{2}T).
\end{split}
\]
Thus the  system becomes
\[ 
\begin{cases}
0=X^{2n}+  X^{2} f(S_{1},S_{2})+g(S_{1},S_{2}),\\
0=n\overline{\lambda}(\boldsymbol{\mu}\cdot\mathbf{S})X^{2n-2}+\overline{\lambda}(\boldsymbol{\mu}\cdot\mathbf{S})f+ X^{2}(\mathbf{h}\cdot\nabla f)(S_{1},S_{2})+ (\mathbf{h}\cdot\nabla g)(S_{1},S_{2}),\\ 
0=T=U=V.
\end{cases}
\]
If   $\boldsymbol{\mu}\neq\boldsymbol{0}$ the monomial $(\boldsymbol{\mu}\cdot\mathbf{S})X^{2n-2}$ does not vanish identically. If $\boldsymbol{\mu}=\boldsymbol{0}$, then $\mathbf{h}\cdot\nabla g$ does not vanish identically, since 
then $\mathbf{h}\neq \boldsymbol{0}$. 
  Thus the 2nd equation involves 
a non zero polynomial of degree $2n-1$ in $X,S_{1},S_{2}$. 
On the other hand, the 1st equation defines an irreducible form of  degree $2n$, 
and so the system meets in at most $4n^{2}-2n$ points by B\'ezout's theorem.

Let us now count the solutions of system \eqref{eq : sys0} with $t\neq 0$. We shall introduce a further variable 
$
Z=\overline{\lambda}(\mu\cdot \mathbf{S}-\tau T),
$
leading us to study the  system
\begin{equation} 
\begin{cases}
0=X^{2n}+  X^{2} f(S_{1},S_{2})+g(S_{1},S_{2}),\\
0=(X^{2}+TZ)^{n}+  (X^{2}+TZ) f_{\h}
+g_{\h},\\  
0=n(2Z-\overline{\lambda}(\boldsymbol{\mu}\cdot\mathbf{S}))(X^{2}+TZ)^{n-1}+(2Z-\overline{\lambda}(\boldsymbol{\mu}\cdot\mathbf{S}))f_{\h}
\\
\qquad +(X^{2}+TZ)\frac{\partial f_{\mathbf{h}}}{\partial T}+\frac{\partial g_{\mathbf{h}}}{\partial T},\\
0=(X^{2}\frac{\partial f}{\partial S_{1}}+\frac{\partial g}{\partial S_{1}})((X^{2}+TZ)\frac{\partial f_{\h}}{\partial S_{2}}+\frac{\partial g_{\h}}{\partial S_{2}})\\
\qquad -(X^{2}\frac{\partial f}{\partial S_{2}}+\frac{\partial g}{\partial S_{2}})((X^{2}+TZ)\frac{\partial f_{\mathbf{h}}}{\partial S_{1}}+\frac{\partial g_{\mathbf{h}}}{\partial S_{1}}),\\
0=Z-\overline{\lambda}(\boldsymbol{\mu}\cdot \mathbf{S}-\tau T) .
\end{cases}
\label{eq : sys2}
\end{equation}
Let $V\subset\PP_{\FF_q}^{4}$ be the zero set of the first four equations. This variety 
does not depend on $\tau$.  We shall prove below in Proposition \ref{prop : dim1} that 
$\dim(V)=1$.

Let 
$V_{1},\dots ,V_{k}$ be the irreducible components of $V$
 that are not contained in the hyperplane $T=0$. 
 Using the form of B\'ezout's theorem found in Example 8.4.6 of Fulton \cite{fulton}, one can bound $k$ by the product of the degrees of the forms defining $V$ in $\PP_{\FF_q}^4$. Thus 
 $$k\leq 4n^2(2n-1)^3\leq 32n^5.
 $$
 Moreover, Proposition \ref{prop : dim1} implies that $\dim (V_i)\leq 1$ for $1\leq i\leq k$. 
 For any $i\in \{1,\dots,k\}$ let 
$v_i=[x_{i},z_{i},s_{1,i},s_{2,i},t_{i} ]\in V_{i}$ be such that $t_{i}\neq 0$.  
Then $v_i$ lies on the hyperplane  $
Z=\overline{\lambda}(\mu\cdot \mathbf{S}-\tau T))$ if and only if
\[
z_{i}=\overline{\lambda}(\boldsymbol{\mu}\cdot\mathbf{s}_{i}-\tau t_{i}).
\]
This is  true for at most a single value of $\tau$, since $t_{i}\neq 0$. Hence for all but at most $k\leq 32n^5$  exceptional   $\tau$, we may conclude that 
$V_i$ is not contained in the hyperplane 
$Z=\overline{\lambda}(\mu\cdot \mathbf{S}-\tau T)$,  for  $i\in \{1,\dots,k\}$, 
which implies that the intersection of $V_i$ with the hyperplane 
$Z=\overline{\lambda}(\mu\cdot \mathbf{S}-\tau T)$ has dimension $0$. 
 It follows 
that the system \eqref{eq : sys2} has finitely many solutions with $t\neq 0$,  for all but at
most $32n^5$ values of  $\tau$.
\end{proof}

It remains to prove that that the dimension of $V$ is  $1$. 
For this we shall  require the following preliminary facts.

\begin{lem}
Let $F\in\mathbb{F}_{q}[S_{1},S_{2}]$ be a separable polynomial of degree $d$ and let 
 $P\subset \PP_{\FF_q}^1$ be its zero locus. 
Then
\begin{itemize}
\item[(i)] For any $[a,b],[c,d]\in P$ we have 
\[
\frac{\partial F}{\partial S_{1}}(a,b)\frac{\partial F}{\partial S_{2}}(c,d)-\frac{\partial F}{\partial S_{2}}(a,b)\frac{\partial F}{\partial S_{1}}(c,d)=0
\]
if and only if $[a,b]=[c,d]$.
\item[(ii)] For any $h_{1},h_{2},s_{1},s_{2}\in\mathbb{F}_{q}$ with $[s_{1},s_{2}]\neq [h_{1},h_{2}]$, let
$$G(T)=F(s_{1}+h_{1}T,s_{2}+h_{2}T).
$$ Then $G$ is non-constant and separable, with 
$$
\deg(G)=\begin{cases}
d &\text{ if $[h_{1},h_{2}]\not\in P$,}\\
d-1 &\text{ if $[h_{1},h_{2}]\in P$.}
\end{cases}
$$
Furthermore, if $t,t'$ are distinct roots of $G$ then 
\[
[s_{1}+h_{1}t ,s_{2}+h_{2}t]\neq [s_{1}+h_{1}t' ,s_{2}+h_{2}t'].
\]
\end{itemize}
\label{lem : polf}
\end{lem}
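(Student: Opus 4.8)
The plan is to treat the two parts separately, using the separability of $F$ throughout. For part (i), I would first dispose of the degenerate case where one of the partial derivatives vanishes. If $[a,b]\in P$ then $\gcd(F,\partial F/\partial S_1,\partial F/\partial S_2)=1$ by separability (a repeated factor would kill both partials by Euler's relation $dF = S_1\partial F/\partial S_1 + S_2\partial F/\partial S_2$ together with $F=0$), so $(\partial F/\partial S_1(a,b),\partial F/\partial S_2(a,b))\neq(0,0)$; similarly for $[c,d]$. The displayed expression is the determinant of the $2\times2$ matrix with rows the gradient vectors at $[a,b]$ and at $[c,d]$, so it vanishes iff these two nonzero vectors are proportional. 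Geometrically, the gradient at a point of $P$ is (the class of) the \emph{conormal direction} to the hypersurface $F=0$ at that point, i.e.\ it records which point of $\PP^1$ we are at. Concretely, writing $F=c\prod_{i}(\beta_i S_1-\alpha_i S_2)$ with the $[\alpha_i,\beta_i]$ distinct, one computes $\nabla F$ at a root $[\alpha_j,\beta_j]$ and sees it is a nonzero multiple of $(\beta_j,-\alpha_j)$, hence its projective class determines $[\alpha_j,\beta_j]$ uniquely; so proportionality of the two gradients forces $[a,b]=[c,d]$, and the converse is trivial. The main care needed here is the characteristic-$p$ subtlety: $\deg F$ may be divisible by $p$, so one should not divide by $d$, but the factorization argument sidesteps this.

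For part (ii), set $G(T)=F(s_1+h_1T,s_2+h_2T)$, the restriction of $F$ to the line $\ell$ through $[s_1,s_2]$ with direction $[h_1,h_2]$ in $\PP^1_{(S_1,S_2)}$. (Here I am thinking of $(S_1:S_2)$ as homogeneous coordinates and of $T\mapsto[s_1+h_1T:s_2+h_2T]$ as a parametrization of $\PP^1$ missing the point $[h_1,h_2]$, valid precisely because $[s_1,s_2]\neq[h_1,h_2]$.) Then $G$ is (up to a constant) $\prod_i\bigl((\beta_i s_1-\alpha_i s_2)+(\beta_i h_1-\alpha_i h_2)T\bigr)$; the factor coming from $[\alpha_i,\beta_i]$ is a nonzero constant when $[h_1,h_2]=[\alpha_i,\beta_i]$ and is genuinely linear in $T$ otherwise. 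Hence $\deg G = d$ if $[h_1,h_2]\notin P$ and $d-1$ if $[h_1,h_2]\in P$; it is non-constant because $P$ has $\deg F=d\geq1$ points but at most one of them equals $[h_1,h_2]$. Separability of $G$ follows because the linear factors are pairwise non-proportional: two of them agree iff the corresponding $[\alpha_i,\beta_i]$ coincide, which they do not. The final assertion is exactly the statement that the parametrizing map $T\mapsto[s_1+h_1T:s_2+h_2T]$ is injective on $\AA^1$, which is clear since it is a linear-fractional parametrization of $\PP^1$, or directly: $[s_1+h_1t:s_2+h_2t]=[s_1+h_1t':s_2+h_2t']$ with the cross-ratio computation forcing $(t-t')(\beta s_1-\alpha s_2+\cdots)$-type relations that collapse to $t=t'$ once one uses $[s_1,s_2]\neq[h_1,h_2]$.

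The only genuinely delicate point is making sure everything is stated in a way that is insensitive to $p\mid d$, which is why I favour the explicit linear-factorization of the separable binary form $F$ over any argument that divides by the degree; once that factorization is in hand, both parts reduce to the elementary fact that distinct points of $\PP^1$ have distinct (projective) coordinate classes. A secondary bookkeeping point is the convention for "degree" when one of the $[\alpha_i,\beta_i]$ is the point at infinity $[1,0]$ of $\PP^1_{(S_1,S_2)}$, i.e.\ when $S_2\mid F$; this is handled uniformly by the same factorization, with the factor $(\beta_i S_1-\alpha_i S_2)$ becoming $S_1$.
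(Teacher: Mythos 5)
Your proposal is correct and takes essentially the same approach as the paper: normalize so that $F$ factors into distinct linear forms and then compute directly. Your treatment of part (ii) is slightly cleaner than the paper's — you observe that the factorization of $F$ immediately yields the factorization $G(T)=c\prod_i\bigl((\beta_i s_1-\alpha_i s_2)+(\beta_i h_1-\alpha_i h_2)T\bigr)$, from which degree, separability, and the injectivity claim all drop out, whereas the paper instead solves the system $s_1+h_1 t=\nu_1$, $s_2+h_2 t=\nu_2\alpha$ for $t$ in terms of each $\alpha$ and then compares distinct $\alpha$'s; but these are the same computation presented differently, and both hinge on the nondegeneracy condition $s_1h_2-s_2h_1\neq 0$.
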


\begin{proof}
Without loss of generality we may assume that $P$ consists of points  
$$
[1,\alpha_{1}],
\dots ,[1,\alpha_{d}],
$$ 
for distinct  $\alpha_1,\dots,\alpha_d\in \overline\FF_q$.  It follows that $F=\prod_{i=1}^{d}(S_{1}\alpha_{i}-S_{2})$, so that 
\[
\frac{\partial F}{\partial S_{1}}=\sum_{i=1}^{d}\alpha_{i}\prod_{j\neq i}(S_{1}\alpha_{j}-S_{2}),\quad \frac{\partial F}{\partial S_{2}}=-\sum_{i=1}^{d}\prod_{j\neq i}(S_{1}\alpha_{j}-S_{2}).
\] 
If $[1,\alpha_{k}],[1,\alpha_{m}]\in P$ then
\begin{align*}
0&=\frac{\partial F}{\partial S_{1}}(1,\alpha_{k})\frac{\partial F}{\partial S_{2}}(1,\alpha_{m})-\frac{\partial F}{\partial S_{2}}(1,\alpha_{k})\frac{\partial F}{\partial S_{1}}(1,\alpha_{m})\\
&=(\alpha_{k}-\alpha_{m})\left(\prod_{j\neq k}(\alpha_{j}-\alpha_{k})\right)\left(\prod_{j\neq m}(\alpha_{j}-\alpha_{m})\right)
\end{align*}
if and only if $\alpha_{k}=\alpha_{m}$.  This establishes part (i).

Turning to  part (ii),
we first  assume that $[h_1,h_2]\not\in P$. Then $t$ is a root of $g$ 
if and only if either $s_{1}+h_{1}t=s_{2}+h_{2}t=0$ or $[s_{1}+h_{1}t ,s_{2}+h_{2}t]\in P$. Hence $t$ is a root of $g$ if and only if there exists $[1,\alpha]\in P$ and $\nu_{1},\nu_{2}\in\mathbb{F}_{p}$ such that
\begin{equation}\label{eq:gelato}
\begin{cases}
s_{1}+h_{1}t=\nu_{1}\\
s_{2}+h_{2}t=\nu_{2}\alpha .
\end{cases}
\end{equation}
But then 
\[
t=\frac{\alpha s_{1}-s_{2}}{h_{2}-\alpha h_{1}}.			
\]
Moreover, for $[1,\alpha_{1}],[1,\alpha_{2}]\in P$ we get that
\[
(\alpha_{1} s_{1}-s_{2})(h_{2}-\alpha_{2} h_{1})=(\alpha_{2} s_{1}-s_{2})(h_{2}-\alpha_{1} h_{1})
\]
if and only if $(\alpha_{1} -\alpha_{2})(s_{1}h_{2}-s_{2}h_{1})=0$, which is if and only if   $\alpha_{1}=\alpha_{2}$,  since we are assuming $[s_{1},s_{2}]\neq [h_{1},h_{2}]$. The  result follows since $f$ is of degree $d$ and separable. Suppose next that   $[h_1,h_2]\in P$. Then we  repeat the same argument, but  observe that the system \eqref{eq:gelato}
is not solvable in the case $[1,\alpha]=[h_{1},h_{2}]$.
The final claim in 
part (iii)  is a   direct consequence of  our argument.
\end{proof}

Recall that 
 $V\subset \PP_{\FF_q}^4$ is given by 
\begin{equation}\label{eq:napoli}
\begin{cases}
0=X^{2n}+  X^{2} f(S_{1},S_{2})+g(S_{1},S_{2}),\\
0=(X^{2}+TZ)^{n}+  (X^{2}+TZ) f_{\h}
+g_{\h},\\  
0=n(2Z-\overline{\lambda}(\boldsymbol{\mu}\cdot\mathbf{S}))(X^{2}+TZ)^{n-1}+(2Z-\overline{\lambda}(\boldsymbol{\mu}\cdot\mathbf{S}))f_{\h}
\\
\qquad +(X^{2}+TZ)\frac{\partial f_{\mathbf{h}}}{\partial T}+\frac{\partial g_{\mathbf{h}}}{\partial T},\\
0=(X^{2}\frac{\partial f}{\partial S_{1}}+\frac{\partial g}{\partial S_{1}})((X^{2}+TZ)\frac{\partial f_{\h}}{\partial S_{2}}+\frac{\partial g_{\h}}{\partial S_{2}})\\
\qquad -(X^{2}\frac{\partial f}{\partial S_{2}}+\frac{\partial g}{\partial S_{2}})((X^{2}+TZ)\frac{\partial f_{\mathbf{h}}}{\partial S_{1}}+\frac{\partial g_{\mathbf{h}}}{\partial S_{1}}).
\end{cases}
\end{equation}
We shall prove the following result.

\begin{prop}\label{prop : dim1}
Assume that $p\nmid 2n\Delta_{f,g}$. Then  $\dim(V)=1$.
\end{prop}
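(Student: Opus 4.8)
The plan is to prove both $\dim(V)\ge 1$ and $\dim(V)\le 1$, working over $\overline{\FF}_q$. For the lower bound, restrict \eqref{eq:napoli} to the hyperplane $T=0$: its first two polynomials both reduce to $P:=X^{2n}+X^{2}f(S_1,S_2)+g(S_1,S_2)$, the fourth becomes the trivial identity, and the third becomes $2Z(nX^{2n-2}+f)=\overline\lambda(\boldsymbol\mu\cdot\mathbf S)(nX^{2n-2}+f)-X^{2}(\mathbf h\cdot\nabla f)-(\mathbf h\cdot\nabla g)$, all evaluated at $(S_1,S_2)$. Since $p\nmid 2n$, the form $nX^{2n-2}+f$ has smaller degree than, so does not vanish on, the irreducible plane curve $C:=\{P=0\}\subset\PP^{2}$; hence this equation determines $Z$ rationally over $C$ and cuts out a curve inside $V\cap\{T=0\}$. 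For the upper bound, put $\ell=\{X=S_1=S_2=0\}$: on $\ell$ the second equation of \eqref{eq:napoli} is $T^{n}(Z^{n}+ZT^{n-1}f(\mathbf h)+T^{n}g(\mathbf h))$, a nonzero binary form, so $V\cap\ell$ is finite. Away from $\ell$, use the linear projection $\pi\colon[X:S_1:S_2:T:Z]\mapsto[X:S_1:S_2]$; by the first equation $\pi$ sends $V\setminus\ell$ into $C$, which is smooth — hence irreducible — because $p\nmid 2n\Delta_{f,g}$ (cf. the proof of Lemma~\ref{lem:K}), and the fibres of $\pi$ are affine planes with coordinates $(t,z)$. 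In the fibre over $p=[x:s_1:s_2]$ the second equation of \eqref{eq:napoli} has $z^{n}$-coefficient $t^{n}$, so is a nonzero polynomial in $(t,z)$, and every fibre of $\pi|_{V\setminus\ell}$ has dimension $\le 1$. Since $C$ is irreducible, it therefore suffices to show the fibre over a generic $p\in C$ is finite: granting this, every component of $V\setminus\ell$ either dominates $C$ with finite generic fibre or maps to a point of $C$, so has dimension $\le 1$ and $\dim(V)=1$.

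To prove finiteness of the generic fibre, fix $p=[x:s_1:s_2]\in C$ and write $w=x^{2}+tz$, $\mathbf s'(t)=(s_1+h_1t,s_2+h_2t)$ and $A_i=x^{2}\tfrac{\partial f}{\partial S_i}(s_1,s_2)+\tfrac{\partial g}{\partial S_i}(s_1,s_2)$. In the fibre the second and fourth equations of \eqref{eq:napoli} become
$$
w^{n}+wf(\mathbf s'(t))+g(\mathbf s'(t))=0,\qquad w\,\alpha(t)+\beta(t)=0,
$$
with $\alpha(t)=A_1\tfrac{\partial f}{\partial S_2}(\mathbf s'(t))-A_2\tfrac{\partial f}{\partial S_1}(\mathbf s'(t))$ and $\beta(t)=A_1\tfrac{\partial g}{\partial S_2}(\mathbf s'(t))-A_2\tfrac{\partial g}{\partial S_1}(\mathbf s'(t))$. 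For $p$ outside a finite subset of $C$ one has $(A_1,A_2)\neq(0,0)$, $\alpha\not\equiv 0$, $nx^{2n-2}+f(s_1,s_2)\neq 0$, and — if $\mathbf h\neq\boldsymbol 0$ — $[s_1:s_2]\neq[h_1:h_2]$; were any of these to fail identically on $C$ one would obtain a polynomial identity excluded either because $C$ is not a line or because $g$ is separable (which prevents $f$ and $g$ from being powers of a common linear form; indeed $\alpha(0)$ equals, up to sign, $(\tfrac{\partial f}{\partial S_1}\tfrac{\partial g}{\partial S_2}-\tfrac{\partial f}{\partial S_2}\tfrac{\partial g}{\partial S_1})(s_1,s_2)$). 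The slice $t=0$ of the fibre satisfies both displayed equations but is cut to a single point by the third equation of \eqref{eq:napoli}. On $t\neq 0$, the locus $w\alpha(t)+\beta(t)=0$ is a curve whose components are the graph $w=-\beta(t)/\alpha(t)$ and finitely many vertical lines $t=t_0\neq0$; such a vertical line cannot lie on $w^{n}+wf(\mathbf s'(t))+g(\mathbf s'(t))=0$ (leading $w$-term $w^{n}$). If the graph did, then — since $t\mapsto[s_1+h_1t:s_2+h_2t]$ parametrises $\PP^{1}$ with degree one — the map $t\mapsto(-\beta(t)/\alpha(t);\mathbf s'(t))$ would be a rational section of the degree-$n$ projection $C'\to\PP^{1}$, $[W:S_1:S_2]\mapsto[S_1:S_2]$, where $C'=\{W^{n}+Wf(S_1,S_2)+g(S_1,S_2)=0\}\subset\PP(2,1,1)$ is the image of the irreducible curve $C$ under $[X:S_1:S_2]\mapsto[X^{2}:S_1:S_2]$, hence irreducible; but then $W^{n}+Wf+g$ would be reducible over the function field of $\PP^{1}$, contradicting $n\ge 3$. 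So no such section exists and the generic fibre is finite.

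The case $\mathbf h=\boldsymbol 0$ is handled directly: then $\alpha$ and $\beta$ are constant and the fourth equation of \eqref{eq:napoli} becomes $TZ$ times a nonzero form in $S_1,S_2$, whose components $T=0$ and $Z=0$ are cut down by the third equation. The step I expect to be the main obstacle is the one just sketched — excluding the rational section of $C'\to\PP^{1}$, equivalently showing that $W^{n}+Wf+g$ acquires no linear factor over the function field of $\PP^1$ once $[S_1:S_2]$ is taken generic. This is exactly where the standing hypotheses are essential: $p\nmid 2n\Delta_{f,g}$ supplies the smoothness, hence the irreducibility, of $C$ and of $C'$, and the separability of $g$ excludes the degenerate identities used above. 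It is also where Lemma~\ref{lem : polf} would enter in a counting variant of the argument, through tracking the distinct values of $t$ for which $\mathbf s'(t)$ lies in the zero locus of $g$.
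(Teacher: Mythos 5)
Your approach is genuinely different from the paper's, and mostly sound, but it has one real gap that the paper is careful to head off.

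\textbf{Where you differ from the paper.} The paper multiplies the second equation of \eqref{eq:napoli} by $U_1^n$ and uses \eqref{eq:venice} to produce a polynomial identity in $X,S_1,S_2,T$ alone; the whole burden is then on Lemma~\ref{lem:teramo}, which says that $X^{2n}+X^2f+g$ does not divide $U_2^n+U_2U_1^{n-1}f_{\h}+U_1^n g_{\h}$, so the first two (modified) equations already cut out a curve in $\PP^3$, and the third equation is only used to kill $Z$ afterwards. You instead fibre $V\setminus\ell$ over the plane curve $C$ via $\pi$ and reduce to showing the generic fibre is finite, using the fourth equation (the graph $w=-\beta/\alpha$) against the second (the hyperelliptic relation), and ruling out a rational section of $C'\to\PP^1$. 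The projection/generic-fibre framework and the section argument are an entirely different route to the same dimension bound, and one advantage is that you never need a divisibility lemma; the tradeoff is that you need several genericity claims about points of $C$, each of which has to be verified.

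\textbf{The gap.} Your genericity claim ``$\alpha\not\equiv 0$ for $p$ outside a finite subset of $C$'' rests on the identity
\[
\alpha(0)=\frac{\partial g}{\partial S_1}\frac{\partial f}{\partial S_2}-\frac{\partial g}{\partial S_2}\frac{\partial f}{\partial S_1}\quad\text{evaluated at }(s_1,s_2),
\]
and on this Jacobian being a nonzero form. But when $f\equiv 0$ (a case the hypotheses of Theorem~\ref{thm : dp1} allow, and which the paper's proof of Proposition~\ref{prop : dim1} treats first and separately) this Jacobian is identically zero, $\alpha\equiv 0$ for \emph{every} $p\in C$, and the fourth equation in the fibre degenerates to $\beta(t)=0$. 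Your ``graph plus vertical lines'' decomposition, and in particular the whole rational-section argument, then has nothing to bite on. The repair is not hard --- when $f\equiv 0$ and $\h\neq\0$ one can show $\beta\not\equiv 0$ for generic $p$ from separability of $g$ and the fact that $t\mapsto[s_1+h_1t:s_2+h_2t]$ parametrises $\PP^1$; and when $f\equiv 0$ and $\h=\0$ the fourth equation vanishes identically, $\boldsymbol\mu\neq\0$, and one has to argue directly from equations 1--3 --- but as written the proposal silently assumes $f\not\equiv 0$. The paper's choice to isolate $f\equiv 0$ at the outset is precisely to avoid this trap, and your write-up should do the same.

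A smaller issue: the $\h=\0$ paragraph claims the components $T=0$ and $Z=0$ of the fourth equation are ``cut down by the third equation'' without saying how; in fact on $Z=0$ the first and second equations coincide and the third equation factors as $-\bar\lambda(\boldsymbol\mu\cdot\mathbf S)(nX^{2n-2}+f)=0$, which needs $\boldsymbol\mu\neq\0$ (guaranteed here since $\h=\0$) and the separability input for $nX^{2n-2}+f$ not to vanish on $C$. You should spell this out rather than assert it.
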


We can rewrite the final equation in \eqref{eq:napoli} as 
\begin{equation}\label{eq:venice}
(X^{2}+TZ)U_{1}=U_{2},
\end{equation}
where
\begin{equation}\label{eq:cornetto}
\begin{split}
&U_{1}=X^{2}\left(\frac{\partial f_{\mathbf{h}}}{\partial S_{2}}\frac{\partial f}{\partial S_{1}}-\frac{\partial f_{\mathbf{h}}}{\partial S_{1}}\frac{\partial f}{\partial S_{2}}\right)+\left(\frac{\partial f_{\mathbf{h}}}{\partial S_{2}}\frac{\partial g}{\partial S_{1}}-\frac{\partial f_{\mathbf{h}}}{\partial S_{1}}\frac{\partial g}{\partial S_{2}}\right),\\& U_{2}=X^{2}\left(\frac{\partial g_{\mathbf{h}}}{\partial S_{2}}\frac{\partial f}{\partial S_{1}}-\frac{\partial g_{\mathbf{h}}}{\partial S_{1}}\frac{\partial f}{\partial S_{2}}\right)+\left(\frac{\partial g_{\mathbf{h}}}{\partial S_{2}}\frac{\partial g}{\partial S_{1}}-\frac{\partial g_{\mathbf{h}}}{\partial S_{1}}\frac{\partial g}{\partial S_{2}}\right).
\end{split}
\end{equation}
We shall first prove  Proposition \ref{prop : dim1}  in the case where $f$ is identically zero. Then  $U_1=0$ and 
$$
U_2=\frac{\partial g_{\mathbf{h}}}{\partial S_{2}}\frac{\partial g}{\partial S_{1}}-\frac{\partial g_{\mathbf{h}}}{\partial S_{1}}\frac{\partial g}{\partial S_{2}}.
$$ 
Thus, $V$ is cut out by the system of equations
$$
\begin{cases}
0=X^{2n}+  g,\\
0=(X^{2}+TZ)^{n}+g_{\h},\\  
0=n(2Z-\overline{\lambda}(\boldsymbol{\mu}\cdot\mathbf{S}))(X^{2}+TZ)^{n-1}
+\frac{\partial g_{\mathbf{h}}}{\partial T},\\
0=
\frac{\partial g_{\mathbf{h}}}{\partial S_{2}}\frac{\partial g}{\partial S_{1}}-\frac{\partial g_{\mathbf{h}}}{\partial S_{1}}\frac{\partial g}{\partial S_{2}}.\end{cases}
$$
Moreover, it is clear that the polynomial $X^{2n}+g$ does not divide the polynomial in the last equation. Hence the 1st and 4th equation meet in a curve in $\PP_{\FF_q}^3$.
On the other hand, the polynomial $(x^{2}+tZ)^{n}+g_{\mathbf{h}}(s_{1},s_{2},t)$ is not constant in $Z$ when $t\neq 0$. 
This implies that any component of $V$ that is not contained in the hyperplane 
$T=0$ has  dimension $1$. Moreover, the intersection of $V$ with the hyperplane 
$T=0$ cuts out the the  system of equations
\[
\begin{cases}
0=X^{2n}+g,\\
0=X^{2n-2}(2Z -\overline{\lambda}(\boldsymbol{\mu}\cdot\mathbf{S})) +\mathbf{h}\cdot\nabla g(S_{1},S_{2})=0,\\
0=T,
\end{cases}
\]
which also has dimension $1$. Hence $\dim (V)=1$, as desired.

We now deal with the case where $f$ does not vanish identically. 
In view of 
\eqref{eq:venice}, 
if we multiply the second equation of 
\eqref{eq:napoli} by $U_1^{n}$ we obtain
\[
0=U_1^{n}\left((X^{2}+TZ)^{n}+  (X^{2}+TZ) f_{\mathbf{h}}+g_{\mathbf{h}}\right)=U_2^{n}+U_2U_1^{n-1}f_{\mathbf{h}}+U_1^{n}g_{\mathbf{h}}.
\]
It follows that the points of $V$ are solutions to the system
$$
\begin{cases}
0=X^{2n}+  X^{2} f(S_{1},S_{2})+g(S_{1},S_{2}),\\
0=U_2^{n}+U_2U_1^{n-1}f_{\mathbf{h}}+U_1^{n}g_{\mathbf{h}},\\
0=n(2Z-\overline{\lambda}(\boldsymbol{\mu}\cdot\mathbf{S}))(X^{2}+TZ)^{n-1}+(2Z-\overline{\lambda}(\boldsymbol{\mu}\cdot\mathbf{S}))f_{\h}
\\
\qquad +(X^{2}+TZ)\frac{\partial f_{\mathbf{h}}}{\partial T}+\frac{\partial g_{\mathbf{h}}}{\partial T},\\
U_2=(X^{2}+TZ)U_{1},
\end{cases}
$$
where $U_1,U_2\in \FF_q[S_1,S_2,T,X]$ are given by \eqref{eq:cornetto}.
We proceed by proving the following fact.

\begin{lem}\label{lem:teramo}
The polynomial $X^{2n}+X^{2}f+g$ does not divide the polynomial $U_2^{n}+U_2U_1^{n-1}f_{\mathbf{h}}+U_1^{n}g_{\mathbf{h}}$.
\end{lem}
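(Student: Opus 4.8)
The plan is to argue by contradiction. Suppose that $P := X^{2n} + X^2 f + g$ divides $Q := U_2^n + U_2 U_1^{n-1} f_{\mathbf{h}} + U_1^n g_{\mathbf{h}}$, where $U_1, U_2 \in \FF_q[S_1,S_2,T,X]$ are the polynomials defined in \eqref{eq:cornetto}. Since $P$ is a polynomial in $X^2$, $S_1$, $S_2$ alone (and does not involve $T$), while the various $f_{\mathbf{h}}$, $g_{\mathbf{h}}$, $\partial f_{\mathbf{h}}/\partial S_i$ do involve $T$, it is natural to work on the affine patch where $P$ defines a curve and to exploit that on $\{P = 0\}$ one has the relation $X^{2n} + X^2 f + g = 0$, i.e. $X^{2n} = -(X^2 f + g)$. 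The key structural observation I would use is the factorisation identity
\[
U_1^n\bigl((X^2+TZ)^n + (X^2+TZ)f_{\mathbf{h}} + g_{\mathbf{h}}\bigr) = U_2^n + U_2 U_1^{n-1} f_{\mathbf{h}} + U_1^n g_{\mathbf{h}} = Q,
\]
which holds \emph{modulo} the relation $U_2 = (X^2+TZ)U_1$ (equation \eqref{eq:venice}), but not as a polynomial identity. So $Q$ is built to vanish precisely where $U_1$ vanishes together with the second equation of \eqref{eq:napoli}, or where $U_2 = (X^2+TZ)U_1$ together with that second equation.

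The first step is therefore to understand the zero locus of $U_1$ on the curve $\{P = 0\}$. Writing $g$ and $f$ in terms of their roots on $\PP^1$ and using Lemma~\ref{lem:polf}, I would show that $U_1$, viewed as a polynomial in $S_1,S_2,T,X$, is \emph{not} divisible by $P$: indeed $U_1$ is, up to the factor $X^2$ and lower-order pieces, the Wronskian-type expression $\tfrac{\partial f_{\mathbf{h}}}{\partial S_2}\tfrac{\partial f}{\partial S_1} - \tfrac{\partial f_{\mathbf{h}}}{\partial S_1}\tfrac{\partial f}{\partial S_2}$ plus $X^2$ times a similar expression with $g$, and a degree count in $X$ (the polynomial $P$ has $X$-degree $2n$, whereas $U_1$ has $X$-degree at most $2$, and $U_1$ is not identically zero because $f$ is assumed non-zero and $\mathbf h \ne \mathbf 0$ or $\boldsymbol\mu \ne \mathbf 0$, etc.) rules out divisibility. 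Hence $\{U_1 = 0\} \cap \{P = 0\}$ is a proper subvariety of the curve $\{P = 0\}$, i.e. a finite set of points. I would then argue that at a \emph{generic} point of $\{P = 0\}$ — one where $U_1 \ne 0$ — we can solve for $Z$ from \eqref{eq:venice}, obtaining $Z = (U_2/U_1 - X^2)/T$ (on the patch $T \ne 0$), and then $Q$ vanishing at such a point forces $(X^2 + TZ)^n + (X^2+TZ)f_{\mathbf{h}} + g_{\mathbf{h}} = 0$ there. So if $P \mid Q$, then for all but finitely many points of $\{P=0\}$ the value $W := X^2 + TZ = U_2/U_1$ satisfies $W^n + W f_{\mathbf{h}} + g_{\mathbf{h}} = 0$.

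The heart of the argument, and what I expect to be the main obstacle, is to derive a contradiction from this. The cleanest route is a degree/parametrisation argument: restrict to the curve $C = \{P = 0\} \subset \PP(n,1,1)$ (variables $X, S_1, S_2$), which is smooth and irreducible by the hypothesis $p \nmid \Delta_{f,g}$, and regard $W = U_2/U_1$ as a rational function on $C \times \{T\text{-line}\}$. The relation $W^n + W f_{\mathbf{h}}(S_1,S_2,T) + g_{\mathbf{h}}(S_1,S_2,T) = 0$ says that $W$ is a root of the polynomial $\Phi(\,\cdot\,, S_1, S_2, T)$ whose other incarnation on $C$ is $X^{2} \mapsto$ related data via the original equation $X^{2n} = -(X^2 f + g)$; comparing the discriminant of $\Phi$ in the $W$-variable (which is, up to units and powers, the discriminant quantity appearing in the smoothness condition $n^n g^{n-1} = (n-1)^{n-1}(-f)^n$ recorded after Theorem~\ref{thm:dp1}) against the fact that $p \nmid \Delta_{f,g}$ forces $\Phi$ to be separable in $W$ over the function field of $C$, and a count of how many points of $C$ can map to a given $(S_1,S_2,T)$-fibre contradicts $P \mid Q$ unless $f_{\mathbf{h}}, g_{\mathbf{h}}$ coincide with $f, g$ — which they do not, since $\mathbf{h} \ne \mathbf{0}$ is forced here (the case $\mathbf h = \mathbf 0$, $\boldsymbol\mu \ne \mathbf 0$ having been disposed of, and $f$ non-zero). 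I would make this precise by reducing everything to a one-variable statement: specialise $(S_1 : S_2)$ to a generic point so that $g, f, g_{\mathbf h}, f_{\mathbf h}$ become specific polynomials in $T$ and $X$, and then invoke that $X^{2n} + X^2 f + g$ and $U_2^n + U_2 U_1^{n-1}f_{\mathbf h} + U_1^n g_{\mathbf h}$ have no common factor by an explicit resultant computation whose non-vanishing is again governed by $\Delta_{f,g}$ and the separability of $g$. The subtlety to watch is the interplay of the $X$-degree parity (everything is a polynomial in $X^2$, so $P$ is really a polynomial of degree $n$ in $X^2$) with the shifted forms $f_{\mathbf h}, g_{\mathbf h}$; Lemma~\ref{lem:polf}(ii), guaranteeing that $g_{\mathbf h}(S_1,S_2,T)$ is separable in $T$ of the expected degree, is exactly the input that makes the resultant non-degenerate.
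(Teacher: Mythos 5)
Your proposal heads off in a different direction from the paper's proof, and the key step is left as a gesture rather than an argument. The paper's proof works by specialising at $X=0$ first: if $P\mid Q$ then, since $P(0,S_1,S_2)=g$, the binary form $g$ must divide $Q(0,S_1,S_2,T)$ in $\FF_q[S_1,S_2,T]$; one then picks a single root $[s_1,s_2]$ of $g$ (chosen via Lemma~\ref{lem : polf}(i) so that $\nabla g(s_1,s_2)$ is not proportional to $\nabla g(h_1,h_2)$) and reduces to one-variable polynomials in $T$. There it becomes a short divisibility chain: $U_1(s_1,s_2,T,0)\mid U_2(s_1,s_2,T,0)$ forces $U_2=U_1\cdot W$ with $\deg W\geq 2$, and then $W\mid g_{\h}(s_1,s_2,T)$ together with the separability of $g_{\h}(s_1,s_2,T)$ in $T$ (Lemma~\ref{lem : polf}(ii)) produces a second root of $g$ whose gradient is proportional to that at $[s_1,s_2]$, contradicting Lemma~\ref{lem : polf}(i). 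This is completely explicit and never invokes discriminants of the surface $S$.

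Your proposal contains several genuine gaps.
First, the dimension count is off: $\{P=0\}$ lives in the variables $X,S_1,S_2,T$, so it is a \emph{surface}, not a curve; the ``curve'' $C\subset\PP(n,1,1)$ times the $T$-line is two-dimensional, and $\{U_1=0\}\cap(C\times\AA^1_T)$ need not be finite. The correct statement is only that $U_1$ does not vanish identically on $\{P=0\}$, which you do argue correctly by degree considerations in $X$.
Second, the variable $Z$ plays no role in the statement: $Q$ is a polynomial in $X,S_1,S_2,T$ only, so ``solving for $Z$ from \eqref{eq:venice}'' is not an operation internal to this lemma, and introducing $W=X^2+TZ$ as a rational function is not the same thing as the formal identity $U_1^n\cdot(\text{2nd eqn of \eqref{eq:napoli}})=Q$ modulo $U_2-(X^2+TZ)U_1$; you must show directly that $P\nmid Q$ in $\FF_q[X,S_1,S_2,T]$.
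Third, and most importantly, the heart of your proposal --- ``comparing the discriminant of $\Phi$ in the $W$-variable \ldots against $p\nmid\Delta_{f,g}$ \ldots a count of how many points of $C$ can map to a given fibre contradicts $P\mid Q$ unless $f_{\h},g_{\h}$ coincide with $f,g$'' --- is never carried out, and as stated it is not correct: the conclusion $P\nmid Q$ continues to hold even when $\h=\0$ (where $f_{\h}=f$ and $g_{\h}=g$), as one sees by computing $U_2=-X^2U_1$ and $Q=U_1^n(-P+2g)$ directly. The discriminant condition governing smoothness of $S$ involves $f,g$ jointly in a different way than your $\Phi(W)=W^n+Wf_{\h}+g_{\h}$; I do not see how $p\nmid\Delta_{f,g}$ feeds into separability of $\Phi$ in $W$, and no resultant is actually computed. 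To close the proof you would need to replace this paragraph by a concrete specialisation argument in the spirit of the paper's $X=0$ reduction; a ``generic $(S_1:S_2)$'' specialisation, as you suggest instead, makes $P$ a $T$-free polynomial in $X$ alone and leaves you needing to rule out $Q=c\cdot P$ with $c\in\FF_q[S_1,S_2,T]$, which is not obviously simpler than the original claim.
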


\begin{proof}
We can assume that $g$ is separable over $\FF_q$ since $p\nmid \Delta_{f,g}$.  
We argue by  contradiction,  by assuming that 
 $X^{2n}+X^{2}f+g$  divides the polynomial $U_2^{n}+U_2U_1^{n-1}f_{\mathbf{h}}+U_1^{n}g_{\mathbf{h}}$.
Taking  $X=0$, this  implies that the  polynomial $g$  divides
\[
\begin{split}
F&=U_2(S_{1},S_{2},T,0)^{n}+U_2(S_{1},S_{2},T,0)U_1(S_{1},S_{2},T,0)^{n-1}f_{\mathbf{h}}(S_{1},S_{2},T)\\&\quad +U_1(S_{1},S_{2},T,0)^{n}g_{\mathbf{h}}(S_{1},S_{2},T).
\end{split}
\]
Choose  $[s_{1},s_{2}]\in \PP_{\FF_q}^1$ such that 
$g(s_1,s_2)=0$, with  
$\nabla g(s_{1},s_{2})$ not proportional to 
$\nabla g(h_{1},h_{2})$.
This is possible by part (i) of Lemma \ref{lem : polf},  which shows that 
$\nabla g(h_{1},h_{2})$ can be proportional  to at most one of the vectors $\nabla g(s_{1},s_{2})$. It also follows  from   
part (i) of Lemma \ref{lem : polf} that 
 $[h_{1},h_{2}]\neq [s_{1},s_{2}]$.
For this choice of $s_1,s_2$, the polynomial
$U_2(s_{1},s_{2},T,0)$ has degree  $2n-1$, with non-zero leading  coefficient 
\begin{align*}
\frac{\partial g}{\partial S_{2}}(h_{1},h_{2})\frac{\partial g}{\partial S_{1}}(s_{1},s_{2})-\frac{\partial g}{\partial S_{1}}(h_{1},h_{2})\frac{\partial g}{\partial S_{2}}(s_{1},s_{2}).
\end{align*}
Since $g\mid F$ we get
\begin{align*}
0\equiv~&U_1(s_{1},s_{2},T,0)^{n}g_{\mathbf{h}}(s_{1},s_{2},T)+U_2(s_{1},s_{2},T,0)^{n}\\
&\quad +U_2(s_{1},s_{2},T,0)U_1(s_{1},s_{2},T,0)^{n-1}f_{\mathbf{h}}(s_{1},s_{2},T),
\end{align*}
identically in $T$. In particular it follows that  $U_1(s_{1},s_{2},T,0) \mid U_2(s_{1},s_{2},T,0)$, so that  there exists $W\in\overline{\mathbb{F}}_{q}[T]$ such that
\[
U_2(s_{1},s_{2},T,0)=U_1(s_{1},s_{2},T,0)\cdot W(T).
\]
Moreover, since $U_2(s_{1},s_{2},T,0)$ has degree $2n-1$ and $U_1(s_1,s_2,T,0)$ has degree at most $2n-3$, we conclude that  $\deg W\geq 2$. Thus we have 
\[
g_{\mathbf{h}}(s_{1},s_{2},T)=W(T)^n+W(T)\cdot f_{\mathbf{h}}(s_{1},s_{2},T),
\]
identically in $T$. This implies that $W(T)|g_{\mathbf{h}}(s_{1},s_{2},T)$. On the other hand, $g_{\mathbf{h}}(s_{1},s_{2},T)=g(s_{1}+h_{1}T,s_{2}+h_{2}T)$ is a separable polynomial by part 
(ii) of Lemma \ref{lem : polf}, with degree at least $2n-1$. 
Thus $W$ is a separable polynomial of degree $\geq 2$. It  follows that there exists $t\neq 0$ such that
\[
U_2(s_{1},s_{2},t,0)=g(s_{1}+h_{1}t,s_{2}+h_{2}t)=0.
\]
If $s_{1}+h_{1}t=s_{2}+h_{2}t=0$ then $s_{1}h_{2}-s_{2}h_{1}=0$, which implies that $[h_{1},h_{2}]=[s_{1},s_{2}]$  in $\PP_{\FF_q}^1$. This is impossible, by our construction of $[s_{1},s_{2}]$.

We now put  $\tilde{s}_{1}=s_{1}+h_{1}t$ and $\tilde{s}_{2}=s_{2}+h_{2}t$. 
We have already seen that $(\tilde{s}_1,\tilde{s}_2)\neq (0,0)$.
Moreover, by part (ii) of Lemma \ref{lem : polf}, 
$[s_{1},s_{2}]\neq [\tilde{s}_{1},\tilde{s}_{2}]$, since $t\neq 0$.
Hence  there exists $[\tilde{s}_{1},\tilde{s}_{2}]\in \PP_{\FF_q}^1$,  which is a root of $g$ distinct from 
$[s_{1},s_{2}]$, such that
\[
\frac{\partial g}{\partial S_{2}}(\tilde{s}_{1},\tilde{s}_{2})\frac{\partial g}{\partial S_{1}}(s_{1},s_{2})-\frac{\partial g}{\partial S_{1}}(\tilde{s}_{1},\tilde{s}_{2})\frac{\partial g}{\partial S_{2}}(s_{1},s_{2})=0.
\]
This  contradicts part (i) of Lemma \ref{lem : polf}, which thereby completes the proof.
\end{proof}

It follows from Lemma \ref{lem:teramo} that  the system
\[
\begin{cases}
0=X^{2n}+X^{2}f+g,\\
0=U_2^{n}+U_2U_1^{n-1}f_{\mathbf{h}}+U_1^{n}g_{\h},
\end{cases}
\]
defines a variety of dimension $1$ in $\PP_{\FF_q}^{3}$. On the other hand, for any $s_1,s_2,x,t\in \overline\FF_q$, the polynomial
\[
\begin{split}
&n(2Z-\overline{\lambda}(\boldsymbol{\mu}\cdot\mathbf{s}))(x^{2}+tZ)^{2}+(2Z-\overline{\lambda}(\boldsymbol{\mu}\cdot\mathbf{s}))f_{\mathbf{h}}(s_{1},s_{2},t)\\&\quad +(x^{2}+tZ)\frac{\partial f_{\mathbf{h}}}{\partial T}(s_{1},s_{2},t)+\frac{\partial g_{\mathbf{h}}}{\partial T}(s_{1},s_{2},t)
\end{split}
\]
has degree $0$ in $Z$ if  and only if $t=f(s_{1},s_{2})=0$.
It follows that any components of $V$ that are not contained in the intersection of 
$T=0$ with $f(S_1,S_2)=0$ have dimension $1$.  On the other hand, the intersection of $V$ with the variety $T=f(S_1,S_2)=0$ has dimension $1$, since $X$ is constrained by the equation
$X^{2n}+g(S_1,S_2)=0$.
This finally  completes the proof of Proposition \ref{prop : dim1}.

\appendix 

\section{Hooley's method of moments for exponential sums} 

Our work uses a  general procedure due to Hooley
\cite{hooleysum}, which allows one to estimate a very general family of exponential sums over a finite field, provided that one can control the second moment of an appropriate counting function. 

\begin{thm}[Hooley]
Let $F, G_{1},\dots,G_{k}\in \ZZ[X_1,\dots,X_m]$ be polynomials of degree at most $d$ and let
\[
S=\sum_{\substack{x\in\mathbb{F}_{p}^{m}\\G_{1}(\mathbf{x})=\cdots = G_{k}(\mathbf{x})=0}}e_{p}(F(\mathbf{x})).
\]
For each $r\geq 1$ and $\tau\in\mathbb{F}_{p^{r}}$, write
\begin{equation}\label{eq:Nr}
N_{r}(\tau)=\#\left\{\mathbf{x}\in\mathbb{F}_{p^{r}}^{m}: G_{1}(\mathbf{x})=\dots = G_{k}(\mathbf{x})=0, F(\mathbf{x})=\tau\right\}. 
\end{equation}
If there exist $N_{r}\in\mathbb{R}$ such that
\[
\sum_{\tau\in\mathbb{F}_{p^{r}}}|N_{r}(\tau)-N_{r}|^{2}\ll_{d,k,m} p^{\kappa r},
\]
where $\kappa\in\mathbb{Z}$ is independent of $r$, then $S\ll_{d,k,m} p^{\kappa/2}$.
\label{thm:hooley}
\end{thm}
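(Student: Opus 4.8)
The plan is to expand $S$ along the fibres of the map $\mathbf{x}\mapsto F(\mathbf{x})$ on the variety $G_1=\dots=G_k=0$, so that $S=\sum_{\tau\in\FF_p}N_1(\tau)e_p(\tau)$. Since $\sum_{\tau\in\FF_p}e_p(\tau)=0$, I may subtract any constant from $N_1(\tau)$ without changing $S$; in particular $S=\sum_{\tau\in\FF_p}(N_1(\tau)-N_1)e_p(\tau)$ for the given real number $N_1$. The natural first instinct is Cauchy--Schwarz, which gives $|S|^2\le p\sum_\tau|N_1(\tau)-N_1|^2\ll p^{1+\kappa}$, i.e. $S\ll p^{(1+\kappa)/2}$. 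This is off by a factor $p^{1/2}$ from the claimed bound, so the point of the argument must be to exploit the hypothesis at every level $r$, not just $r=1$, via a moment/tensor-power trick.

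The key device is to pass to powers. For each $r\ge 1$ consider the analogous sum $S_r=\sum_{\mathbf{x}\in\FF_{p^r}^m,\ G_i(\mathbf{x})=0}\psi_r(F(\mathbf{x}))$, where $\psi_r=e_p\circ\tr_{\FF_{p^r}/\FF_p}$ is the canonical additive character of $\FF_{p^r}$; then exactly as above $S_r=\sum_{\tau\in\FF_{p^r}}(N_r(\tau)-N_r)\psi_r(\tau)$, and Cauchy--Schwarz together with the hypothesis yields
\[
|S_r|\le \Big(p^r\sum_{\tau\in\FF_{p^r}}|N_r(\tau)-N_r|^2\Big)^{1/2}\ll p^{(1+\kappa)r/2}.
\]
Now I would relate $S_r$ to $S$. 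The standard fact is that the $S_r$ are the power sums of a finite multiset of complex numbers: there exist algebraic numbers $\alpha_1,\dots,\alpha_A,\beta_1,\dots,\beta_B$ (Frobenius eigenvalues on suitable $\ell$-adic cohomology, with $A+B$ bounded in terms of $d,k,m$ by Bombieri's or Katz's bounds on Betti numbers of the relevant varieties and sheaves) such that $S_r=\sum_j\alpha_j^r-\sum_j\beta_j^r$ for all $r\ge 1$, with $S=S_1$. The bound $|S_r|\ll p^{(1+\kappa)r/2}$ for \emph{all} $r$, fed into a standard argument (e.g. taking $r\to\infty$ along a suitable sequence, or comparing generating functions), forces every $|\alpha_j|$ and $|\beta_j|$ to be $\le p^{(1+\kappa)/2}$\,---\,but that only recovers the Cauchy--Schwarz bound again. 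To gain the extra $\sqrt p$ one instead uses that $S$ is an \emph{integer} combination of roots of unity times powers of $p$: more precisely, I would invoke the purity/weight structure, namely that each $\alpha_j$ has $|\alpha_j|=p^{w_j/2}$ for an integer weight $w_j$. Then $|S_r|\ll p^{(1+\kappa)r/2}$ for all $r$ forces $w_j\le \kappa$ for every $j$ (an eigenvalue of weight $\kappa+1$ or more would eventually dominate and violate the bound), hence $|\alpha_j|\le p^{\kappa/2}$, and therefore
\[
|S|=|S_1|\le (A+B)\,p^{\kappa/2}\ll_{d,k,m} p^{\kappa/2},
\]
which is the assertion.

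\textbf{Main obstacle.} The delicate point is the passage from "$|S_r|\ll p^{(1+\kappa)r/2}$ for all $r$" to "$|S_1|\ll p^{\kappa/2}$": a naive Cauchy--Schwarz at $r=1$ loses a factor $p^{1/2}$, and recovering it genuinely requires the arithmetic-geometric input that the Frobenius eigenvalues governing $S_r$ have half-integer weights bounded by the number of available cohomological degrees, so that the crude size bound on power sums can be upgraded to an eigenvalue-by-eigenvalue bound with \emph{integer} weight $\le\kappa$. Equivalently, one shows that $p^{-\kappa/2}S$ stays bounded as one runs over a zero-density-free family of finite fields, using that it is a sum of boundedly many algebraic numbers each of absolute value $p^{(w_j-\kappa)/2}$ with $w_j\in\ZZ$; any $w_j>\kappa$ would be detected in the second moment at large $r$. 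Making this rigorous is where one appeals to Hooley's original treatment in \cite{hooleysum}, which carries out exactly this eigenvalue bookkeeping; the rest of the proof is the elementary expansion and Cauchy--Schwarz recorded above.
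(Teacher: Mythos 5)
Your proposal correctly identifies the structural ingredients---expanding $S_r$ in terms of $N_r(\tau)-N_r$, rationality of the associated zeta function, and Deligne's integer-weight purity for the Frobenius eigenvalues $\alpha_j$---but the closing step as you have written it is false, and the gap lies exactly where the real work happens. You claim that $|S_r|\ll p^{(1+\kappa)r/2}$ for all $r$, combined with $|\alpha_j|=p^{w_j/2}$ for integers $w_j$, forces $w_j\leq\kappa$, because ``an eigenvalue of weight $\kappa+1$ or more would eventually dominate and violate the bound.'' It would not: an eigenvalue of weight exactly $\kappa+1$ contributes $|\alpha_j^r|=p^{(\kappa+1)r/2}$, which matches your stated upper bound on the nose and therefore never violates it. All one can deduce from the single sequence $|S_r|\ll p^{(1+\kappa)r/2}$ together with integer weights is $w_j\leq\kappa+1$, i.e.\ exactly the Cauchy--Schwarz bound $|S|\ll p^{(\kappa+1)/2}$ that you correctly flagged as insufficient. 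Your later remark that ``any $w_j>\kappa$ would be detected in the second moment at large $r$'' is the right intuition, but you never set up a mechanism that realizes it.

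The missing idea is a second averaging, over the additive character. One considers the whole family $S_r(\mu)=\sum_{\mathbf{x}}\psi(\mu F(\mathbf{x}))$ for $\mu\in\FF_q^{\times}$, with $\psi$ a fixed nontrivial character of $\FF_q$; by orthogonality
\[
M_r:=\sum_{\mu\in\FF_q^{\times}}|S_r(\mu)|^2\leq q\sum_{\tau\in\FF_q}|N_r(\tau)-N_r|^2\ll q^{\kappa+1}.
\]
For $\mu\in\FF_p^{\times}$ the sum $S_r(\mu)$ is a Galois conjugate of $S_r(1)$, which is why the paper invokes Deligne's bound in the strong form $|\sigma(\alpha_j)|\leq p^{m_j/2}$ for all automorphisms $\sigma$: the integer weights $m_j$, and hence the top weight $H$ and the set $J$ of top-weight indices, are \emph{independent of} $\mu\in\FF_p^{\times}$. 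Choosing $r$ so that the corresponding roots of unity satisfy $\omega_j^r=1$ simultaneously for all $j\in J$ and all $\mu\in\FF_p^{\times}$ gives $|S_r(\mu)|^2\geq (\#J)^2 q^{H}\bigl(1+O(q^{-1/2})\bigr)$ uniformly in $\mu$, so that $M_r\gg (p-1)(\#J)^2 q^{H}$. Comparing with $M_r\ll q^{\kappa+1}$, the factor $p-1$ is precisely what rules out $H\geq\kappa+1$ for $p\gg_{d,k,m}1$ (the remaining primes being handled trivially). In short: you located the obstruction correctly, but the mechanism you propose---purity plus the single-character sequence $|S_r|$---cannot distinguish weight $\kappa$ from weight $\kappa+1$; the twist by $\mu$ and the resulting moment in $\mu$ are indispensable.
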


This result relies crucially on Deligne's resolution of the Weil conjectures and can be extracted from 
work of  Hooley \cite{hooleysum}. 
At the recommendation of one of the anonymous referees we will give a full proof of Theorem \ref{thm:hooley} in this appendix. 

Let $\psi:\FF_q\to \CC$ be any non-trivial additive character, 
where $q=p^r$. 
Define 
$$
S_r(\mu)=\sum_{\substack{x\in\mathbb{F}_{q}^{m}\\G_{1}(\mathbf{x})=\cdots = G_{k}(\mathbf{x})=0}}
\psi(\mu F(\mathbf{x})),
$$
for any  $\mu\in \FF_q^*$. We clearly have  $S=S_1(\mu)$, for a suitable $\mu\in \FF_p^*$.
The idea is to study the moment
$$
M_r=\sum_{\mu\in \FF_q^*} |S_r(\mu)|^2.
$$
Recall the definition 
 \eqref{eq:Nr} of  $N_r(\tau)$. Then clearly
$$
S_r(\mu)=
\sum_{\tau\in \FF_q} N_r(\tau) \psi(\mu\tau)=
\sum_{\tau\in \FF_q}\left(N_r(\tau)-N_r\right)\psi(\mu\tau),
$$
for any $N_r\in \RR$. It follows from orthogonality of characters that 
$$
M_r\leq \sum_{\mu\in \FF_q} |S_r(\mu)|^2= q\sum_{\tau\in \FF_q} |N_r(\tau)-N_r|^2.
$$
By hypothesis, there exists $N_r\in\RR$ such that 
\begin{equation}\label{eq:step1}
M_r  \ll_{d,k,m} q^{\kappa +1},
\end{equation}
for some $\kappa\in \ZZ$ that is independent of $r$.

Now fix $\mu\in \FF_p^*$. Associated to $S_r(\mu)$
is a zeta function whose rationality is assured by the work of Dwork \cite{dwork}.
Thus there exists numbers 
$\alpha_1,\dots,\alpha_N\in \CC$ such that 
\begin{equation}\label{eq:dwork}
S_r(\mu)=\sum_{1\leq j\leq n}\alpha_j^r-\sum_{n<j\leq N} \alpha_j^r,
\end{equation}
where $N=O_{d,m,k}(1)$. Furthermore, it follows from Deligne's resolution of the Weil conjectures \cite{weil1} that 
there exist integers $m_1,\dots,m_N\geq 0$ such that 
$$
|\sigma(\alpha_j)|\leq p^{m_j/2}, \quad (1\leq j\leq N)
$$
for any automorphism $\sigma$ of $\bar \QQ$.
We claim that the integers 
 $m_j$ are independent of $\mu$, for $\mu\in \FF_p^*$.  
Let $\beta_1,\dots,\beta_N\in \CC$ be such that 
$S_r(1)=\sum_{1\leq j\leq n}\beta_j^r-\sum_{n<j\leq N} \beta_j^r$ and let 
$\sigma:\bar\QQ\to \bar\QQ$ be the automorphism that takes $\psi(1)$ to $\psi(\mu)$.
Then $S_r(\mu)=\sigma(S_r(1))$, whence $\alpha_j=\sigma(\beta_j)$ for $1\leq j\leq N$.
The claim readily follows.

Let $H=\max_{1\leq j\leq N} m_j$ and let $J=\{j\leq N: m_j=H \}$. We shall prove that 
$J $ is empty when $H\geq \kappa+1$. From this it will follow that 
$|\alpha_j|\leq q^{\kappa}$ for each $1\leq j\leq N$, whence
$$
S_r(\mu)\ll_{d,m,k} q^{\kappa/2},
$$
which is satisfactory for Theorem \ref{thm:hooley}.
Assume that $H\geq \kappa+1$. For each 
 $j\in J$, we write
$\alpha_j=\omega_j p^{H/2}$ for a root of unity $\omega_j$ depending on $\mu$. 
Then it follows from \eqref{eq:dwork} that
$$
S_r(\mu)=q^{H/2} \sum_{j\in J} \omega_j^r+O_{d,m,k}(q^{(H-1)/2}).
$$
But then 
$$
q^{-H} |S_r(\mu)|^2=\left|\sum_{j\in J} \omega_j^r\right|^2 +O_{d,m,k}(q^{-1/2}).
$$
Hence 
$$
q^{-H}M_r\geq q^{-H} \sum_{\mu\in \FF_p^*}|S_r(\mu)|^2 =
\sum_{\mu\in \FF_p^*} \left|\sum_{j\in J} \omega_j^r\right|^2 +O_{d,m,k}(pq^{-1/2}).
$$
Combining this with \eqref{eq:step1}, we deduce that 
$$
\sum_{\mu\in \FF_p^*} \left|\sum_{j\in J} \omega_j^r\right|^2 \ll_{d,m,k} p^{1-r/2} +1.
$$
For each $j\in J$, suppose that $\omega_j=\exp(2\pi i a_j/q_j)$ for appropriate coprime integers $a_j$ and $q_j$. Choose $r\in \ZZ$ such that  $r\geq 2$ and $q_j\mid r$, for $j\in J$. Then it will follow that 
$\omega_j^r=1$, for each $j\in J$, whence
$$
(p-1)\#J^2=O_{d,m,k}(1).
$$
This implies that $J$ is emtpy, as claimed, provided that $p\gg_{d,m,k} 1$.


\begin{thebibliography}{99}

\bibitem
{287}
M. Bhargava, A. Shankar, T. Taniguchi, F. Thorne, J. Tsimerman and Y. Zhao,
 Bounds on $2$-torsion in class groups of number fields and integral points on elliptic curves.
{\em  J. Amer. Math. Soc.}
{\bf 33} (2020), 1087--1099. 


\bibitem{BP}
E. Bombieri and J. Pila, The number of integral points on 
arcs and ovals. {\em Duke Math. J.} {\bf 59} (1989), 337--357.


\bibitem{weil1}
P. Deligne, 
La conjecture de Weil. I.
{\em Publ. Math. IH\'ES}  {\bf 43} (1974), 273--307.


\bibitem{dwork}
B. Dwork, 
On the rationality of the zeta function of an algebraic variety.
{\em American J. Math.}
{\bf 82}   (1960), 631--648.

\bibitem{f-m-t}
J. Franke, Y.I. Manin and Y. Tschinkel,
Rational points of bounded height on {F}ano varieties.
{\em Invent.\ Math.\ } {\bf 95} (1989), 421--435.

\bibitem{fulton}
W. Fulton, {\em Intersection theory.} 2nd ed.,
  Springer-Verlag, 1998. 


\bibitem{GKZ} 
I. Gelfand, M. Kapranov and A. Zelevinsky, 
{\em Discriminants, resultants, and multidimensional determinants}.
Birkh\"auser, 1994.


\bibitem{hb}
D.R. Heath-Brown, 
Counting rational points on cubic surfaces. 
{\em Ast\'erisque} {\bf 251} (1998), 13--30.


\bibitem{annals}
D.R. Heath-Brown,
The density of rational points on curves and surfaces. {\em
Annals of Math.} {\bf 155} (2002), 553--595.


\bibitem{HP12}
D.R. Heath-Brown and  L. Pierce,
Counting rational points on smooth cyclic covers.
{\em J. Number Theory}. {\bf 132} (2012), 1741--1757.

\bibitem{helf'}
H.A.  Helfgott,
 Power-free values, large deviations and integer points on irrational curves.  {\em J. Th\'eor. Nombres Bordeaux} {\bf 19} (2007), 433--472.

\bibitem{helf}
H.A.  Helfgott and A. Venkatesh,
Integral points on elliptic curves and 3-torsion in class groups.
{\em J. Amer. Math. Soc.}
{\bf 19 } (2006), 527--550.

\bibitem{hooleysum}
C. Hooley, On exponential sums and certain of their 
applications.  {\em Journ\'ees Arithm\'etiques (1980)},  92--122,
LMS Lecture Notes  {\bf 56}, Cambridge Univ. Press, 
1982.


\bibitem{HK91}
C. Hooley,
On the number of points on a complete intersection over a finite field.
{\em J. Number Theory} {\bf 38} (1991), 338--358.


 \bibitem{Kat99}
 N. Katz,
Estimates for ``singular'' exponential sums.
{\em Int. Math. Res. Not.} {\bf 16} (1999), 875--899.


\bibitem{manin} 
Y.I. Manin, {\em Cubic forms}. 2nd ed.,
    North-Holland Mathematical Library {\bf 4}, North-Holland Publishing Co.,  1986.



\bibitem{costa}
D. Mendes da Costa,
Integral points on elliptic curves and the Bombieri--Pila bounds. {\em Preprint}, 2013
(\texttt{arXiv:1301.4116})


 \bibitem{Pie06}
 L. Pierce,
A bound for the 3-part of class numbers of quadratic fields by means of the square sieve.
{\em Forum Math.}  {\bf 18}  (2006), 677--698.

 \bibitem{Sch76} 
 W. Schmidt,
\emph{Equations over Finite Fields: An Elementary Approach.}
Lecture Notes in Mathematics {\bf 536},   Springer-Verlag, Berlin, 1976.



\end{thebibliography}
\end{document}